\pgfplotsset{width=7cm,compat=1.8}
\numberwithin{equation}{section}
\title[Sharp polynomial decay for singular damping]{Sharp polynomial decay for polynomially singular damping on the torus}
\author{Perry Kleinhenz}
\address{Department of Mathematics, Michigan State University, East Lansing, MI 48824, USA}
\email[P.~Kleinhenz]{kleinh29@msu.edu}
\author{Ruoyu P. T. Wang}
\address{Department of Mathematics, Northwestern University, Evanston, IL 60208, USA}
\email[R.~P.~T.~Wang]{rptwang@math.northwestern.edu}
\keywords{damped waves, singular damping, backward uniqueness, Schr\"odinger observability}
\newcommand{\Rb}{\mathbb{R}}
\newcommand{\C}{\mathbb{C}}
\newcommand{\Dc}{\mathcal{D}}
\newcommand{\Nb}{\mathbb{N}}
\newcommand{\essinf}{\mathop {\rm ess\,inf}}
\newcommand{\ra}{\rightarrow}
\renewcommand{\>}{\right\rangle}
\newcommand{\e}{\varepsilon}
\renewcommand{\d}{\delta}
\newcommand{\nm}[1]{\left\| #1 \right\|}
\newcommand{\lp}[2]{ \nm{#1}_{L^{#2}}}
\newcommand{\hp}[2]{\nm{#1}_{H^{#2}}}
\newcommand{\ltwo}[1]{\lp{#1}{2}}
\newcommand{\Cs}{C^{\infty}_0}
\newcommand{\p}{\partial}
\renewcommand{\supp}{\text{supp }}
\newcommand{\T}{\mathbb{T}}
\newcommand{\Ac}{\mathcal{A}}
\newcommand{\Lc}{\mathcal{L}}
\newcommand{\Hc}{\mathcal{H}}
\newcommand{\ti}{\widetilde}
\newcommand{\Sb}{\mathbb{S}}
\renewcommand{\Re}{\text{Re }}
\newcommand{\abs}[1]{\left|#1\right|}
\newcommand{\cim}{\operatorname{Im}}
\newcommand{\cre}{\operatorname{Re}}
\newcommand{\id}{\operatorname{Id}}
\newcommand{\Acd}{\dot{\Ac}}
\newcommand{\Hcd}{\dot{\Hc}}
\renewcommand{\ker}{\operatorname{Ker}}
\newcommand{\Pl}{P_{\lambda}}
\newcommand{\Pli}{P_{\lambda}^{-1}}
\renewcommand{\Dc}{\mathcal{D}}
\newcommand{\Dcd}{\dot{\Dc}}
\newcommand{\spec}{\operatorname{Spec}}
\newcommand{\bigo}{\mathcal{O}}
\renewcommand{\sgn}{\operatorname{sgn}}
\newcommand{\spa}{\operatorname{span}}
\newcommand{\av}{\operatorname{Av}}
\begin{document}

\begin{abstract}
We study energy decay rates for the damped wave equation with unbounded damping, without the geometric control condition. Our main decay result is sharp polynomial energy decay for polynomially controlled singular damping on the torus. We also prove that for normally $L^p$-damping on compact manifolds, the Schr\"odinger observability gives $p$-dependent polynomial decay, and finite time extinction cannot occur. We show that polynomially controlled singular damping on the circle gives exponential decay. 
\end{abstract}
\maketitle

\section{Introduction}
\subsection{Introduction}
In this paper we study the damped wave equation. Let $(M,g)$ be a compact Riemannian manifold without boundary and let $W$ be a non-negative measurable function on $M$. Then the viscous damped wave equation is
\begin{equation}\label{DWE}
\begin{cases}
(\p_t^2 -\Delta + W \p_t ) u =0, \text{ in } M\\
(u, \p_t u)|_{t=0} = (u_0, u_1) \in \Dc\subset H^1(M)\times L^2(M), 
\end{cases}
\end{equation}
where the set of admissible initial data $\Dc$ is specified later in Definition \ref{1l4}. The primary object of study in this paper is the energy 
\begin{equation}
E(u,t) = \frac{1}{2} \int \abs{\nabla u}^2 +  \abs{\p_t u}^2 \ dx.
\end{equation}
When $W$ is continuous, it is classical that uniform stabilization is equivalent to geometric control by the positive set of the damping. That is, there exists $r(t) \ra 0$ as $t \ra \infty$ such that
\begin{equation}
E(u,t) \leq C r(t) E(u,0),
\end{equation}
if and only if there exists $L$, such that all geodesics of length at least $L$ intersect $\{W>0\}$. In this case, due to the semigroup property of solutions, one can take $r(t)=\exp({-ct})$, for some $c>0$.  

When the geometric control condition is not satisfied, $E(u,0)$ must be replaced on the right hand side. Decay rates are of the form
\begin{equation}\label{e:endec}
E(u,t)^{1/2} \leq C r(t) \left(\hp{u_0}{2} + \hp{u_1}{1}  \right).
\end{equation}
Furthermore, the optimal $r(t)$ depends not only on the geometry of $M$ and $\{W>0\}$, but also on properties of $W$ near $\partial\{W>0\}$. In general, for bounded $W$ the more singular $W$ is near $\p \{W>0\}$, the slower the sharp energy decay rate. Since unbounded damping allows for even more singular behavior it is natural to see if this relationship continues. 

For much of this paper we focus on $M=\T^2$, which has polynomial rates $r(t)=\langle t\rangle^{-\alpha}$ with $\alpha>0$. In particular, when $W \in L^{\infty}(M)$, is positive on a positive measure set, but $\supp W$ does not satisfy the geometric control condition, \cite{aln14} showed that \eqref{e:endec} holds with $r(t)=\langle t\rangle^{-\frac{1}{2}}$ and cannot hold with $r(t)=\langle t \rangle^{-1-\e}$.  Although there are damping functions which saturate the $\langle t\rangle^{-1}$ rate, there are no examples of damping for that the waves must decay at $\langle t\rangle^{-\frac{1}{2}}$. The closest rate was proved by Nonnenmacher in an appendix to \cite{aln14}: for damping equal to the characteristic function of a strip, there are solutions decaying no faster than $\langle t\rangle^{-\frac{2}{3}}$. This leaves a mysterious gap between $\langle t\rangle^{-\frac{1}{2}}$ and $\langle t\rangle^{-\frac{2}{3}}$ where no known bounded damping gives such a rate.

In this paper, we aim to address two open problems formulated in \cite{aln14}: 
\begin{question}\label{p1}
Is the a priori upper bound $\langle t\rangle^{-\frac{1}{2}}$ for the rates optimal?
\end{question}
\begin{question}\label{p2}
How is the vanishing rate of $W$ related to the energy decay rate?
\end{question}
In order to do this, we no longer assume $W\in L^{\infty}$ and consider unbounded damping. 

\subsection{Main results}
Now we list our main results in this paper. First we show that on $M=\mathbb{T}^2$, for $\beta\in (-1,\infty),$ $y$-invariant damping functions vanishing like $d(\cdot, \{W=0\})^{\beta}$ near $\partial\{W=0\}$ produce sharp $\langle t\rangle^{-\frac{\beta+2}{\beta+3}}$-decay. This interpolates the decay rates between $\langle t\rangle^{-\frac{1}{2}}$ and $\langle t\rangle^{-1}$. The possibly unbounded polynomially controlled damping functions in $X_\theta^\beta$ and the space $\mathcal{D}$ of admissible initial data are specified in Definitions \ref{1l5} and \ref{1l4} below. 
\begin{theorem}[Polynomial decay on the torus]\label{torusthm}
Let $M =\T^2$ and fix $n \in \Nb$ and $\beta \in(-1,\infty)$. Suppose $W(x,y) = \sum_{j=1}^n W_j(x)$, with $W_j \in X^{\beta}_{\theta_j}$. Then there exists $C>0$ such that for all $t> 0$, 
\begin{equation}\label{e:polydecay}
E(u,t)^{\frac{1}{2}} \leq C\langle t\rangle^{-\frac{\beta+2}{\beta+3}} \left(\|\nabla u_0\|_{L^2}+\|u_1\|_{H^1}+\|-\Delta u_0+Wu_1\|_{L^2}\right),
\end{equation}
for the solution $u$ of \eqref{DWE} with respect to any initial data $(u_0, u_1) \in \Dc$. 
\end{theorem}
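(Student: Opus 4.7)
The plan is to reduce the energy decay in \eqref{e:polydecay} to a quantitative resolvent estimate via the theorem of Borichev--Tomilov: polynomial decay at rate $\langle t\rangle^{-(\beta+2)/(\beta+3)}$ with the stated data norms is equivalent, at the level of high-frequency behavior of the semigroup generated by $\Ac$, to the bound
\begin{equation}
\|(i\lambda - \Ac)^{-1}\|_{\Hc \to \Hc} \leq C |\lambda|^{(\beta+3)/(\beta+2)} \quad \text{as } |\lambda| \to \infty.
\end{equation}
Unpacking $(i\lambda - \Ac)(u,v) = (f,g)$ in $\Hc = H^1 \times L^2$ leaves the stationary equation $(-\Delta - \lambda^2 + i\lambda W)u = F$ on $\Tb^2$, with $F$ controlled in $L^2$ by the $\Hc$-norm of $(f,g)$, so it suffices to bound $\|u\|_{H^1}^2 + \lambda^2 \|u\|_{L^2}^2$ by $|\lambda|^{2(\beta+3)/(\beta+2)} \|F\|_{L^2}^2$. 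I would proceed by contradiction, extracting a normalized sequence of pseudo-solutions at frequencies $\lambda_n \to \infty$ whose $L^2$-mass must concentrate in some limiting microlocal region, and show this concentration is inconsistent with the claimed rate.

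Since $W(x,y) = \sum_j W_j(x)$ is $y$-invariant, the next step is to Fourier-decompose $u = \sum_{k\in\Zb} u_k(x) e^{iky}$, which separates the stationary equation into a family of 1D resolvent problems $-u_k'' + (k^2-\lambda^2)u_k + i\lambda W(x) u_k = F_k(x)$ on the circle $\Tb_x$. The behavior splits according to the tangential frequency. When $k^2-\lambda^2$ is large and positive the operator is coercive and the estimate is immediate. When $\lambda^2-k^2$ is comparable to $\lambda^2$ (the transversal regime) one obtains the bound from the 1D propagation of singularities together with the dissipation identity $\lambda \int W|u_k|^2 = \cim \int F_k \bar u_k$. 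The delicate case is the grazing regime $|\lambda^2-k^2| = o(\lambda^2)$, since here $u_k$ can concentrate on the undamped strips $\{W = 0\}$ and is only weakly controlled by the damping.

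The core of the argument is therefore the grazing analysis. Near a boundary point of a damped strip the polynomial vanishing $W_j \sim d(\cdot,\{W_j=0\})^{\beta}$ singles out a natural length scale $h = |\lambda|^{-1/(\beta+2)}$, obtained by balancing $\pd_x^2 \sim h^{-2}$ against $\lambda W \sim \lambda h^{\beta}$. Rescaling the transition region to unit size turns the 1D equation into a model Airy/Bessel-type ODE whose boundary layer absorbs a quantifiable fraction of the incoming energy. Tracking how this absorption propagates across the whole undamped component using either a semiclassical microlocal defect measure argument or a direct quasimode/parametrix construction should produce exactly the loss $|\lambda|\cdot h^{-1} = |\lambda|^{(\beta+3)/(\beta+2)}$ in the resolvent bound.

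The main obstacle is to make this grazing analysis rigorous uniformly in $k$ and simultaneously across all components $W_j \in X^{\beta}_{\theta_j}$, and in particular to rule out simultaneous concentration at both boundaries of an undamped interval. The natural tool is an Agmon-type weighted energy estimate, with weight adapted to the scale $h$, propagating the boundary-layer absorption into the bulk. A separate check is that Plancherel reassembles the 1D resolvent bounds into the desired 2D estimate with no loss in $k$, which is automatic once the 1D constant is uniform. Finally, the combination $\|-\Delta u_0 + W u_1\|_{L^2}$ on the right of \eqref{e:polydecay} arises by identifying it with the natural $\Ac$-graph norm on $\Dc$ produced by the Borichev--Tomilov reduction.
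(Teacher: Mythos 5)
Your high-level roadmap tracks the paper closely: reduce to a resolvent bound for $P_\lambda=-\Delta-i\lambda W-\lambda^2$ via Borichev--Tomilov (the paper packages this as Proposition~\ref{polyprop}, built on Lemmas~\ref{2t8} and \ref{2t1} and the semigroup decomposition of Section~\ref{s2.3}), exploit the $y$-invariance of $W$ to Fourier-decompose in $y$ and reduce to a one-parameter family of one-dimensional resolvent problems on $\Sb^1$ with effective spectral parameter $\mu^2 = \lambda^2 - \lambda_n^2$ (Proposition~\ref{3t2}), and then identify the critical boundary-layer scale $h = |\lambda|^{-1/(\beta+2)}$ with the resulting predicted loss $|\lambda|^{(\beta+3)/(\beta+2)}$ in the semigroup resolvent bound; both the scale and the final exponent match the paper's $\delta_j = \frac{1}{\beta_j+2}$ and the target $\|P_\lambda^{-1}\|_{L^2\to L^2} \lesssim |\lambda|^{1/(\beta+2)}$.

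Where you diverge is in how you propose to actually prove the one-dimensional estimate, which is the substantive core of the argument (Proposition~\ref{3t1}). The paper does not argue by contradiction and does not invoke microlocal defect measures, parametrices, or Agmon weights. Instead it runs a direct Morawetz multiplier argument: introduce $F(x)=|u'|^2+\mu^2|u|^2$ and a piecewise-linear multiplier $b$ whose slope is large ($\lambda^{\delta_j}$) in the $O(\lambda^{-\delta_j})$-window around each transition point $\sigma_j$ and negative in the bulk of the damped region (Lemma~\ref{resolvelemmastart}); the commutator term $\lambda\int W_j|uu'|$ is then bounded by splitting $|x|\lessgtr\e\lambda^{-\delta_j}$, integrating by parts against $|x|^{2\beta_j+1}$ to tame the singularity of $W_j'$, and cycling the dissipation identity $\int W|u|^2\le\lambda^{-1}\int|fu|$ (Lemmas~\ref{resolvelemmamain} and \ref{resolvelemmafinal}). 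This is an elementary but delicate integration-by-parts argument, and it is where the choice $\delta_j=\frac{1}{\beta_j+2}$ enters to exactly cancel the powers of $\lambda$ against the $\int\phi|u'|^2$ term. Your alternative would need to be made uniform in the second parameter $\mu^2$ ranging over all values $\le\lambda^2$, which a sequential compactness/contradiction scheme does not automatically deliver at a precise polynomial rate.

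Two more concrete cautions. First, for $\beta\in(-1,0)$ the rescaled boundary-layer model is not Airy or Bessel type: it is a Schr\"odinger operator with a singular complex Coulomb-type potential $-\partial_y^2 - i(y^\beta + h^{-2\beta/(2+\beta)})$, as the paper works out explicitly for the sharpness construction in Lemma~\ref{4t1}; the Airy picture you describe is the $\beta=1$ (linearly vanishing, bounded) regime. Second, your suggestion to track concentration microlocally is in tension with the actual difficulty: after the $y$-decomposition, the dangerous $u_n$ are low-frequency in $x$ (since $\mu^2 \ll \lambda^2$ is possible), so there is no propagating wavefront on which to run a defect-measure argument; the mechanism is transmission through the singular boundary layer, which the multiplier method captures directly. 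Your proposal is therefore a genuinely different, substantially incomplete route for the hard lemma; the scaffold around it is correct and matches the paper.
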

\begin{theorem}[Sharpness of the decay rates]\label{sharpnessthm}
The rates obtained in Theorem \ref{torusthm} are sharp for $\beta\in(-1,\infty)$. Indeed, let $W(x,y)=\mathbbm{1}_{\abs{x}\ge \frac{\pi}{2}}\left((\abs{x}-\frac{\pi}{2})^\beta+1\right)$ for $\beta\in (-1,0)$. Then for all $C,\epsilon>0$, there exists $t_0>0$ and initial data $(u_0,u_1) \in \Dc$ such that
\begin{equation}
E(u,t_0)^{\frac{1}{2}} > C\langle t_0\rangle^{-\frac{\beta+2}{\beta+3}-\epsilon} \left(\|\nabla u_0\|_{L^2}+\|u_1\|_{H^1}+\|-\Delta u_0+Wu_1\|_{L^2}\right),
\end{equation}
for the solution $u$ of \eqref{DWE} with respect to the initial data $(u_0, u_1)$. 
\end{theorem}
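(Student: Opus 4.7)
My approach is to apply the Borichev--Tomilov characterization of polynomial decay, which reduces sharpness to a resolvent lower bound, and then to exhibit complex approximate eigenvalues of $\mathcal{A}$ accumulating on $i\mathbb{R}$ at the required rate. Strong stability of the semigroup (available since $W\geq 0$ is positive on an open set) gives $i\mathbb{R}\subset\rho(\mathcal{A})$, so Borichev--Tomilov applies on $\mathcal{D}$, whose graph norm is equivalent to the norm on the right-hand side of Theorem~\ref{sharpnessthm}. An improvement of the rate by a positive power of $\langle t\rangle^{-1}$ would force $\|(i\tau_0-\mathcal{A})^{-1}\|_{\mathcal{H}\to\mathcal{H}}=O(\tau_0^{(\beta+3)/(\beta+2)-\epsilon'})$ as real $\tau_0\to\infty$ for some $\epsilon'>0$. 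It is therefore enough to produce, for arbitrarily large real $\tau_0$, a complex $\tau$ with $\operatorname{Re}(\tau)=\tau_0$ and $\operatorname{Im}(\tau)\lesssim\tau_0^{-(\beta+3)/(\beta+2)}$ together with a nonzero $u$ solving $(-\Delta-\tau^2+i\tau W)u=0$ modulo a super-polynomially small error. The vector $U:=(u,i\tau u)$ then lies in $\mathcal{D}$ thanks to the identity $-\Delta u+i\tau Wu=\tau^2 u$, and a direct calculation gives
$$\|(\mathcal{A}-i\tau_0)U\|_\mathcal{H}\lesssim|\tau-\tau_0|\cdot\|U\|_\mathcal{H}\lesssim\tau_0^{-(\beta+3)/(\beta+2)}\|U\|_\mathcal{H},$$
contradicting the hypothetical resolvent bound.

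Using the $y$-invariance of $W$, I would take $u(x,y)=e^{ik_0y}v(x)$ with $k_0\in\mathbb{Z}$ large and reduce to $(-\partial_x^2-\mu^2+i\tau W(x))v=0$ on $\mathbb{T}_x$ with $\mu^2:=\tau^2-k_0^2$. Fixing a small positive integer $n$ and allowing $\mu=n+i\delta$ to have small imaginary part, I set $v(x)=\sin(\mu(x+\pi/2))$ on the undamped strip $|x|<\pi/2$, and on each damped half-interval I take $v$ to be a multiple of the unique solution of the full exterior equation $-w''+i\tau\bigl[(|x|-\pi/2)^\beta+1\bigr]w=\mu^2 w$ that decays toward the antipode. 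Existence and asymptotics of the decaying solution follow from the boundary-layer rescaling $\sigma=\tau^{1/(\beta+2)}(|x|-\pi/2)$: under this substitution the equation becomes an $O(\tau^{-2/(\beta+2)})$-perturbation of the universal singular profile equation $-\Psi''+i\sigma^\beta\Psi=0$, and Liouville--Green/Jost theory produces a decaying profile $\Psi_d$ with $\Psi_d(\sigma)\sim\sigma^{-\beta/4}\exp\bigl(-\tfrac{2\sqrt{-i}}{\beta+2}\sigma^{(\beta+2)/2}\bigr)$ at $+\infty$ and finite nonzero boundary values $\Psi_d(0),\Psi_d'(0)$ at $0$ (available because $\sigma^\beta$ is integrable on $(0,1)$ for $\beta\in(-1,0)$).

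The $C^1$-matching at $x=\pi/2$ simultaneously fixes the multiplicative constant and determines $\delta$: the interior value $\sin(\mu\pi)\sim(-1)^n i\delta\pi$ and slope $\sim n(-1)^n$ must agree with $\Psi_d(0)$ and $\tau^{1/(\beta+2)}\Psi_d'(0)$ times the multiplicative constant, yielding $\delta\sim n\tau^{-1/(\beta+2)}\Psi_d(0)/\Psi_d'(0)$. Since $\tau^2=k_0^2+\mu^2$ with $k_0$ real, this gives $\operatorname{Im}(\tau)\sim n\delta/k_0\sim n^2\tau_0^{-(\beta+3)/(\beta+2)}$, exactly the claimed rate. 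Periodic closure on $\mathbb{T}$ is handled by even reflection about $x=0$; past a boundary layer of width $\tau^{-1/(\beta+2)}\log\tau$ the Jost decay is super-polynomially small, so the resulting mismatch at the antipode $|x|=\pi$ contributes a negligible error. The main technical obstacle is the rigorous construction and quantitative control of the universal profile $\Psi_d$ --- its existence, its sharp two-ended asymptotics, and the explicit nonvanishing values $\Psi_d(0),\Psi_d'(0)$ that drive the matching --- together with uniform bounds on the $O(\tau^{-2/(\beta+2)})$ perturbation relating the universal profile to the full variable-coefficient exterior equation. These are precisely the ingredients that calibrate the matching constants and fix the sharp exponent $(\beta+3)/(\beta+2)$.
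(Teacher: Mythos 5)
Your proposal shares the paper's overall scheme --- $y$-Fourier reduction to a one-dimensional problem, a boundary-layer rescaling $\sigma\sim\lambda^{1/(\beta+2)}(|x|-\pi/2)$ at the damping interface, and Cauchy-data matching there --- and arrives at the same exponent, but it rigorizes the exterior ODE step along a genuinely different route. The paper's Lemma~\ref{4t1} constructs the exterior solution by Lax--Milgram with a prescribed Neumann boundary condition for the rescaled operator $Q_{h,z}$, proves that the Dirichlet trace $v_{h,z}(0)$ is analytic and uniformly bounded in the spectral parameter $z$, and then locates the quasi-eigenvalue $\eta_h\approx 2$ by the minimum-modulus principle; this only requires \emph{a priori bounds} on $v_{h,z}(0)$, not sharp asymptotics. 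Your Liouville--Green/Jost matching would yield more explicit constants, but it hinges on precisely the two-ended asymptotics and nonvanishing of $\Psi_d(0),\Psi_d'(0)$ that you correctly flag as the missing technical core; the paper's argument is designed to avoid establishing these. The packaging is also a little different: the paper builds quasimodes $u_k$ for $P_{\lambda_k}$ on $\Sb^1$ by cutting off the half-line eigenmode, tensors with $\sin(ky)$, and invokes Proposition~\ref{polyprop}, whereas you work directly with approximate eigenvectors $(u,i\tau u)$ of $\Ac$ at complex $\tau$; these are equivalent through Lemmas~\ref{2t8},~\ref{2t11} and Proposition~\ref{BTthm}. A few points to tighten: (i) $\Ac$ has a pole at $0$ (Lemma~\ref{2t9}), so $i\Rb\subset\rho(\Ac)$ is false and Borichev--Tomilov must be applied to the reduced generator $\Acd$ of Section~\ref{s2.3}; (ii) under $\sigma=\tau^{1/(\beta+2)}(|x|-\pi/2)$ the constant $+1$ in $W$ contributes a perturbation of size $\tau^{\beta/(\beta+2)}$, which dominates your stated $O(\tau^{-2/(\beta+2)})$ --- still $o(1)$ since $\beta\in(-1,0)$, so the argument survives, but the bookkeeping is off; (iii) with interior mode $\sin(\mu(x+\pi/2))$ and $\mu\approx n$ even (as in the paper's $\eta\approx 2$), the consistent periodic closure is the \emph{odd} reflection $\sgn(x)(\chi v_h)(|x|-\tfrac{\pi}{2})$ used in the paper, not an even one about $x=0$.
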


Theorems \ref{torusthm} and \ref{sharpnessthm} address Question \ref{p2}: we show that damping of more/less singular behaviour than a characteristic function gives slower/faster decay than $\langle t\rangle^{-\frac{2}{3}}$, and give an explicit relation between the decay rates and the vanishing rates of $W$: when $\beta\rightarrow -1^+$, the decay rate approaches $\langle t\rangle^{-\frac{1}{2}-}$, the upper bound for the energy decay obtained in \cite{aln14}; when $\beta\rightarrow \infty$, the decay rate approaches $\langle t\rangle^{-1+}$, the lower bound. This also fills up the mysterious gap between the upper bound $\langle t\rangle^{-\frac12}$ they found and the upper bound $\langle t\rangle^{-\frac23}$ they could produce with bounded damping. However, there are some complications, as we discuss in Theorem \ref{controlthm}.
Note for $\beta \in [0,\infty)$, the decay rates in Theorem \ref{torusthm} were already shown in \cite{dk20,sta17} and their corresponding sharpness results in Theorem \ref{sharpnessthm} were shown in \cite{kle19b,aln14}, so we here only need to address the case $\beta \in (-1,0)$. Now we define the terminologies:

\begin{figure}
\centering
\subfloat[$V^\beta$ on $\mathbb{S}^1$]{
\includegraphics[page=1, height=10em]{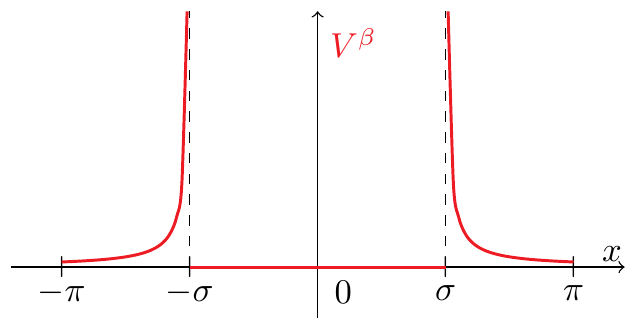}

}\hfill
\subfloat[$W(x)\in X^\beta_\pi$ on $\mathbb{T}^2$]{
\includegraphics[height=10em]{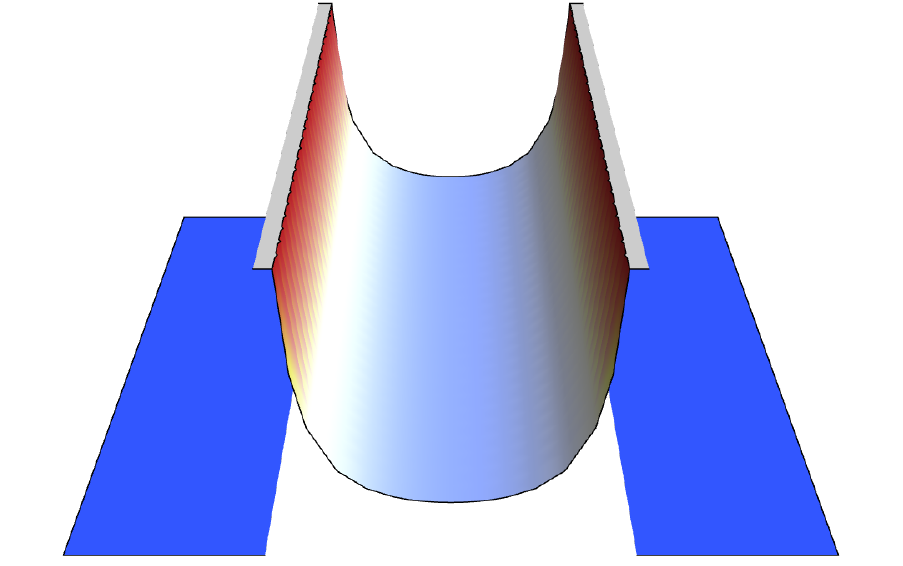}

}
\caption{Polynomial control functions $V^\beta$ and typical damping $W$. }
\label{f1}
\end{figure}

\begin{definition}[Polynomially controlled damping]\label{1l5}
Fix $\theta\in \mathbb{S}^1$. Parametrize $\Sb^1$ by $[-\pi, \pi)$ with ends identified periodically and $\theta=0$. We define, $X_\theta^\beta$, the space of polynomially controlled functions on $\Sb^1$ as the space of measurable functions $f$ on $\mathbb{S}^1$ such that there are $C_0, \sigma>0$ such that $C_0^{-1} V^{\beta}(x) \leq f(x) \leq C_0 V^{\beta}(x)$, where
\begin{equation}\label{1l3}
V^{\beta}(x) = \begin{cases} 0, \qquad &x \in [-\sigma,\sigma],\\
(|x|- \sigma)^\beta, \qquad & |x| \in (\sigma, \pi).
\end{cases} 
\end{equation}
\end{definition}

We will mainly consider damping $W(x,y)$ that is a sum of $V^\beta(x)$ with $\beta \in (-1,0)$ in this paper, so $W$ locally blows up like the polynomial $(\abs{x}-\sigma)^{\beta}$ near $\pm \sigma$: see Figure \ref{f1}. Because of the unboundedness of the damping $W$ we need to adjust the space our initial data is in. 

\begin{definition}[Admissible initial data]\label{1l4}
Let $\Dc=\{(u_0,u_1)\in H^1(M)\times H^1(M): -\Delta u_0+W u_1\in L^2(M)\}$ equipped with the norm
\begin{equation}
\|(u_0,u_1)\|_\Dc^2=\|u_0\|_{H^1}^2+\|u_1\|_{H^1}^2+\|-\Delta u_0+Wu_1\|_{L^2}^2. 
\end{equation}
\end{definition}
\begin{remark}[Relation between $\Dc$ and $H^2\times H^1$]
We remark that when $\beta\in (-\frac{1}{2},\infty)$, $\Dc=H^2(M)\times H^1(M)$ and the $\Dc$-norm is equivalent to that of $H^2\times H^1$. When $\beta\in (-1, -\frac{1}{2}]$,
\begin{equation}
H^{\frac{3}{2}-} \times H^1 \supset \Dc \supsetneq H^2 \times \{u_1 \in H^1: u_1=0 \text{ on } W^{-1}(\infty)\}.
\end{equation}
Furthermore, in this case, $\Dc$ and $H^2\times H^1$ are mutually not a subset to each other. 
\end{remark}

We now move on to results in the setting of normally $L^p$-damping on compact manifolds. Normally $L^p$-damping are those damping functions that look like an $L^p$-function along the normal direction near a hypersurface, given in Definition \ref{2t14}. Note that for $\beta \in (-1,0)$, polynomially controlled damping in $X^{\beta}_{\theta}$, are normally $L^{-\frac{1}{\beta}-}$ and for $\beta\ge 0$ they are (normally) $L^{\infty}$.

The natural question to ask, is if the a priori upper bound $\langle t\rangle^{-\frac12}$, obtained via Schr\"odinger observability, still holds when $W\notin L^\infty$? The answer is negative: although we are able to obtain an a priori decay rate for such $W$ via Schr\"odinger observability, unfortunately it is slower.

\begin{theorem}[Schr\"odinger observability gives polynomial decay]\label{controlthm}
Let $W$ be normally $L^p$ for $p\in (1,\infty)$. Assume the Schr\"odinger equation is exactly observable from an open set $\Omega$, and there exists $\epsilon>0,$ such that $W\ge \epsilon$ almost everywhere on an open neighbourhood of $\overline{\Omega}$. Then 
\begin{equation}
E(u,t)^{\frac{1}{2}} \leq C\langle t\rangle^{-\frac{1}{2+\frac1p}} \left(\|\nabla u_0\|_{L^2}+\|u_1\|_{H^1}+\|-\Delta u_0+Wu_1\|_{L^2}\right),
\end{equation}
for the solution $u$ of \eqref{DWE} with respect to any initial data $(u_0, u_1) \in \Dc$. When $p=1$, $E^{\frac{1}{2}}\le C\langle t\rangle^{-1/(3+)}$. When $p=\infty$, $E^{\frac{1}{2}}\le C\langle t\rangle^{-1/2}$ . 
\end{theorem}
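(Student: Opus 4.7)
The plan is to reduce to a high-frequency resolvent estimate via the Borichev--Tomilov theorem, and then combine Schr\"odinger observability from $\Omega$ with the dissipation identity, the new ingredient being an interpolation inequality that handles the unboundedness of $W$. On the energy space $\Hc=H^1\times L^2$ with generator $\Ac$ of \eqref{DWE} and domain $\Dc$, Borichev--Tomilov states that $E^{1/2}\lesssim\langle t\rangle^{-1/\alpha}$ for data in $\Dc$ is equivalent to $\|(\Ac-i\lambda)^{-1}\|_{\Hc\to\Hc}=O(|\lambda|^{\alpha})$ as $|\lambda|\to\infty$ (one also needs $i\Rb\cap\spec(\Ac)\subset\{0\}$, which is standard), so I aim at $\alpha=2+1/p$. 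Writing $(\Ac-i\lambda)u=f$ componentwise gives $u_1=i\lambda u_0+f_0$ and $(\Delta+\lambda^2-i\lambda W)u_0=g$ with $g=f_1+(W+i\lambda)f_0$; the dissipation identity (imaginary part of the pairing with $u_0$) yields $\int W|u_1|^2\lesssim \|f\|_\Hc\|u\|_\Hc$.

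Since $W\ge\e$ on a neighborhood of $\overline\Omega$, the dissipation controls $u_1$ on $\Omega$: $\|u_1\|^2_{L^2(\Omega)}\lesssim\|f\|_\Hc\|u\|_\Hc$. Using $u_1=i\lambda u_0+f_0$ converts this to $|\lambda|^2\|u_0\|^2_{L^2(\Omega)}\lesssim\|f\|_\Hc\|u\|_\Hc+\|f\|_\Hc^2$. Feeding this into the resolvent form of Schr\"odinger observability from $\Omega$, $\|v\|^2_{L^2}\lesssim\|v\|^2_{L^2(\Omega)}+|\lambda|^{-2}\|(\Delta+\lambda^2)v\|^2_{L^2}$ at high frequency, applied to $v=u_0$ with $(\Delta+\lambda^2)u_0=g+i\lambda Wu_0$, produces
\begin{equation}
|\lambda|^2\|u_0\|^2_{L^2}\lesssim \|f\|_\Hc\|u\|_\Hc+\|g\|^2_{L^2}+|\lambda|^2\|Wu_0\|^2_{L^2}.
\end{equation}

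The crucial step is to absorb the $|\lambda|^2\|Wu_0\|^2_{L^2}$ term; this is where the unboundedness of $W$ bites, and is the main obstacle. For $W\in L^\infty$ one has trivially $\|Wu_0\|^2\leq \|W\|_{L^\infty}\int W|u_0|^2$, recovering Anantharaman--L\'eautaud's $\alpha=2$. For normally $L^p$ damping I would instead establish the interpolation inequality
\begin{equation}
\|Wu_0\|^2_{L^2}\lesssim \brac{\int W|u_0|^2}^{1-1/p}\|u_0\|^{2/p}_{H^1},
\end{equation}
proved by localizing to a tubular neighborhood of the singular hypersurface of $W$, applying H\"older in the normal direction against a one-dimensional Gagliardo--Nirenberg bound on $u_0$, then integrating in the tangential variables. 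Combined with $\|u_0\|_{H^1}\lesssim |\lambda|\|u\|_\Hc$ and the dissipation bound $\int W|u_0|^2\lesssim |\lambda|^{-1}\|g\|\|u_0\|$, this yields $|\lambda|^2\|Wu_0\|^2_{L^2}\lesssim |\lambda|^{1+1/p}\|f\|_\Hc\|u\|_\Hc$, precisely the $|\lambda|^{1/p}$ loss that upgrades the exponent from $2$ to $2+1/p$; the $Wf_0$ contribution to $g$ is handled by the same interpolation applied to $f_0$.

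The endpoint $p=\infty$ is immediate as $1/p=0$ kills the loss, recovering the classical $\langle t\rangle^{-1/2}$; at $p=1$, $W^p=W$ is only borderline $L^1_{\mathrm{loc}}$, so the H\"older step loses an $\e$ and produces $\langle t\rangle^{-1/(3+)}$. Bootstrapping the resulting inequality $|\lambda|^2\|u_0\|^2_{L^2}\lesssim |\lambda|^{1+1/p}\|f\|_\Hc\|u\|_\Hc+\cdots$ together with $u_1=i\lambda u_0+f_0$ gives $\|u\|_\Hc\lesssim|\lambda|^{2+1/p}\|f\|_\Hc$, closing the resolvent estimate.
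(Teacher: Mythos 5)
Your overall scheme---reduce to a high-frequency resolvent bound via Borichev--Tomilov and then combine Schr\"odinger observability from $\Omega$ with the dissipation identity---matches the paper's, so the outline is sound. The step that fails is the interpolation inequality
\begin{equation}
\|Wu_0\|_{L^2}^2\lesssim \left(\int W|u_0|^2\right)^{1-1/p}\|u_0\|_{H^1}^{2/p},
\end{equation}
which is false for $p\in(1,2)$, precisely the range where the theorem gives new information. If $W\sim\rho^\beta$ near the singular hypersurface with $\beta\in(-1/p,-1/2]$---a range that is nonempty exactly when $p<2$---then $W^2\sim\rho^{2\beta}\notin L^1_{\text{loc}}$, so taking any $u_0\in H^1$ that does not vanish on the singular set makes the left-hand side infinite while both $\int W|u_0|^2<\infty$ (since $W\in L^1_{\text{loc}}$ and $u_0$ is locally bounded) and $\|u_0\|_{H^1}<\infty$. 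Such $u_0$ cannot be excluded: the Remark after Definition \ref{1l4} notes that $\Dc\not\subset H^2\times H^1$ in exactly this regime, and the quasimodes in Section \ref{sharpnesssection} deliberately have nonzero Dirichlet data at the singular set. The same obstruction breaks your reduction $g=f_1+(W+i\lambda)f_0$, since $Wf_0$ need not lie in $L^2$ for $f_0\in H^1$.

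The paper circumvents this by never forming $Wu$ in $L^2$. The sharp multiplier bound (Lemma \ref{2t3}) is that $\sqrt{W}$ maps $H^{1/2p}\to L^2$ and, dually, $L^2\to H^{-1/2p}$; $W$ itself is only a bounded map $H^{1/2p}\to H^{-1/2p}$. Accordingly, the observability estimate is run in the negative Sobolev scale $H^{-s}$ with $s=\frac{1}{2p}$ (Lemma \ref{2t15}, obtained by conjugating the resolvent-observability estimate of \cite[Theorem 5.1]{mil05} by the spectral weight $\Lambda^{-s}$ and invoking an elliptic estimate). In that scale $Wu$ is controlled by $\|\sqrt{W}u\|_{L^2}$, which is exactly what the dissipation identity provides, and one obtains $\|u\|_{H^{-s}}^2\lesssim\|f\|_{H^{-s}}^2+\lambda^2\|\sqrt{W}u\|_{L^2}^2$. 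The upgrade from $H^{-s}$ to $L^2$ is then done by the standard interpolation \eqref{2l31}, costing $\langle\lambda\rangle^{2s}=\langle\lambda\rangle^{1/p}$; this, not a direct $L^2$ interpolation of $\|Wu\|$, is the genuine source of the $1/p$ loss in the exponent. Replacing your interpolation step with this $H^{-s}$ mechanism is what is needed to close the argument.
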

When $p=\infty$, we can indeed have the $\langle t\rangle^{-1/2}$-decay: this bounded case was known in \cite{aln14}. Together with the Schr\"odinger observability results in \cite{bz12,am14}, we immediately have:
\begin{corollary}
Suppose $M=\mathbb{T}^d$ for $d\ge 1$. Let $W$ be normally $L^p$ for $p\in (1,\infty)$, and assume there exists $\epsilon>0$, such that $W\ge \epsilon$ almost everywhere on an open set. Then 
\begin{equation}
E(u,t)^{\frac{1}{2}} \leq C\langle t\rangle^{-\frac{1}{2+\frac1p}} \left(\|\nabla u_0\|_{L^2}+\|u_1\|_{H^1}+\|-\Delta u_0+Wu_1\|_{L^2}\right),
\end{equation}
for the solution $u$ of \eqref{DWE} with respect to any initial data $(u_0, u_1) \in \Dc$. When $p=1$, $E^{\frac{1}{2}}\le C\langle t\rangle^{-1/(3+)}$. When $p=\infty$, $E^{\frac{1}{2}}\le C\langle t\rangle^{-1/2}$ . 
\end{corollary}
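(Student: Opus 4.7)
The plan is to reduce the corollary directly to Theorem \ref{controlthm} by supplying the missing Schr\"odinger observability input on $\mathbb{T}^d$. The hypothesis provides a non-empty open set $U \subset \mathbb{T}^d$ on which $W \ge \epsilon$ almost everywhere. First I would choose any non-empty open $\Omega$ with $\overline{\Omega} \subset U$, for instance a small open ball compactly contained in $U$. Then $U$ itself serves as an open neighbourhood of $\overline{\Omega}$ on which $W \ge \epsilon$ a.e., so the damping assumption of Theorem \ref{controlthm} is satisfied for this choice of $\Omega$.

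The second ingredient is the exact observability of the Schr\"odinger equation on $\mathbb{T}^d$ from $\Omega$. For $d=1$ this is immediate from Ingham-type inequalities, and for $d \ge 2$ it is the content of \cite{bz12, am14}, which establish exact observability from any non-empty open subset of $\mathbb{T}^d$ (in particular from our $\Omega$). There is no real obstacle here beyond citing the correct result, since the assumption $W \ge \epsilon$ on an open set, combined with the cited observability on the torus, exactly matches the two structural hypotheses of Theorem \ref{controlthm}.

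With both hypotheses verified, an application of Theorem \ref{controlthm} to $M = \mathbb{T}^d$ yields the stated bound
\begin{equation}
E(u,t)^{\frac{1}{2}} \leq C\langle t\rangle^{-\frac{1}{2+\frac1p}} \left(\|\nabla u_0\|_{L^2}+\|u_1\|_{H^1}+\|-\Delta u_0+Wu_1\|_{L^2}\right)
\end{equation}
for $p \in (1, \infty)$, together with the endpoint rates $\langle t \rangle^{-1/(3+)}$ for $p=1$ and $\langle t \rangle^{-1/2}$ for $p=\infty$, for any initial data $(u_0, u_1) \in \Dc$. No further analysis is needed: the entire content of the corollary is the observation that on the torus the Schr\"odinger observability hypothesis in Theorem \ref{controlthm} is automatic as soon as $W$ is bounded below on any open set.
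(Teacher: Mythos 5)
Your proposal is correct and matches the paper's own (implicit) proof: the corollary is indeed just Theorem \ref{controlthm} combined with the Schr\"odinger observability on $\mathbb{T}^d$ from any non-empty open set due to \cite{bz12,am14}. Your care in shrinking the given open set to a compactly contained $\Omega$ so that $W\ge\epsilon$ holds on a neighbourhood of $\overline{\Omega}$ is a small but genuine refinement of what the paper states only in passing.
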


Theorems \ref{torusthm}, \ref{sharpnessthm} and \ref{controlthm} give both positive and negative answers to Question \ref{p1}: on one hand, in Theorem \ref{sharpnessthm} for any polynomial rates between $\langle t\rangle^{-\frac{1}{2}}$ and $\langle t\rangle^{-\frac{2}{3}}$, we found singular damping functions that are sharply stabilized at that rate. On the other hand, Theorem \ref{controlthm} implies that for normally $L^p$-damping, the upper bound obtained via the Schr\"odinger observability can be as weak as $\langle t\rangle^{-\frac{1}{3}+}$ as $p\rightarrow 1^+$, instead of $\langle t\rangle^{-\frac{1}{2}}$. Thus while we filled the gap between $\langle t\rangle^{-\frac{1}{2}}$ and $\langle t\rangle^{-\frac{2}{3}}$ by making $W$ more singular, we also create a new gap between $\langle t\rangle^{-\frac{1}{3}}$ and $\langle t\rangle^{-\frac{1}{2}}$.

A peculiar feature to note is that observability of the Schr\"odinger equation does not depend on the singularity of $W$, but the decay rate produced does. Such dependency is not observed in the case of bounded damping. We point out that the rates we obtain in Theorem \ref{controlthm} are better than the rates obtainable with \cite{cpsst19}: see Remark \ref{2t16}. 

We now move on to address some open problems concerning the finite time extinction phenomenon concerning singular damping.
\begin{theorem}[Backward uniqueness]\label{backwardthm}
Let $W$ be normally $L^p$ for $p\in (1,\infty]$. Then the damped wave semigroup $e^{t\Ac}$ is backward uniqueness. That is:
\begin{enumerate}
\item If $u(t)=0, \partial_t u(t)=0$ at some $t>0$, then $u_0=u_1=0$. 
\item If $E(u,t)=0$ at some $t>0$, then $E(u,0)=0$. 
\end{enumerate}
 
\end{theorem}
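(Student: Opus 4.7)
The plan is first to reduce (2) to (1), and then to prove (1) by a backward-in-time energy estimate applied to the forward solution already in hand (thereby sidestepping the ill-posed backward Cauchy problem entirely). For the reduction, if $E(u,t_0)=0$ then $u(t_0)\equiv c$ is a spatial constant and $\p_t u(t_0)=0$; since constants are annihilated by both $\Delta$ and $\p_t$, the shifted solution $\tilde u:=u-c$ stays in $\Dc$, solves the same damped wave equation, and satisfies $\tilde u(t_0)=\p_t\tilde u(t_0)=0$. An application of (1) forces $\tilde u\equiv 0$, so $u\equiv c$ and therefore $E(u,0)=0$.

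For (1), fix the forward solution $u\in C([0,t_0];\Dc)$ with $u(t_0)=\p_t u(t_0)=0$ and consider its time reversal $v(s):=u(t_0-s)$ on $[0,t_0]$. Then $v$ solves the anti-damped problem $\p_s^2 v-\Delta v-W\p_s v=0$ with vanishing Cauchy data at $s=0$, and $v$ automatically inherits all the regularity of $u$. The backward energy $E_v(s):=\tfrac12\|\nabla v(s)\|_{L^2}^2+\tfrac12\|\p_s v(s)\|_{L^2}^2$ satisfies $E_v(0)=0$ and $\tfrac{d}{ds}E_v(s)=\int_M W(\p_s v)^2\,dx$. The strategy is to estimate the right-hand side by a Gronwall-compatible expression in $E_v$ and conclude $E_v\equiv 0$ on $[0,t_0]$, hence $v\equiv 0$. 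For $p=\infty$ this is immediate: $\int W(\p_s v)^2 \le\|W\|_\infty\|\p_s v\|_{L^2}^2\le 2\|W\|_\infty E_v(s)$, which recovers the classical bounded-damping argument. For $p\in(1,\infty)$, the key ingredient is a Hardy/trace-type inequality of the form $\int_M W|\phi|^2\,dx\le C\|\phi\|_{H^{s_p}}^2$ for some $s_p\in(0,1)$ depending on $p$, obtained by coarea in the direction normal to the singular hypersurface of $W$ combined with a Sobolev trace embedding on cross-sections---this is exactly the embedding that fails logarithmically at $p=1$, matching the exclusion of $p=1$ from the hypothesis. Interpolating $\|\p_s v\|_{H^{s_p}}^2\le \|\p_s v\|_{L^2}^{2(1-s_p)}\|\p_s v\|_{H^1}^{2s_p}$ against the uniform bound on $\|\p_s v\|_{H^1}$ available from $\Dc$-regularity on the finite interval, one reduces matters to a differential inequality $E_v'(s)\le CE_v(s)^{1-s_p}$, which still forces $E_v\equiv 0$ on $[0,t_0]$ by elementary ODE comparison together with $E_v(0)=0$.

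The main obstacle is calibrating the trace/Hardy-type inequality to the codimension-one singularity of $W$: for $p>1$ the normal direction carries an integrable singularity with a Sobolev-compatible exponent, whereas the inequality degenerates as $p\downarrow 1$, precisely mirroring the endpoint failure in the statement. A secondary but routine ingredient is the a priori $H^1$-bound on $\p_t u$ across $[0,t_0]$, which follows from forward propagation of $\Dc$-regularity of the damped wave semigroup and a Gr\"onwall estimate on the finite interval. Once both are in place, the Gr\"onwall step closes on the backward energy and yields the claimed backward uniqueness.
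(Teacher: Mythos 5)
Your reduction of (2) to (1) by subtracting the constant $u(t_0)$ is correct and clean, and your $p=\infty$ case is a valid elementary proof (there the Gronwall inequality $E_v'\le 2\|W\|_\infty E_v$ with $E_v(0)=0$ genuinely closes). However, the argument for $p\in(1,\infty)$ has a fatal gap at the final ODE step. Via Lemma \ref{2t3} and interpolation you arrive at $E_v'(s)\le C\,E_v(s)^{1-s_p}$ with $s_p=\frac{1}{2p}\in(0,\tfrac12)$ and $E_v(0)=0$, and you assert this forces $E_v\equiv 0$ ``by elementary ODE comparison.'' It does not: the modulus $y\mapsto y^{1-s_p}$ fails the Osgood condition ($\int_0^\epsilon y^{s_p-1}\,dy<\infty$), and the ODE $y'=Cy^{1-s_p}$, $y(0)=0$ has the nontrivial solution $y(s)=(Cs_p\,s)^{1/s_p}$ in addition to $y\equiv 0$. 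Consequently the differential inequality only yields the a priori bound $E_v(s)\lesssim s^{1/s_p}$, which constrains the rate at which energy could appear but does not exclude it. This is precisely the threshold phenomenon the paper is contending with: at $\beta=-1$ (formally $p=1$) the works \cite{fhs20,cc01} exhibit genuine finite-time extinction, and your estimate is, by itself, entirely consistent with such behaviour. Sharpening the power to $1$ would require $W\in L^\infty$, which is exactly the case you already handled; for unbounded $W$ the energy method as formulated is structurally too weak.

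The paper takes a fundamentally different route: it establishes resolvent bounds $\|(\Ac-re^{\pm i\theta})^{-1}\|\le C$ along rays of angle $\theta\in(\pi/2,\pi)$ in the left half-plane (Proposition \ref{2t6} combined with Lemmas \ref{2t8} and \ref{2t11}), and then invokes the Lasiecka--Renardy--Triggiani backward-uniqueness theorem for semigroups (Proposition \ref{2t17}). The elbow room $r\in(\tfrac{1}{2p},\tfrac12)$ used in Proposition \ref{2t6} (which collapses at $p=1$) is the spectral-side manifestation of the same endpoint you correctly identified, but the resolvent approach converts it into an actual uniqueness statement rather than just a polynomial bound. If you want to salvage an energy-functional proof for $p\in(1,\infty)$, you would need something like a logarithmic-convexity or Carleman-weight argument rather than a pointwise Gronwall estimate; the one-sided inequality $E_v'\le CE_v^{1-s_p}$ alone cannot do the job.
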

This theorem states that there cannot be finite-time extinction of solutions or energy when the damping vanishes like $x^\beta$ for $\beta\in (-1,\infty]$. This is in contrast to the limit case $\beta=-1$, studied in \cite{fhs20,cc01} where particular setups with damping of the form $2x^{-1}$ were found and all solutions go extinct in finite time. Such finite-time extinction phenomenons are of note as they are rarely observed for linear equations. 

On $M=\Sb^1$, our last theorem states that exponential decay occurs when $W$ is a finite sum of polynomially controlled functions and bounded functions. 
\begin{theorem}[Exponential decay on the circle]\label{circlethm}
Suppose $M=\Sb^1$ and for $1 \leq j \leq n,$ $W_j \in L^{\infty}$ or $W_j \in X_{\theta_j}^{\beta_j}$ with $\beta_j \in (-1, 0)$. If $W=\sum_{j=1}^n W_j$, then there exist $C,c>0$ such that 
\begin{equation}
E(u,t) \leq Ce^{-ct} E(u,0),
\end{equation}
for the solution $u$ of \eqref{DWE} with respect to any initial data $(u_0, u_1) \in H^1\times L^2$. 
\end{theorem}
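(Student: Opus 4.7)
The plan is to apply the Gearhart-Huang-Prüss characterization: exponential decay of the semigroup $e^{t\mathcal{A}}$ on $\mathcal{H}=H^1\times L^2$ is equivalent to $i\mathbb{R}\subset\rho(\mathcal{A})$ together with $\sup_{\lambda\in\mathbb{R}}\|(i\lambda-\mathcal{A})^{-1}\|_{\mathcal{L}(\mathcal{H})}<\infty$. Well-posedness on $\mathcal{H}$ with the admissible domain $\mathcal{D}$ of Definition \ref{1l4} follows from Lumer-Phillips, using $W\ge 0$ and $W\in L^1(\mathbb{S}^1)$ (which holds because each $\beta_j>-1$); the zero mode of $-\Delta$ is extracted by quotienting by constants. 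Imaginary eigenvalues $\lambda\ne 0$ are excluded by the standard imaginary-part identity, which forces any eigenfunction to vanish on the open set $\{W>0\}$, combined with unique continuation for the ODE $-u''=\lambda^2 u$ on $\mathbb{S}^1$.

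The core of the proof is the uniform high-frequency resolvent bound. Arguing by contradiction, suppose there exist $\lambda_n\in\mathbb{R}$ with $|\lambda_n|\to\infty$ and $(u_n,v_n)\in\mathcal{D}$ with $\|(u_n,v_n)\|_\mathcal{H}=1$ while $(F_n,G_n):=(i\lambda_n-\mathcal{A})(u_n,v_n)\to 0$ in $\mathcal{H}$. The system $i\lambda_n u_n-v_n=F_n$, $i\lambda_n v_n-\Delta u_n+Wv_n=G_n$ combines into
\begin{equation}
-\Delta u_n+Wv_n=\lambda_n^2 u_n+G_n+i\lambda_n F_n,
\end{equation}
an identity in $L^2$ by the definition of $\mathcal{D}$. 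Pairing with $\bar u_n$, taking imaginary parts, controlling the cross term $\int W F_n\bar u_n\,dx$ via the embedding $H^1(\mathbb{S}^1)\hookrightarrow L^\infty$ and $W\in L^1$, and using the a priori bound $|\lambda_n|\|u_n\|_{L^2}\le \|v_n\|_{L^2}+\|F_n\|_{L^2}\le 2$, yields $|\lambda_n|\int_{\mathbb{S}^1}W|u_n|^2\,dx=o(1)$. Since $W\ge c>0$ on an open set $\Omega_0\subset\mathbb{S}^1$ obtained by excising small neighborhoods of the singular points $\theta_j$, this forces $\|u_n\|_{L^2(\Omega_0)}\to 0$.

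To propagate the vanishing from $\Omega_0$ to all of $\mathbb{S}^1$ I would extract a semiclassical defect measure $\mu$ on $T^*\mathbb{S}^1$ from $\{u_n\}$ at scale $h_n=|\lambda_n|^{-1}$; $\mu$ is supported on the characteristic set $\{|\xi|=1\}$ and, by propagation, is invariant under the geodesic flow on $\mathbb{S}^1$, which is uniform translation. Since $\mu_x$ vanishes on $\Omega_0$, flow invariance then forces $\mu\equiv 0$, contradicting the semiclassical energy lower bound coming from $\|(u_n,v_n)\|_\mathcal{H}=1$. The main obstacle is justifying the defect-measure argument with unbounded $W$: standard theory is developed for continuous symbols, so I would truncate $W_M=\min(W,M)$, work inside $\{W\le M\}$ where the damping acts as an $L^\infty$ perturbation, and then exhaust $\mathbb{S}^1$ as $M\to\infty$, using $W\in L^1$ to pass to the limit in the damping observation. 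The low-frequency regime is closed by a compactness argument using the compact Sobolev embedding on $\mathbb{S}^1$ and the absence of imaginary eigenvalues, and these together yield the uniform resolvent bound on $i\mathbb{R}$ and hence the claimed exponential decay.
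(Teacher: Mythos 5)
Your high-level framework agrees with the paper: Gearhart--Pr\"uss--Huang reduces exponential decay to a uniform bound $\sup_{\lambda\in\mathbb{R}}\|(i\lambda-\Acd)^{-1}\|<\infty$ (this is exactly Proposition~\ref{polyprop}(2) together with Proposition~\ref{GPHthm}), the well-posedness and the $\lambda=0$ mode are handled as in Section~\ref{semigroupsection}, and imaginary eigenvalues are excluded via the imaginary-part identity plus unique continuation, as in Proposition~\ref{maplemma}. The divergence --- and the gap --- is in how the uniform high-frequency resolvent bound is obtained.

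The paper proves it \emph{directly} by a Morawetz multiplier method (Proposition~\ref{3t1}, Lemmas~\ref{resolvelemmastart}--\ref{resolvelemmafinal}): a piecewise-linear multiplier $b$ whose derivative $\phi$ is pumped up to $\lambda^{\delta_j}$ on a $\lambda^{-\delta_j}$-window around each singular point $\sigma_j$, with $\delta_j=\frac1{2+\beta_j}$ chosen precisely so that the uncontrolled $\int\phi|u'|^2$ term that arises on the singular window is $O(1)$ and can be absorbed. This explicit weight is what compensates for the fact that near the singularity both $W_j$ and $W_j'$ blow up. You instead propose a contradiction argument closed by a semiclassical defect measure and flow invariance. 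That route has a genuine gap which the truncation $W_M=\min(W,M)$ does not close. Flow invariance of the defect measure requires the damping to be an $O(h)$ subprincipal perturbation in a class where $[\mathrm{Op}_h(a),W]$ is lower order; with $W\sim(|x|-\sigma)^\beta$ for $\beta\in(-1,0)$ this commutator is not $O(h)$ near $\sigma$, and $Wu_n$ itself need not be in $L^2$. Restricting to $\{W\le M\}$ cannot fix this: for any finite $M$ the set $\{W\le M\}$ omits shrinking intervals around each $\sigma_j$, and these intervals disconnect the undamped arc $\{W=0\}$ from your observed region $\Omega_0\subset\{W\ge c\}$. Since in one dimension the geodesic flow must pass through those excised intervals to carry vanishing from $\Omega_0$ into $\{W=0\}$, the flow-invariance step is exactly the one you would need to justify across the singularity, and you have removed the region where it must be justified. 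This is not a routine limiting argument; in fact the whole content of the theorem is that, for $\beta\in(-1,0)$, the singularity is not strong enough to trap a high-frequency mode on $\{W=0\}$, and that fact has to be \emph{proved}, not assumed via a standard propagation theorem that does not apply. A smaller technical point: you would also need to show $\int W\,d\mu=0$ from $\lambda_n\int W|u_n|^2\to0$, which again is not automatic when $W$ is not a continuous symbol. As written, the proposal does not constitute a proof of the high-frequency resolvent bound, and that bound is the heart of the theorem.
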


This theorem shows that, in one dimension, the geometric control implies exponential decay even if there are some singularities, as long as the singularities are not too large. 

\begin{remark}
We heuristically interpret that the singularity of $W$ near $\partial\{W=0\}$ prevents the propagation of low-frequency modes into $\{W>0\}$.  The singularity reflects energy back into $\{W=0\}$ as well as transmitting some into $\{W>0\},$ and the greater the singularity the more energy it reflects. In the setting of Theorems \ref{torusthm} and \ref{sharpnessthm}, the low-frequency sideways propagation of vertically concentrated modes in $\{W=0\}$ has a $\beta$-dependent transmission rate into $\{W>0\}$. This is why the energy decay rate depends on $\beta$. On the other hand, the singularity does not affect high-frequency propagation much. This is best seen in the setting of Theorem \ref{circlethm}, where all high-frequency modes penetrate into $\{W>0\}$ and are damped exponentially.
\end{remark}

\begin{remark}
\begin{enumerate}[wide]
\item Theorems \ref{torusthm} and \ref{circlethm} hold for the limit case $\beta=-1$, when the evolutional semigroup is restricted to $\{(u,v)\in H^1\times L^2: u|_{W^{-1}(\infty)}=0\}$ with its generator defined on $\{(u,v)\in H^2\times H^1: u|_{W^{-1}(\infty)}=v|_{W^{-1}(\infty)}=0\}$, without any major changes to the proofs. To keep this paper concise, we choose not to pursue this direction. 
\item Note, the proof of Theorem \ref{circlethm} suffices for $W_j= 1_{[-\pi,-\sigma]}(x) V_j$ or $1_{[\sigma,\pi)}(x)V_j$ for $V_j \in X^{\beta_j}_{\theta_j}$. However for ease of notation we only work with the symmetric definition of polynomial control. See Remark \ref{oneside} for further details. 
\end{enumerate}
\end{remark}

\subsection{Literature review}
The equivalence of uniform stabilization and geometric control for continuous damping functions was proved by Ralston \cite{ral69}, and Rauch and Taylor \cite{rt74} (see also \cite{blr88}, \cite{blr92} and \cite{bg97}, where $M$ is also allowed to have a boundary). For finer results concerning discontinuous damping functions, see Burq and G\'erard \cite{bg20}.

Decay rates of the form \eqref{e:endec} go back to Lebeau \cite{leb93}. If we assume only that $W \in C(M)$ is non-negative and not identically $0$, then the best general result is that $r(t)=1/\log(2+t)$ in \eqref{e:endec} \cite{bur98},\cite{leb93}. Furthermore, this is optimal on spheres and some other surfaces of revolution \cite{leb93}. For recent work on logarithmic decay see \cite{bm23}. At the other extreme, if $M$ is a negatively curved (or Anosov) surface, $W \in C^\infty(M)$, and $W \not \equiv 0$, then $r(t)$ may be chosen exponentially decaying in \cite{dj18,jin20,djn22}. 

When $M$ is a torus, these extremes are avoided and the best bounds are polynomially decaying in \eqref{e:endec}. Anantharaman and L\'eautaud \cite{aln14} show \eqref{e:endec} holds with $\<t\>^{-1/2}$ when $W \in L^\infty$, and $W\ge \epsilon$ on some open set for some $\epsilon>0$, as a consequence of Schr\"odinger observability/control  \cite{jaf90, mac10, bz12, am14}. The more recent result of Burq and Zworski on Schr\"odinger observability and control \cite{bz19} weakens the final requirement  to merely $W \not \equiv 0$. Anantharaman and L\'eautaud \cite{aln14} further show that if $\supp W$ does not satisfy the geometric control condition then \eqref{e:endec} cannot hold for $\<t\>^{-1-\e}$. They also show if there exists $C>0$, such that $W$ satisfies $|\nabla W| \leq C W^{1-\e}$ for $\e<1/29$, and $W \in W^{k_0, \infty}$ for $k_0 \geq 8$, then $\eqref{e:endec}$ holds with $\<t\>^{\frac{-1}{1+4\e}}$. See also \cite{bh07}. 

Sharp decay results have been obtained on the torus when the damping is taken to be polynomially controlled, bounded and $y$-invariant. In particular \cite{kle19b} and \cite{dk20} together show that for such damping with $\beta> 0$ \eqref{e:endec} holds with $\<t\>^{-\frac{\beta+2}{\beta+3}}$ and there are some solutions decaying no faster than this rate. See also \cite{aln14} and \cite{sta17} for the original proof of the case $\beta=0$. For improved decay rates under different geometric assumptions on the support of the damping see \cite{ll17} and \cite{sun22}. In \cite{wan21,wan21b}, the second author showed that \eqref{e:endec} holds with $\<t\>^{-\frac{1}{2}}$ when there is boundary damping. The boundary damping has a singularity structure similar to $\delta(x)$, which is in the Besov space $B^{-1}_{\infty,\infty}$, hinting that the decay rate $\<t\>^{-\frac{\beta+2}{\beta+3}}$ still holds when $\beta=-1$. We then conjectured that $\<t\>^{-\frac{\beta+2}{\beta+3}}$ holds and is optimal for all $\beta\in (-1,\infty)$. In this paper, we prove that our conjecture is correct. 

As mentioned above, using observability of an associated Schr\"odinger equation to prove energy decay for the damped wave equation was used in \cite{aln14} to prove $\<t\>^{-\frac{1}{2}}$ energy decay. This relied on a characterization of observability due to \cite{mil05}. This approach was also applied to prove energy decay for a semilinear damped wave equation in \cite{jl20}. A more abstract, semigroup focused treatment for singular damping is provided in \cite{cpsst19}.

Another motivation for the study of unbounded damping to understand their overdamping behavior. Overdamping here heuristically means ``more'' damping leads to slower decay. At a basic level this can be observed on $M=[0,1]$ with $W=C$, a constant. Such a damping satisfies the GCC and so experiences exponential decay, but as shown in \cite{cz94} the exponential rate is not monotone in $C$. The decay becomes faster as $C$ increases from $0$, up to a point, and then becomes slower as $C$ goes to infinity. Because of this, one might expect that unbounded damping would exhibit slower decay than bounded analogs. However, when $M=[0,1]$ and $W=\frac{2}{x},$  \cite{cc01} show that all solutions are identically $0$ for $t>2$. The case where $W=\frac{2\alpha}{x}$ on $[0,1]$ for $\alpha$ a constant was studied in \cite{fhs20}. The authors showed that although the finite extinction time behavior is unique to $\alpha=1$, in general all solutions decay exponentially and for $\alpha \in \Nb$ most solutions have a finite extinction time. See also \cite{maj74}. Unbounded damping has also been studied on non-compact manifolds in \cite{fst18}, \cite{Gerhat2022} and \cite{Arnal2022}.

\subsection{Paper outline} 
In Section \ref{semigroupsection}, we rigorously formulate the strongly continuous damped wave semigroup $e^{t\Ac}:\Hc\rightarrow\Hc$ on $\Hc=H^1\times L^2$. The stability of $e^{t\Ac}$ is related to the $L^2$-resolvent estimates for $P_\lambda=-\Delta-i\lambda W-\lambda^2$, the family of stationary damped wave operators in $\lambda\in \mathbb{C}$. This is characterised by the next proposition: 
\begin{proposition}[Equivalence between resolvent estimates and decay]\label{polyprop}
\begin{enumerate}[wide]
The following are true:
\item Fix $\alpha>0$. There exists $C>0$ such that
\begin{equation}
E(u,t)^{1/2} \leq C\langle t\rangle^{-\alpha}\left(\|\nabla u_0\|_{L^2}+\|u_1\|_{H^1}+\|-\Delta u_0+Wu_1\|_{L^2}\right),
\end{equation}
for all solutions $u$ with initial data $(u_0,u_1) \in D$, if and only if, there exists $C>0$ such that for all $\lambda\in \mathbb{R}$ and $\lambda\neq0$ we have $\| \Pli \|_{L^2 \ra L^2} \leq C |\lambda|^{1/\alpha -1}$.
\item There exist $C, c>0$ such that $E(u,t) \leq C e^{-ct} E(u,0)$, 
for all solutions $u$ with initial data $(u_0,u_1) \in H^1\times L^2$, if and only if there exist $C>0$ such that for all $\lambda\in \mathbb{R}$ and $\lambda\neq 0$ we have $\| \Pli \|_{L^2 \ra L^2} \leq C/\abs{\lambda}$.
\end{enumerate}
\end{proposition}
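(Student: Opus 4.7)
The plan is to interpret both statements as equivalent formulations of resolvent-to-semigroup estimates for the damped wave generator $\mathcal{A}(u,v)=(v,\Delta u-Wv)$ on $\mathcal{H}=H^1\times L^2$ with domain $\mathcal{D}(\mathcal{A})=\mathcal{D}$. From the semigroup section I take as given that $\mathcal{A}$ is a closed, maximally dissipative generator of a bounded $C_0$-semigroup $e^{t\mathcal{A}}$; the dissipativity is automatic since $\operatorname{Re}\langle\mathcal{A}(u,v),(u,v)\rangle_{\mathcal{H}}=-\int W|v|^2\le 0$. After quotienting out the one-dimensional space of constant-in-$u$ initial data (preserved by the flow and invisible to the norms on both sides), I may assume $0\in\rho(\mathcal{A})$ and that the energy $E(u,t)^{1/2}$ is equivalent to $\|e^{t\mathcal{A}}(u_0,u_1)\|_{\mathcal{H}}$.

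The first technical step is to identify the resolvent of $\mathcal{A}$ on the imaginary axis with $P_\lambda^{-1}$. A short calculation solving $(\mathcal{A}+i\lambda)(u,v)=(f,g)$ gives $v=f-i\lambda u$ and
\begin{equation}
P_\lambda u=i\lambda f-Wf-g.
\end{equation}
From this, I would derive two-sided bounds of the schematic form
\begin{equation}
c|\lambda|\,\|P_\lambda^{-1}\|_{L^2\to L^2}-C\le \|(\mathcal{A}+i\lambda)^{-1}\|_{\mathcal{H}\to\mathcal{H}}\le C(|\lambda|+1)\|P_\lambda^{-1}\|_{L^2\to L^2}+C,
\end{equation}
for $|\lambda|$ bounded away from $0$. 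The lower bound comes from testing against $(f,g)=(0,g)$ with $g\in L^2$, while the upper bound comes from estimating each of the three terms in the formula for $u$ and then recovering $\|v\|_{L^2}$ and $\|\nabla u\|_{L^2}$ using elliptic regularity for $P_\lambda$. Consequently, $\|P_\lambda^{-1}\|_{L^2\to L^2}\lesssim|\lambda|^{1/\alpha-1}$ on $\mathbb{R}$ is equivalent to $\|(\mathcal{A}+i\lambda)^{-1}\|_{\mathcal{H}\to\mathcal{H}}\lesssim |\lambda|^{1/\alpha}$ for $|\lambda|\to\infty$, and $\|P_\lambda^{-1}\|\lesssim |\lambda|^{-1}$ is equivalent to uniform boundedness of the resolvent on $i\mathbb{R}$.

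The second step is to invoke the classical semigroup characterizations. For part (1), the Borichev--Tomilov theorem says that for a bounded $C_0$-semigroup on a Hilbert space with $i\mathbb{R}\subset\rho(\mathcal{A})$,
\begin{equation}
\|(\mathcal{A}+i\lambda)^{-1}\|_{\mathcal{H}\to\mathcal{H}}=\mathcal{O}(|\lambda|^{1/\alpha})\iff \|e^{t\mathcal{A}}\mathcal{A}^{-1}\|_{\mathcal{H}\to\mathcal{H}}=\mathcal{O}(t^{-\alpha}).
\end{equation}
Writing $(u_0,u_1)=\mathcal{A}^{-1}\mathcal{A}(u_0,u_1)$ for $(u_0,u_1)\in\mathcal{D}$, the right-hand bound is exactly $\|e^{t\mathcal{A}}(u_0,u_1)\|_{\mathcal{H}}\lesssim t^{-\alpha}\|\mathcal{A}(u_0,u_1)\|_{\mathcal{H}}$, and $\|\mathcal{A}(u_0,u_1)\|_{\mathcal{H}}^2=\|u_1\|_{H^1}^2+\|{-\Delta u_0+Wu_1}\|_{L^2}^2$ matches the norm on the right of \eqref{e:endec} once $\|\nabla u_0\|_{L^2}$ is absorbed from $\|(u_0,u_1)\|_{\mathcal{H}}$. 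For part (2), the Gearhart--Pr\"uss--Huang theorem gives the analogous equivalence between exponential decay and uniform boundedness of the resolvent on $i\mathbb{R}$, with initial data in all of $\mathcal{H}=H^1\times L^2$.

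The main obstacle I expect is domain bookkeeping in the presence of the unbounded weight $W$. The formula $P_\lambda u=i\lambda f-Wf-g$ only makes sense in $L^2$ once $Wf\in L^2$, which is not automatic for $f\in H^1$. I would handle this either by passing to the truncations $W_n=\min(W,n)$ and taking $n\to\infty$, or by observing that the resolvent of $\mathcal{A}$ is defined abstractly and one only needs the identification on a dense subspace (say $\mathcal{D}(\mathcal{A}^2)$) to obtain the resolvent norm. Relatedly, one must verify that $\mathcal{A}$ is indeed closed with exactly the domain $\mathcal{D}$ of Definition \ref{1l4}; with that in hand, the rest of the argument is a standard application of Borichev--Tomilov and Gearhart--Pr\"uss.
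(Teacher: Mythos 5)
Your proposal follows the same high-level route as the paper: identify $\ker\mathcal{A}=\operatorname{span}\{(1,0)\}$ and pass to the complement, tie $(\mathcal{A}+i\lambda)^{-1}$ to $P_\lambda^{-1}$ via the explicit $2\times 2$ matrix of resolvents, and then invoke Borichev--Tomilov (for part 1) and Gearhart--Pr\"uss--Huang (for part 2). The paper implements the same plan through the Riesz projectors $\Pi_0,\Pi_\bullet$ (Section 2.3), Lemma \ref{2t8} and \ref{2t11} for the two-sided resolvent equivalence, and Lemma \ref{2t1} which packages it for real $\lambda$; your ``schematic'' two-sided bound is exactly what Lemma \ref{2t1} produces once you fill in the elliptic-regularity input from Proposition \ref{maplemma}(3) (which converts $\|P_\lambda^{-1}\|_{L^2\to H^1}$ and $\|P_{-\bar\lambda}^{-1}\|_{L^2\to H^1}$ back to $\|P_\lambda^{-1}\|_{L^2\to L^2}$, using that $-\bar\lambda=-\lambda$ on $\mathbb{R}$).

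Two small points are worth flagging. First, the dissipativity computation as you have written it is incorrect on $(\mathcal{H},\|\cdot\|_{\mathcal{H}})$: with $\|(u,v)\|^2_{\mathcal{H}}=\|u\|_{H^1}^2+\|v\|_{L^2}^2$ one actually gets $\operatorname{Re}\langle\mathcal{A}(u,v),(u,v)\rangle_{\mathcal{H}}=\operatorname{Re}\langle v,u\rangle_{L^2}-\|\sqrt{W}v\|^2$, which has the wrong sign in general; the clean identity $\operatorname{Re}\langle\dot{\mathcal{A}}(u,v),(u,v)\rangle_{\dot{\mathcal{H}}}=-\int W|v|^2$ holds only after you pass to $\dot{\mathcal{H}}$ with the energy norm $\|\nabla u\|^2+\|v\|^2$, which is exactly what the paper's decomposition sets up (Proposition \ref{2t12}). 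Your ``quotienting'' remark is the right instinct, but the dissipativity should be stated on $\dot{\mathcal{H}}$, not on $\mathcal{H}$. Second, for the domain issue around $Wf$, the paper does not use a truncation $W_n=\min(W,n)$; instead it keeps $P_\lambda$ as a map $H^1\to H^{-1}$ throughout (so that $Wf\in H^{-(1/2)-}\subset H^{-1}$ by Lemma \ref{2t3}) and writes the resolvent as in \eqref{2l25}, $u=-i\lambda^{-1}f-i\lambda^{-1}P_\lambda^{-1}\Delta f-P_\lambda^{-1}g$, which only applies $P_\lambda^{-1}$ to $\Delta f\in H^{-1}$ and $g\in L^2$ and sidesteps $Wf$ entirely. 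Your truncation idea would require a compactness/convergence argument that the paper avoids; the $H^{-1}$-based bookkeeping is cleaner. None of this changes the overall structure --- your proposal is substantively the paper's proof.
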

Here, the exponential decay resolvent estimate result can be thought of as a refinement of the polynomial result when $\alpha\rightarrow\infty$. We show there are pole-free regions of $P_\lambda$ in Propositions \ref{maplemma} and \ref{2t6}, and use them to prove Theorem \ref{backwardthm}, that $e^{t\Ac}$ is backward unique. We also prove Theorem \ref{controlthm} that the Schr\"odinger observability gives polynomial decay. 

In Section \ref{resolventsection}, we prove the necessary resolvent estimates for $P_\lambda$ using a Morawetz multiplier method. We then use Proposition \ref{polyprop} to prove Theorem \ref{torusthm} giving polynomial decay on $\mathbb{T}^2$, and Theorem \ref{circlethm} giving exponential decay on $\mathbb{S}^1$. In Section \ref{sharpnesssection}, we construct eigenfunctions for the one-dimensional Schr\"odinger operator with a complex Coulomb potential, and quasimodes for the damped wave operator to prove Theorem \ref{sharpnessthm}, the sharpness result.

\subsection{Acknowledgement}
The authors are grateful to Jared Wunsch for many discussions around these results, and to Jeff Galkowski for pointing out an improvement to the Sobolev multiplier estimates in Section \ref{semigroupsection}. The authors are grateful to Romain Joly, Irena Lasiecka, Jeffrey Rauch, Cyril Letrouit and Pedro Freitas for insightful comments. RPTW is partially supported by NSF grant DMS-2054424. 

\section{Semigroups generated by normally \texorpdfstring{$L^p$}{Lp}-damping}\label{semigroupsection}
\begin{figure}
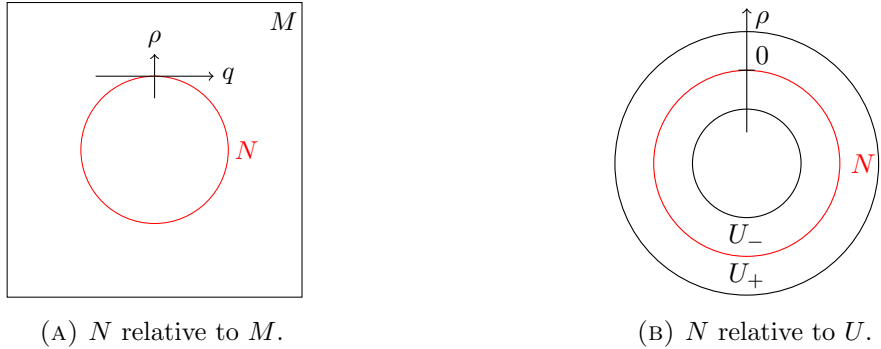

\centering

\subfloat[$N$ relative to $M$.]{\parbox{0.5\linewidth}{\centering \includegraphics[page=5,height=10em]{f1.pdf}}

}
\subfloat[$N$ relative to $U$.]{\parbox{0.5\linewidth}{\centering\includegraphics[page=6,height=10em]{f1.pdf}}

}
\caption{The normal structure near the hypersurface $N$ inside $M$.}
\label{f4}
\end{figure}
In this section, we provide a general framework for the damped wave semigroup, with damping $W$, unbounded near a closed hypersurface. We then use this semigroup to prove Proposition \ref{polyprop}.
\subsection{Normally \texorpdfstring{$L^p$}{Lp}-damping and resolvent estimates}
Let $M$ be a compact smooth manifold without boundary. Let $N$ be a closed and orientable hypersurface in $M$, with finitely many components and an orientable normal bundle. Take a normal neighbourhood $U=U_-\sqcup N\sqcup U_+$ divided into two components $U_\pm$ by $N$. Denote $\rho\in C^\infty(U)$ by 
\begin{equation}
\rho(z)=
\begin{cases}
    \pm\operatorname{dist}(z, N), & z\in U_\pm\\
    0, & z\in N. 
\end{cases}
\end{equation}
There exists some small $\delta>0$ such that $N_\delta=\rho^{-1}((-\delta,\delta))$ is compactly embedded in $U$ and $\rho^{-1}(s)$ is diffeomorphic to $N$ for all $s\in (-\delta,\delta)$. We can then identify $N_\delta$ by $N\times (-\delta, \delta)_\rho$: see Figure \ref{f4} for illustration. 

\begin{definition}[Normally $L^p$-damping]\label{2t14}
Assume the damping function $W(z)\ge 0$ on $M$ and is $L^\infty$ on any compact subset of $M\setminus N$. For $p\in [1,\infty]$, we say $W(z)$ is normally $L^p$ (with respect to $N$) if
\begin{equation}
w(\rho)=\operatorname{esssup}\{W(q,\rho): q\in N\}\in L^{p}(-\delta, \delta).
\end{equation}

\end{definition}
The name comes from the fact that $W$ blows up near $N=\rho^{-1}(0)$ like $w(\rho)$, a function $L^p$-integrable along $\rho$, the fiber variable of the normal bundle to $N$. For $1\le p<q\le \infty$, any normally $L^q$-damping is also normally $L^p$. The class of normally $L^\infty$-damping coincide with $L^\infty(M)$. Throughout this section, being normally $L^p$ is the only assumption we impose on $W(z)$: we frequently draw a distinction between functions which are $L^1$ and $L^p$ for $p>1$. We give some important examples of normally $L^1$-damping that we use in other sections:

\begin{figure}
\centering
\subfloat[$W$ represented by the height of the plot on $\mathbb{T}^2$.]{
\parbox{0.45\linewidth}{\centering\includegraphics[height=10em]{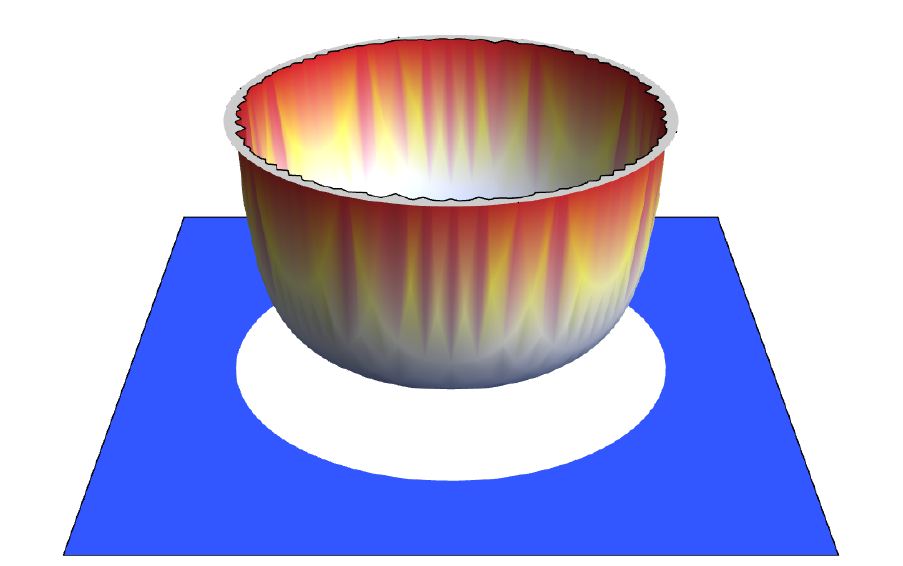}}

}\hfill
\subfloat[$W$ represented by how dark the color is on $\mathbb{S}^2$.]{
\parbox{0.45\linewidth}{\centering\includegraphics[height=10em]{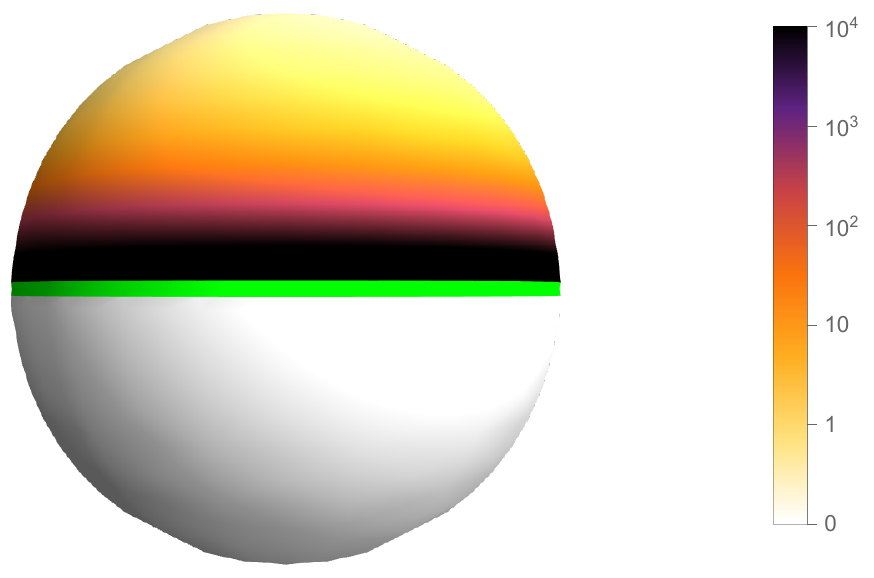}}
}
\caption{Examples of normally $L^p$-damping $W$ on manifolds.}
\label{f2}
\end{figure}

\begin{example}
\begin{enumerate}[wide]
\item Let $M=\mathbb{T}^2$ and $\beta\in (-1,0)$. Then $V^\beta(x)$ defined in \eqref{1l3} are normally $L^{-1/{\beta}-}$ with respect to $\{(x,y)\in \mathbb{T}^2: x=\pm \sigma\}$. They are also normally $L^1$. When $\beta\in [0,\infty)$, they are normally $L^\infty$: see Figure \ref{f1}(B). The same is true for $X^\beta_\theta$. 
\item Let $M=\mathbb{T}^2$, $\beta\in (-1,0)$ and $\epsilon>0$ small. Then $W=\mathbbm{1}_{\{x^2+y^2\le \epsilon^2\}}(\epsilon-(x^2+y^2)^{\frac{1}{2}})^{\beta}$ is normally $L^{-1/\beta-}$ with respect to $\{x^2+y^2=\epsilon^2\}$: see Figure \ref{f2}(A). 
\item Let $M=\mathbb{S}^2$ be equipped with the spherical coordinates $(\theta,\phi)\in [0,2\pi]\times [0,\pi]$. Let $\beta\in (-1,0)$, then $W=\mathbbm{1}_{\{\phi\le \pi/2\}}(\pi/2-\phi)^{\beta}$ is normally $L^{-1/\beta-}$ with respect to $\{\phi=\pi/2\}$, the equator: see Figure \ref{f2}(B).
\end{enumerate}
\end{example}
\begin{lemma}[Sobolev multiplier]\label{2t3}
Let $W$ be normally $L^p$ for $p\in (1,\infty)$. Then the multiplier $\sqrt{W}$ is a bounded map from $H^{\frac{1}{2p}}(M)$ to $L^2(M)$, and extends to a bounded map from $L^2(M)$ to $H^{-\frac{1}{2p}}(M)$. When $p=1$, $\sqrt{W}$ maps from $H^{\frac{1}{2}+}$ to $L^2(M)$ and extends to a bounded map from $L^2(M)$ to $H^{-\frac{1}{2}-}$. When $p=\infty$, $\sqrt{W}$ is bounded on $L^2(M)$.
\end{lemma}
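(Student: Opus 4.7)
The plan is to work near the hypersurface $N$ where $W$ blows up, and to combine a one-dimensional Sobolev embedding in the normal direction with the trivial $L^\infty$ bound for $W$ away from $N$. Fix a smooth cutoff $\chi$ supported in the normal neighbourhood $N_\delta$ and equal to $1$ on $N_{\delta/2}$. Then $(1-\chi)W \in L^\infty(M)$ by Definition \ref{2t14}, so multiplication by $\sqrt{(1-\chi)W}$ is bounded from $L^2(M)$ to itself, hence from $H^s(M)$ to $L^2(M)$ for all $s \geq 0$. It therefore suffices to control $\int_M \chi W |u|^2$.

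In coordinates $(q, \rho) \in N \times (-\delta, \delta)$ on $N_\delta$, the normally $L^p$ hypothesis gives $W(q, \rho) \leq w(\rho)$ with $w \in L^p(-\delta, \delta)$. For each fixed $q \in N$, H\"older's inequality with exponents $p$ and $p' = p/(p-1)$ yields
\begin{equation}
\int_{-\delta}^{\delta} W(q, \rho) |u(q, \rho)|^2 \, d\rho \leq \|w\|_{L^p(-\delta,\delta)} \, \|u(q, \cdot)\|_{L^{2p'}(-\delta, \delta)}^2 .
\end{equation}
The one-dimensional Sobolev embedding $H^{1/(2p)}(-\delta, \delta) \hookrightarrow L^{2p'}(-\delta, \delta)$, which is sharp and continuous since $2p' < \infty$ and $\tfrac{1}{2} - \tfrac{1}{2p'} = \tfrac{1}{2p}$, upgrades this to
\begin{equation}
\int_{-\delta}^{\delta} W(q, \rho) |u(q, \rho)|^2 \, d\rho \leq C \|u(q, \cdot)\|_{H^{1/(2p)}(-\delta, \delta)}^2 .
\end{equation}
Integrating in $q \in N$ and applying the anisotropic Sobolev estimate $\|u\|_{L^2_q H^{s}_\rho}^2 \leq C \|u\|_{H^{s}(M)}^2$ with $s = 1/(2p) \geq 0$ finishes the bound on $\int_M \chi W |u|^2$, yielding boundedness of $\sqrt{W}: H^{1/(2p)}(M) \to L^2(M)$.

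The extension $\sqrt{W}: L^2(M) \to H^{-1/(2p)}(M)$ is then immediate by duality, since multiplication by the nonnegative real function $\sqrt{W}$ is formally self-adjoint, so the adjoint of the first map is the second. For the endpoints: when $p = \infty$, $W \in L^\infty(M)$ gives the claim on $L^2$ trivially; when $p = 1$, I replace the embedding into $L^{2p'}$ by the strict one-dimensional embedding $H^{1/2 + \epsilon}(-\delta,\delta) \hookrightarrow L^\infty(-\delta,\delta)$, yielding $\sqrt{W}: H^{1/2+}(M) \to L^2(M)$ and by duality $\sqrt{W}: L^2(M) \to H^{-1/2-}(M)$.

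The main technical point to justify carefully is the anisotropic Sobolev step $H^s(M) \hookrightarrow L^2_q H^s_\rho$. On $\mathbb{R}^n = \mathbb{R}^{n-1}_q \times \mathbb{R}_\rho$ this is immediate from Plancherel using $(1 + \tau^2)^s \leq (1 + |\eta|^2 + \tau^2)^s$ for $s \geq 0$, where $(\eta, \tau)$ is dual to $(q, \rho)$. I would transfer this to $N_\delta \subset M$ via a finite partition of unity on $N$ together with local coordinate charts that are product with $\rho$; the lower-order metric corrections arising from curvature of $N$ only involve tangential derivatives and can be absorbed into $\|u\|_{H^s(M)}$ since $s \leq 1$ throughout the relevant range.
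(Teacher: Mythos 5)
Your proof is correct and follows essentially the same approach as the paper: split off the region away from $N$ where $W$ is bounded, then near $N$ bound $W$ by the $q$-independent envelope $w(\rho)$, apply H\"older in $\rho$ against $\|w\|_{L^p}$, and control the resulting $L^{2p'}$ norm in $\rho$ by the fractional Sobolev embedding with exponent $\frac{1}{2p}$, finishing by duality. The only cosmetic difference is bookkeeping: you apply H\"older in $\rho$ at each fixed $q$, integrate in $q$, and then invoke the anisotropic estimate $H^s(N_\delta) \hookrightarrow L^2_q H^s_\rho$, whereas the paper integrates in $q$ first and uses the vector-valued embedding $H^{\frac{1}{2p}}(N_\delta) \hookrightarrow L^{2p'}_\rho L^2_q$ directly; these are equivalent, and your Plancherel justification of the anisotropic step is a reasonable way to make the latter explicit.
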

\begin{proof}
1. Let $p\in (1,\infty)$. We show the multiplier $\sqrt{W}$ is bounded from $H^{\frac{1}{2p}}$ to $L^2$. Since $W\in L^\infty(M\setminus N_\delta)$, it suffices to show 
\begin{equation}
\|\sqrt{W}u\|_{L^2(N_\delta)}\le C\|u\|_{H^{\frac{1}{2p}}(N_\delta)}.
\end{equation}
By the Sobolev embedding we have
\begin{equation}
u\in H^{\frac{1}{2p}}(N_\delta)\hookrightarrow L^{\frac{2p}{p-1}}\left( (-\delta,\delta)_\rho, L^2(N)\right).
\end{equation}
Therefore 
\begin{equation}
\|\sqrt{W} u\|_{L^2(N_\delta)}^2\le C \int_{-\delta}^\delta w(\rho)\int_{N} \abs{u(q, \rho)}^2\ dqd\rho \le C\|w\|_{L^{p}_\rho}\|u\|_{L^{\frac{2p}{p-1}}((-\delta, \delta), L^2(N))}
\end{equation}
is bounded by $\|u\|_{H^{\frac{1}{2p}}(N_\delta)}$. 

2. Let $p=1$ and $s>\frac{1}{2}$. By the Sobolev embedding we have
\begin{equation}
u\in H^{s}(N_\delta)\hookrightarrow C^{0,0}\left( (-\delta,\delta)_\rho, L^2(N)\right).
\end{equation}
Therefore 
\begin{equation}
\|\sqrt{W} u\|_{L^2(N_\delta)}^2\le C \int_{-\delta}^\delta w(\rho)\int_{N} \abs{u(q, \rho)}^2\ dqd\rho \le C\|w\|_{L^1_\rho}\|u\|_{L^\infty((-\delta, \delta), L^2(N))}
\end{equation}
is bounded by $\|u\|_{H^{s}(N_\delta)}$. 

3. Let $p=\infty$. Then $W\in L^\infty(M)$ and $\|\sqrt{W}u\|_{L^2(M)}\le C\|u\|_{L^2(M)}$. Thus for all $p\in [1,\infty]$ we have the desired conclusion. 

4. It suffices to observe that for bounded $\sqrt{W}: H^s\rightarrow L^2$, its adjoint $\sqrt{W}^*: L^2\rightarrow H^{-s}$ is bounded as well. 
\end{proof}

\begin{lemma}\label{2t4}
Let $W$ be normally $L^1$. Then for any $u\in L^2(M)$ with $\supp u\subset M\setminus N$, we have $Wu\in L^2(M)$. 
\end{lemma}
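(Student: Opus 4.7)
The plan is to exploit compactness of $M$ together with the local $L^\infty$ hypothesis on $W$ away from $N$. First I would observe that since $M$ is compact, any closed subset of $M$ is compact; in particular $\supp u$ is a compact subset of $M$. The hypothesis $\supp u \subset M \setminus N$ together with the fact that $N$ is closed means $\supp u$ and $N$ are disjoint closed sets, and since both are compact, they have strictly positive distance in $M$.

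Next I would choose a small $\eta > 0$ so that the open set $K_\eta = \{z \in M : \operatorname{dist}(z, \supp u) < \eta\}$ still satisfies $\overline{K_\eta} \cap N = \emptyset$; then $\overline{K_\eta}$ is a compact subset of $M \setminus N$. By Definition \ref{2t14}, $W$ is $L^\infty$ on $\overline{K_\eta}$, so there exists $C>0$ with $W \leq C$ almost everywhere on $\supp u$.

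Finally I would estimate directly:
\begin{equation}
\|Wu\|_{L^2(M)}^2 = \int_{\supp u} \abs{W(z)}^2 \abs{u(z)}^2 \, dz \leq C^2 \|u\|_{L^2(M)}^2 < \infty,
\end{equation}
which gives $Wu \in L^2(M)$.

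There is essentially no obstacle in this argument; it is a routine combination of compactness of $M$ with the definition of normally $L^p$-damping. The only thing to be mindful of is that the normally $L^1$ assumption plays no additional role here beyond the background assumption that $W \in L^\infty$ on compacta of $M \setminus N$ — the $L^1$ radial profile condition is irrelevant once $\supp u$ stays uniformly away from the singular locus $N$.
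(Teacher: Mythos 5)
Your proposal is correct and follows the same approach as the paper: since $\supp u$ is a compact subset of $M\setminus N$, $W$ is essentially bounded there by the normally $L^p$ assumption, and the estimate follows immediately. The paper's proof is a one-line version of exactly this argument, and it likewise notes that no uniform multiplier bound is available.
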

\begin{proof}
It suffices to observe that $W$ is essentially bounded on $\supp u$ as a compact subset of $M\setminus N$. Note that we cannot give an uniform multiplier estimate, unless $w(\rho)\in \abs{\rho}^{-1}L^\infty(-\delta,\delta)$. 
\end{proof}

\begin{figure}
\centering
\subfloat[Normally $L^p$-damping.]{
\resizebox{0.48\linewidth}{!}{
\begin{tikzpicture}[scale=2.6, minimum size=0.01cm]
\path [pattern=north west lines] (2,0) -- (2,1) -- (-2,1) -- (-2,0) -- (-0.05, 0) arc (180:0:0.05) -- cycle;
\draw [->] (-2,0) -- (-0.05,0) -- (-0.05, 0) arc (180:0:0.05) -- (2,0); 
\node at (0,0) [circle,fill,inner sep=1pt]{};

\draw plot [domain=2:0.5, smooth, variable=\x] ({-\x},{sqrt(0.5)-0.5-sqrt(\x)}) -- (0.5,-0.5) -- plot [domain=0.5:2, smooth, variable=\x] ({\x},{sqrt(0.5)-0.5-sqrt(\x)});

\path [pattern=north west lines] plot [domain=2:0.5, smooth, variable=\x] ({-\x},{sqrt(0.5)-0.5-sqrt(\x)}) -- (0.5,-0.5) -- plot [domain=0.5:2, smooth, variable=\x] ({\x},{sqrt(0.5)-0.5-sqrt(\x)}) -- cycle;

\node at (0,-0.1) {$0$};

\node [fill=white, draw=black] at (0, 0.5) {Proposition \ref{maplemma}};
\node [fill=white, draw=black] at (0, -0.7) {Proposition \ref{2t6}};

\node [label=right:$|\operatorname{Re}{\lambda}|\sim |\operatorname{Im}\lambda|^{p-}$] at (0.7, -0.6) {};
\node [circle, fill,inner sep=0.4mm, outer sep=0pt, red] at (0.2, -0.85) {};
\draw [->, red, line width=0.25mm] (0.2, -0.85) -- (0.5,-1.2);
\node [circle, fill,inner sep=0.4mm, outer sep=0pt, red] at (-0.2, -0.85) {};
\draw [->, red, line width=0.25mm] (-0.2, -0.85) -- (-0.5,-1.2);
\end{tikzpicture}}
}\hfill
\subfloat[$L^\infty$-damping.]{
\includegraphics[page=4,width=0.48\linewidth]{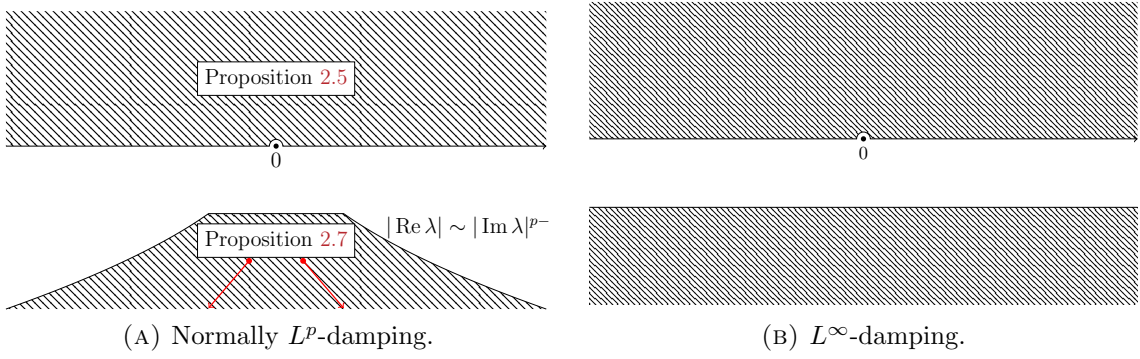}
}
\caption{Pole-free regions of $P_\lambda$ for normally $L^p$-damping in $\lambda\in\mathbb{C}$.}
\label{f3}
\end{figure}

We now do some spectral analysis of $P_\lambda=-\Delta-i\lambda W-\lambda^2$. We say $P_\lambda$ has a pole at $\lambda\in\mathbb{C}$ if $P_\lambda: H^1\rightarrow H^{-1}$ fails to be invertible. We show in Proposition \ref{maplemma}, when $p\in [1,\infty]$, we have no poles in the upper half plane. We show in Proposition \ref{2t6}, when $p\in (1,\infty)$, we have no poles in some regions in the lower half plane that shrink as $p\rightarrow1^+$. The pole-free region of normally $L^p$-damping in the lower half plane is asymptotically smaller than that of $L^\infty$-damping: see Figure \ref{f3}. 

We begin with the upper half plane:
\begin{proposition}[Pole-free region in the upper half plane]\label{maplemma}
Let $W$ be normally $L^1$. For $\lambda\in \mathbb{C}$, consider $\Pl=-\Delta-i\lambda W-\lambda^2$ as a bounded operator from $H^1$ to $H^{-1}$. 
Then the following are true:
\begin{enumerate}[wide]
\item $\Pl: H^{1} \rightarrow H^{-1}$ is bijective on $\lambda\in \{\cim \lambda\ge 0, \lambda\neq 0\}$. 
\item There is $C>0$ such that for all $\lambda\in \{\cim\lambda>0\}$, we have
\begin{equation}\label{2l20}
\|P_\lambda^{-1}\|_{L^2\rightarrow H^1}\le \frac{C}{\cim\lambda}, \ \|P_\lambda^{-1}\|_{L^2\rightarrow L^2}\le \frac{C}{\abs{\cim\lambda}^2}.
\end{equation}
\item There is $C>0$ such that for for any $u\in H^1$ and any $\lambda\in \{\cim\lambda\ge0\}$, we have
\begin{equation}
\|u\|_{H^1}^2\le C\langle \cre\lambda\rangle^2\|u\|_{L^2}^2+C\|P_\lambda u\|_{H^{-1}}^2.
\end{equation} 
\item For any $\lambda\in\mathbb{C}$, $P_\lambda:H^1\rightarrow H^{-1}$ is bijective if and only if $P_{-\bar\lambda}:H^1\rightarrow H^{-1}$ is bijective: $\spec P_\lambda$ is symmetric about the imaginary axis. 
\item At $\lambda=0$, $P_0=-\Delta$ has a simple pole. $P_0$ is surjective from $H^1$ to $\{f\in H^{-1}: \langle f,1\rangle=0\}$, and $\ker P_0=\spa \{1\}$. 
\end{enumerate}
\end{proposition}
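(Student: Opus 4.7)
The plan is to combine energy identities for the sesquilinear form associated to $P_\lambda$ with Fredholm theory and a unique continuation argument.

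First, I would establish that $P_\lambda: H^1 \to H^{-1}$ is Fredholm of index zero. By Lemma \ref{2t3}, $\sqrt W: H^{1/2p} \to L^2$ extends by duality to $\sqrt W: L^2 \to H^{-1/2p}$, so $W = \sqrt W\cdot\sqrt W$ is bounded $H^{1/2p} \to H^{-1/2p}$ (with the natural adjustment at $p=1$ or $p=\infty$). Composing with the compact embeddings $H^1 \hookrightarrow H^{1/2p}$ and $H^{-1/2p} \hookrightarrow H^{-1}$ shows $W: H^1 \to H^{-1}$ is compact. Writing $P_\lambda = (-\Delta + 1) + (-1 - \lambda^2 - i\lambda W)$, the first summand is an isomorphism while the second is compact (the scalar part factors through $H^1 \hookrightarrow L^2 \hookrightarrow H^{-1}$), so $P_\lambda$ is Fredholm of index zero for every $\lambda \in \mathbb{C}$.

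For Parts (2) and (3), I would pair $P_\lambda u = f$ with $\bar u$ and extract real and imaginary parts to obtain
\begin{align}
\Re\langle f, u\rangle &= \|\nabla u\|_{L^2}^2 + \Im(\lambda)\|\sqrt W u\|_{L^2}^2 + ((\Im\lambda)^2 - (\Re\lambda)^2)\|u\|_{L^2}^2,\\
\Im\langle f, u\rangle &= -\Re(\lambda)\|\sqrt W u\|_{L^2}^2 - 2\Re(\lambda)\Im(\lambda)\|u\|_{L^2}^2.
\end{align}
When $\Im\lambda \geq 0$, non-negativity of the $W$-term and of $(\Im\lambda)^2\|u\|^2$ in the real identity gives $\|\nabla u\|^2 \leq (\Re\lambda)^2\|u\|^2 + \|P_\lambda u\|_{H^{-1}}\|u\|_{H^1}$, and an AM--GM absorption of $\tfrac12\|u\|_{H^1}^2$ yields Part (3). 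For Part (2) with $\Im\lambda > 0$, I would split into $|\Re\lambda| \leq \tfrac12\Im\lambda$ (where the coefficient $(\Im\lambda)^2 - (\Re\lambda)^2 \gtrsim (\Im\lambda)^2$ directly controls $\|u\|_{L^2}$) and $|\Re\lambda| \geq \tfrac12\Im\lambda$ (where dividing the imaginary identity by $|\Re\lambda|$ bounds $\Im\lambda\|u\|^2$), obtaining $\|u\|_{L^2} \lesssim \|f\|_{L^2}/(\Im\lambda)^2$ in both regimes. The $H^1$ bound then follows by feeding the $L^2$ bound back into the real identity.

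Part (1) for $\Im\lambda > 0$ is immediate from the $L^2 \to L^2$ bound of Part (2) (which gives injectivity), combined with the Fredholm-of-index-zero property. For $\lambda \in \mathbb{R}\setminus\{0\}$, the imaginary identity with $f = 0$ forces $\sqrt W u = 0$ a.e.; hence $Wu = 0$, $u$ satisfies $-\Delta u = \lambda^2 u$ on $M$, and $u$ vanishes on the positive-measure set $\{W > 0\}$, so strong unique continuation for eigenfunctions of the Laplacian gives $u \equiv 0$. Part (4) is immediate from the identity $\overline{P_\lambda u} = P_{-\bar\lambda}\bar u$. Part (5) is classical theory for $-\Delta$ on a closed manifold; the simplicity of the pole of $P_\lambda^{-1}$ at $\lambda = 0$ can be verified by a Grushin/Lyapunov--Schmidt reduction: the derivative $\partial_\lambda P_\lambda|_{\lambda=0} = -iW$ pairs with the cokernel generator $1$ to give $-i\int W \neq 0$ under the standing assumption $W \not\equiv 0$, ruling out higher-order poles.

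The main obstacle is the unique continuation step in Part (1): since $\{W > 0\}$ need only be a measurable set, one has to invoke strong (measure-theoretic) unique continuation for the Laplace eigenvalue equation rather than the classical open-set version, which suffices in all cases of interest in this paper. A secondary concern is tracking the compactness of $W: H^1 \to H^{-1}$ uniformly across all three regimes of Lemma \ref{2t3}, especially at the endpoints $p = 1$ and $p = \infty$.
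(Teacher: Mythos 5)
Your proposal is correct and follows essentially the same strategy as the paper: establish Fredholm index zero via a coercive reference operator plus a compact perturbation of the form $H^1\to H^{-\frac12-}\hookrightarrow H^{-1}$, then pair $P_\lambda u$ with $u$ and read off real and imaginary parts for the a priori estimates, and invoke measure-theoretic unique continuation to kill the kernel on the real axis. The only organizational differences from the paper are cosmetic: the paper uses $P_i=-\Delta+1+W$ (rather than $-\Delta+1$) as the invertible reference operator, it works with $\ker P_\lambda^*=\ker P_{-\bar\lambda}$ rather than $\ker P_\lambda$, and for part (2) it splits on $\Re\lambda=0$ versus $\Re\lambda\neq0$ instead of your $|\Re\lambda|\lessgtr\frac12\Im\lambda$ split — all equivalent in outcome. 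Your Grushin computation $-i\int_M W\neq 0$ (which requires $W\not\equiv 0$, as it must: the pole is double when $W\equiv 0$) is a genuine, more direct justification of the simple-pole claim in part (5); the paper does not prove it inside this proposition and instead establishes the equivalent fact at the level of $\mathcal A$ in Lemma \ref{2t9}(4). Your flagged concern about needing the strong (positive-measure) unique continuation principle for $-\Delta u=\lambda^2 u$ is legitimate: the paper invokes it without citation, and it is needed because $\{W>0\}$ is only measurable; this is standard for second-order elliptic operators with smooth coefficients and is not a gap.
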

\begin{proof}
1. First we will show that $\Pl$ is Fredholm with index $0$ for all $\lambda\in \mathbb{C}$. Note
\begin{equation}
\left\langle P_{i}u,v \right\rangle=\left\langle \left(-\Delta+1+W\right)u, v\right\rangle,
\end{equation}
is a coercive form on $H^1$. Indeed we have
\begin{equation}
\abs{\left\langle P_{i}u,v \right\rangle}\le\abs{\langle \nabla u, \nabla v\rangle+\langle u,v\rangle+\langle \sqrt{W}u,\sqrt{W} v\rangle}\le C\|u\|_{H^1}\|v\|_{H^1}. 
\end{equation}
and
\begin{equation}
\left\langle P_{i}u,u \right\rangle=\|u\|_{H^1}^2+\|\sqrt{W}u\|^2\ge \|u\|_{H^1}^2.
\end{equation}
By the Lax-Milgram theorem, we have
\begin{equation}
P^{-1}_{i}: H^{-1}\rightarrow H^1,
\end{equation}
is bounded. Then $P_{i}$ is Fredholm with index $0$. Now note
\begin{equation}
\Pl=P_{i}-\left(1+\lambda^2\right)-\left(1+i\lambda\right)W,
\end{equation}
and that $\left(1+\lambda^2\right)+\left(1+i\lambda\right)W: H^1 \rightarrow H^{-\frac{1}{2}-}\hookrightarrow H^{-1}$ compactly from Lemma \ref{2t3}. Thus $\Pl$ is Fredholm with index 0 for all $\lambda \in \mathbb{C}$. This also implies that $P_\lambda$ is bijective if and only if $P^*_\lambda=P_{-\bar\lambda}$ is bijective from $H^1$ to $H^{-1}$. 

2. We show that $\Pl^*: H^1 \rightarrow H^{-1}$ has trivial kernel in the upper half plane. For $\cim\lambda \geq 0, \lambda \neq 0$ let $\lambda=\alpha+i\beta$. Consider
\begin{equation}
\Pl^* u=P_{-\bar{\lambda}} u=\left(-\Delta +\left(\beta^2+\beta W-\alpha^2\right)+i\alpha(W+2\beta)\right)u =0.
\end{equation}
Pair it with $u$ to see
\begin{equation}\label{eq:kernel}
\left\langle \Pl^* u, u\right\rangle=\|\nabla u\|^2+\left(\beta^2-\alpha^2\right)\|u\|^2+\beta\|\sqrt{W}u\|^2+i\left(\alpha\|\sqrt{W}u\|^2+2\alpha\beta\|u\|^2\right) =0. 
\end{equation}
Suppose $\beta>0$. When $\alpha\neq 0$, the imaginary part of \eqref{eq:kernel} implies $\|u\|=0$. When $\alpha=0$, the real part of \eqref{eq:kernel} implies $\|u\|=0$. When $\beta=0$ and $\alpha\neq 0$, \eqref{eq:kernel} is reduced to 
\begin{equation}
\|\nabla u\|^2-\alpha^2\|u\|^2-i\alpha\|\sqrt{W}u\|^2=0,
\end{equation}
and $\|\sqrt{W}u\|=0$. From the unique continuation we know $u\equiv 0$ almost everywhere. Now since $\ker{P_\lambda^*}$ is trivial and $P_\lambda$ has index $0$, we know $\operatorname{CoKer}P_\lambda$ is trivial and $P_\lambda$ is invertible. 

3. Let $\lambda=\alpha+i\beta$ with $\beta>0$. Consider 
\begin{equation}
P_{\lambda} u=(-\Delta+(\beta^2-\alpha^2+\beta W)-i\alpha(W+2\beta))u=f. 
\end{equation}
Pair it with $u$ to observe
\begin{equation}\label{2l21}
\langle P_\lambda u, u\rangle=\|\nabla u\|^2+(\beta^2-\alpha^2)\|u\|^2+\beta\|\sqrt{W}u\|^2-i\alpha(2\beta\|u\|^2+\|\sqrt{W}u\|^2). 
\end{equation}
When $\alpha=0$, we have
\begin{equation}
\|\nabla u\|^2+\beta^2\|u\|^2\le \frac{1}{2}\beta^{-2}\|f\|+\frac{1}{2} \beta^2\|u\|^2,
\end{equation}
the absorption of the last term gives \eqref{2l20}. When $\alpha\neq 0$, take the imaginary part to see
\begin{equation}
2\abs{\alpha}\beta\|u\|^2+\abs{\alpha}\|\sqrt{W}u\|^2\le \abs{\alpha}\beta\|u\|^2+C\beta^{-1}\abs{\alpha}^{-1}\|f\|^2,
\end{equation}
the absorption of the first term on the right gives
\begin{equation}\label{2l22}
\|u\|^2\le C\beta^{-2}\abs{\alpha}^{-2}\|f\|^2.
\end{equation}
The real part of \eqref{2l21} reads
\begin{equation}
\|\nabla u\|^2+\beta^2\|u\|^2+\beta\|\sqrt{W}u\|^2\le C\epsilon^{-1}\beta^{-2}\|f\|^2+\alpha^2\|u\|^2+\epsilon \beta^2\|u\|^2.
\end{equation}
Absorb the last term on the right and bring in \eqref{2l22} to see
\begin{equation}
\|\nabla u\|^2+\beta^2\|u\|^2\le C\beta^{-2}\|f\|^2,
\end{equation}
which is \eqref{2l20}. 

4. Since $\beta\ge0$, the real part of \eqref{2l21} gives
\begin{equation}
\|\nabla u\|^2+(\beta^2-\alpha^2)\|u\|^2\le C\epsilon^{-1}\|f\|_{H^{-1}}^2+\epsilon \|u\|_{H^1}^2,
\end{equation}
and the absorption of the last term gives
\begin{equation}
\|u\|_{H^1}^2\le C\langle \alpha\rangle^2\|u\|_{L^2}^2+C\|f\|_{H^{-1}}^2,
\end{equation}
where $C$ does not depend on $\beta$. 

5. Let $\lambda=0$. Then $\ker P_0^*=\ker P_0=\ker(-\Delta)=\spa \{1\}$, and $P_0$ is surjective from $H^1$ to $(\ker P_0^*)^{\perp}=\{f\in H^{-1}:\langle f, 1\rangle=0\}$.
\end{proof}

We now look at the lower half plane:

\begin{lemma}[Interpolation inequality]\label{2t5}
Let $W$ be normally $L^p$ for $p\in[1,\infty)$, then for all $r\in (\frac{1}{2p},1)$ there exists $C>0$ such that for all $\gamma>0$,
\begin{equation}
\|\sqrt{W}u\|_{L^2}\le \gamma\|u\|_{L^2}+C\gamma^{\frac{1-r}{-r}}\|u\|_{H^1}. 
\end{equation}
Note that $C$ does not depend on $\gamma, u$. 
\end{lemma}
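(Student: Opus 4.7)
The plan is a three-step chain: first bound $\sqrt{W}$ as a multiplier from a low-order Sobolev space into $L^2$ using Lemma \ref{2t3}, then interpolate that Sobolev norm between $L^2$ and $H^1$, and finally convert the resulting product into the desired $\gamma$-dependent split via Young's inequality.

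First I would invoke Lemma \ref{2t3}. For $p\in (1,\infty)$ it gives $\sqrt{W}:H^{1/(2p)}\to L^2$ bounded, and for $p=1$ it gives $\sqrt{W}:H^s\to L^2$ bounded for any $s>\tfrac12$. Since $r\in (\tfrac{1}{2p},1)$, the continuous Sobolev embedding on the compact manifold $M$ (namely $H^r\hookrightarrow H^{1/(2p)}$ for $p>1$, or $H^r\hookrightarrow H^s$ for some $s\in (\tfrac12,r)$ when $p=1$) yields $\|\sqrt{W}u\|_{L^2}\le C_r\|u\|_{H^r}$, with $C_r$ independent of $u$.

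Next I would chain this with the standard interpolation inequality $\|u\|_{H^r}\le \|u\|_{L^2}^{1-r}\|u\|_{H^1}^r$, valid for $r\in [0,1]$, to obtain $\|\sqrt{W}u\|_{L^2}\le C_r\|u\|_{L^2}^{1-r}\|u\|_{H^1}^r$. To extract the parameter $\gamma$ I would then apply Young's inequality with conjugate exponents $p_0=\tfrac{1}{1-r}$ and $q_0=\tfrac{1}{r}$: writing the right-hand side as $\bigl(\mu\|u\|_{L^2}^{1-r}\bigr)\cdot\bigl(\mu^{-1}C_r\|u\|_{H^1}^r\bigr)$ for a parameter $\mu>0$ to be chosen, and applying $ab\le\tfrac{a^{p_0}}{p_0}+\tfrac{b^{q_0}}{q_0}$, yields a bound of the form
\[
(1-r)\,\mu^{1/(1-r)}\,\|u\|_{L^2}+r\,C_r^{1/r}\,\mu^{-1/r}\,\|u\|_{H^1}.
\]
Choosing $\mu$ so that $(1-r)\mu^{1/(1-r)}=\gamma$, i.e.\ $\mu=\bigl(\gamma/(1-r)\bigr)^{1-r}$, makes the prefactor of $\|u\|_{H^1}$ a constant multiple of $\gamma^{-(1-r)/r}=\gamma^{(1-r)/(-r)}$, matching the required exponent exactly.

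There is no real obstacle here beyond bookkeeping; the only thing to watch is that the strict inequality $r>\tfrac{1}{2p}$ is essential so that the Sobolev embedding in the first step has room to absorb any endpoint loss, and it is required outright when $p=1$ since Lemma \ref{2t3} only produces $\sqrt{W}:H^s\to L^2$ for $s>\tfrac12$; meanwhile $r<1$ is needed to keep the conjugate exponent $1/(1-r)$ finite in Young's inequality, which also accounts for the blow-up of the prefactor as $\gamma\to 0^+$.
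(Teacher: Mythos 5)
Your proof is correct and takes essentially the same route as the paper: both reduce the claim to the multiplier bound $\|\sqrt{W}u\|_{L^2}\le C\|u\|_{H^r}$ from Lemma~\ref{2t3} combined with a weighted interpolation between $L^2$ and $H^1$; the paper simply cites the weighted interpolation inequality directly from \cite[Proposition E.21]{dz19}, whereas you re-derive it from the multiplicative bound $\|u\|_{H^r}\le\|u\|_{L^2}^{1-r}\|u\|_{H^1}^r$ via Young's inequality.
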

\begin{proof}
Use the weight $(C^{-1}\gamma)^{\frac{s_2-r}{s_1-r}}$ in \cite[Proposition E.21]{dz19} with $s_1=0, s_2=1, h=1$ to observe that for all $r\in (0,1)$, there is $C>0$ such that for all $\gamma>0$ we have
\begin{equation}\label{2l31}
\|u\|_{H^r}\le \gamma\|u\|_{L^2}+C\gamma^{\frac{1-r}{-r}}\|u\|_{H^1}. 
\end{equation}
For any $r\in(\frac{1}{2p},1)$, Lemma \ref{2t3} implies $\|\sqrt{W} u\|_{L^2}\le \|u\|_{H^r}$ and concludes the proof. 
\end{proof}
\begin{proposition}[Pole-free region in the lower half plane]\label{2t6}
Let $W$ be normally $L^p$ for $p\in (1,\infty)$. Then the following are true:
\begin{enumerate}[wide]
\item For any $M, \delta>0$, there exists $K_{M,\delta}>0$ such that $\Pl: H^{1} \rightarrow H^{-1}$ is bijective on $\{\lambda\in \mathbb{C}: \abs{\cre{\lambda}}\le -M \abs{\cim\lambda}^{p-\delta}, \cim\lambda\le -K_{M,\delta}\}$. 
\item Along any ray $\lambda=\abs{\lambda} e^{i\theta}$ with $\theta\in(\pi, 2\pi)$, there are $\lambda_0, C>0$ such that for any $\abs{\lambda}> \lambda_0$ we have
\begin{equation}\label{2l32}
\|P_{\lambda}^{-1}\|_{L^2\rightarrow H^1}\le C\abs{\lambda}^{-1}, \|P_{\lambda}^{-1}\|_{L^2\rightarrow L^2}\le C\abs{\lambda}^{-2}. 
\end{equation}
\item There are $C,K>0$ such that for any $u\in H^1$ and $\lambda\in \{\cim\lambda\le -K\}$ we have
\begin{equation}
\|u\|_{H^1}^2\le C\langle \cre\lambda\rangle^2\|u\|_{L^2}^2+C\|P_\lambda u\|_{H^{-1}}^2.
\end{equation} 
\end{enumerate}
\end{proposition}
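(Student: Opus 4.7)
The common framework for all three claims is to pair $P_\lambda u = f$ with $u$ and split into real and imaginary components. With $\lambda = \alpha + i\beta$ and $\beta \le -K$, these read
\begin{equation}
\|\nabla u\|^2+(\beta^2-\alpha^2)\|u\|^2+\beta\|\sqrt{W}u\|^2 = \cre\langle f,u\rangle, \quad -\alpha(2\beta\|u\|^2+\|\sqrt{W}u\|^2)=\cim\langle f,u\rangle.
\end{equation}
The obstruction compared to the upper half plane treated in Proposition \ref{maplemma} is that $\beta\|\sqrt{W}u\|^2 \le 0$, so the damping contribution has the wrong sign and must be controlled via the Sobolev multiplier interpolation in Lemma \ref{2t5}. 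I fix a Sobolev exponent $r \in \left(\tfrac{1}{2p}, \tfrac{1}{2\max(1,p-\delta)}\right)$, nonempty precisely because $p>1$ and $\delta>0$, and apply Lemma \ref{2t5} with $\gamma^2\sim |\beta|$. After squaring, the interpolation produces a main term absorbable into the $\beta^2\|u\|^2$ on the left-hand side and a Sobolev remainder of the form $C|\beta|^{(2r-1)/r}\|u\|_{H^1}^2$ with negative exponent, so the remainder vanishes as $|\beta|\to\infty$.

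For part (1), Proposition \ref{maplemma} already establishes $P_\lambda$ is Fredholm of index zero, so only injectivity needs to be shown. Setting $f = 0$ with $u\ne 0$: when $\alpha\ne 0$ the imaginary equation yields $\|\sqrt{W}u\|^2=2|\beta|\|u\|^2$ and substitution into the real equation gives $\|\nabla u\|^2=|\lambda|^2\|u\|^2$, so $\|u\|_{H^1}^2=(1+|\lambda|^2)\|u\|^2$. Applying Lemma \ref{2t5} with $\gamma^2=|\beta|/2$ to the identity $2|\beta|\|u\|^2=\|\sqrt{W}u\|^2$ and absorbing gives $|\beta|^{1/r}\le C(1+|\lambda|^2)$. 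In the region $|\cre\lambda|\le M|\cim\lambda|^{p-\delta}$ we have $|\lambda|^2\le C_M|\beta|^{2\max(1,p-\delta)}$, and the choice of $r$ (which satisfies $1/r>2\max(1,p-\delta)$) forces $|\beta|\le K_{M,\delta}$, contradicting $|\beta|\ge K_{M,\delta}$. The case $\alpha=0$ uses only the real equation; the same interpolation-and-absorption yields $(1/2)\|\nabla u\|^2+(\beta^2-1)/2\|u\|^2\le 0$, hence $u=0$ for $|\beta|$ large.

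For part (2), along the ray $\lambda=|\lambda|e^{i\theta}$ with $\theta\in(\pi,2\pi)\setminus\{3\pi/2\}$, the ratio $|\alpha|/|\beta|$ is fixed and both $|\alpha|,|\beta|$ scale like $|\lambda|$; the imaginary equation gives $\|\sqrt{W}u\|^2=2|\beta|\|u\|^2-\alpha^{-1}\cim\langle f,u\rangle$. Interpolating with $\gamma^2=|\beta|/2$ and absorbing against $\|u\|_{H^1}^2\lesssim|\lambda|^2\|u\|^2+\|f\|\|u\|$ (from substituting back into the real equation) produces $|\beta|\|u\|^2\le C|\lambda|^{-1}\|f\|\|u\|$, hence $\|u\|\le C|\lambda|^{-2}\|f\|$ and then $\|\nabla u\|\le C|\lambda|^{-1}\|f\|$. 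The ray $\theta=3\pi/2$ (purely imaginary $\lambda$) has $\alpha=0$, so one uses only the real equation with $\gamma^2=|\beta|/4$ to absorb $|\beta|\|\sqrt{W}u\|^2$ against $\beta^2\|u\|^2$ and obtain $\|\nabla u\|^2+\beta^2\|u\|^2/4\le C|\lambda|^{-2}\|f\|^2$ directly. For part (3), rewrite the real equation as $\|\nabla u\|^2+\beta^2\|u\|^2=\alpha^2\|u\|^2+|\beta|\|\sqrt{W}u\|^2+\cre\langle f,u\rangle$, apply the same key interpolation with $\gamma^2=|\beta|/4$ to split $|\beta|\|\sqrt{W}u\|^2$, move $\beta^2\|u\|^2/2$ to the left, absorb the Sobolev remainder into $\|u\|_{H^1}^2$ (small for $|\beta|\ge K$ and $r<1/2$), and apply Young's inequality to $\|f\|_{H^{-1}}\|u\|_{H^1}$ to reach $\|u\|_{H^1}^2\le C\langle\alpha\rangle^2\|u\|^2+C\|f\|_{H^{-1}}^2$ uniformly in $\lambda$ with $\cim\lambda\le -K$.

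The main obstacle is juggling three competing constraints on $r$: the lower bound $r>1/(2p)$ for Lemma \ref{2t5} to apply, the upper bound $r<1/2$ for the Sobolev remainder $|\beta|^{(2r-1)/r}$ to vanish at infinity (needed in parts (2) and (3)), and the upper bound $r<1/(2(p-\delta))$ for part (1)'s pole-free region. These are jointly satisfiable precisely when $p>1$ and $\delta>0$; the range degenerates as $p\to 1^+$, which produces the $|\beta|^{p-}$ shape of the pole-free region visible in Figure \ref{f3} and explains why the argument cannot recover the $L^\infty$-type parabolic pole-free region in the singular case.
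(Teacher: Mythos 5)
Your proof is correct and follows the same basic framework as the paper---pairing $P_\lambda u=f$ with $u$, splitting into real and imaginary parts, and invoking the interpolation of Lemma \ref{2t5}---but it departs from the paper in two ways that are worth noting.

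\emph{On the choice of $r$ in Part (1).} You fix $r\in\bigl(\tfrac{1}{2p},\tfrac{1}{2\max(1,p-\delta)}\bigr)$, which is the right constraint: absorbing the term $C_r|\beta|^{-s+2p-2\delta-1}\|\nabla u\|^2$ (and its $\alpha=0$ analogue $|\beta|^{-s+1}$) requires $s=\tfrac{1-r}{r}>\max(1,\,2p-2\delta-1)$, i.e. $r<\tfrac{1}{2\max(1,p-\delta)}$, while Lemma \ref{2t5} needs $r>\tfrac{1}{2p}$. The paper writes $r\in\bigl(\tfrac{1}{2p-2\delta},\tfrac{1}{2}\bigr)$, which actually gives $s<2p-2\delta-1$ and contradicts the line that immediately follows it; this appears to be a transposition typo, and your interval is the one the proof actually needs. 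You also test the kernel of $P_\lambda$ directly rather than $P_\lambda^*$; this is fine since the region in question is symmetric under $\lambda\mapsto-\bar\lambda$ and $P_\lambda$ has Fredholm index $0$, so triviality of either kernel suffices.

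\emph{On Part (2).} Here you take a genuinely different route. The paper semiclassicalizes along the ray, rescaling $P_\lambda$ to $P_h=h^2P_\lambda$ with $h=\langle\delta\rangle|\lambda|^{-1}$, and runs the interpolation argument in semiclassical Sobolev norms. You instead work directly with the pairing: the imaginary part gives $\|\sqrt{W}u\|^2=2|\beta|\|u\|^2-\alpha^{-1}\cim\langle f,u\rangle$, you substitute back into the real part to control $\|u\|_{H^1}^2$ by $|\lambda|^2\|u\|^2+\|f\|\|u\|$, then close the loop via Lemma \ref{2t5} with $\gamma^2\sim|\beta|$, using that on a fixed ray (with $\theta\neq3\pi/2$) all of $|\alpha|,|\beta|$ are comparable to $|\lambda|$, and treating $\theta=3\pi/2$ separately from the real part alone. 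This is more elementary than the paper's rescaling and produces the same $\|P_\lambda^{-1}\|_{L^2\to H^1}\lesssim|\lambda|^{-1}$, $\|P_\lambda^{-1}\|_{L^2\to L^2}\lesssim|\lambda|^{-2}$. Part (3) is argued essentially as in the paper. Overall a correct proof; the main value added is the repaired interval in Part (1) and the cleaner, rescaling-free treatment of Part (2).
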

\begin{proof}
1. Let $\lambda=\alpha+i\beta$. We show $P^*_\lambda$ has trivial kernel on 
\begin{equation}
D_\delta=\{0<\abs{\alpha}\le -M\beta^{1+\delta},\ \beta\le -K_\delta\}. 
\end{equation}
Let $P_\lambda^* u=0$. Fix some $r\in (\frac{1}{2p-2\delta}, \frac{1}{2})\subset(\frac{1}{2p},\frac{1}{2})$, then $s=\frac{1-r}{r}>2p-2\delta-1$. Lemma \ref{2t5} implies 
\begin{equation}\label{2l24}
\|\sqrt{W}u\|_{L^2}\le \epsilon\abs{\beta}^{\frac{1}{2}}\|u\|_{L^2}+C_{r,\epsilon}\abs{\beta}^{-\frac{s}{2}}\|u\|_{H^1}. 
\end{equation}

1a. When $\alpha\neq 0$, the imaginary part of \eqref{eq:kernel} reads
\begin{equation}\label{2l15}
2\abs{\beta}\|u\|^2\le \|\sqrt{W}u\|^2\le \abs{\beta}\|u\|^2_{L^2}+C_r\abs{\beta}^{-s}\|u\|_{H^1}^2,
\end{equation}
which implies that for $\abs{\beta}\ge C_r$ we have
\begin{equation}
\|u\|^2\le C_r\abs{\beta}^{-s-1}\|\nabla u\|^2,
\end{equation}
since $-s<-1$. Substituting this back into \eqref{2l15} to observe
\begin{equation}
\|\sqrt{W}u\|^2\le C_r\abs{\beta}^{-s}\|\nabla u\|^2.
\end{equation}
 The real part of \eqref{eq:kernel} now reads
\begin{equation}
\|\nabla u\|^2+\beta^2\|u\|^2=\alpha^2\|u\|^2+\abs{\beta}\|\sqrt{W}u\|^2\le C_r(\alpha^2\abs{\beta}^{-s-1}+\abs{\beta}^{-s+1})\|\nabla u\|^2.
\end{equation}
Note that $\alpha\le M\abs{\beta}^{p-\delta}$, and we have
\begin{equation}
\|\nabla u\|^2+\beta^2\|u\|^2\le C_r\abs{\beta}^{-s+2p-2\delta-1}\|\nabla u\|^2.
\end{equation}
Since $s>2p-2\delta-1$, there exists $K_\delta>0$ large such that for $\abs{\beta}\ge K_\delta$, the right hand side of the last equation can be absorbed by the left. We thus obtained $\|u\|_{H^1}=0$.

1b. When $\alpha=0$, consider the real part of \eqref{eq:kernel}:
\begin{equation}
\|\nabla u\|^2+\beta^2\|u\|^2=\abs{\beta}\|\sqrt{W}u\|^2\le \epsilon^2\beta^2\|u\|^2+C_{r,\epsilon}\abs{\beta}^{-s+1}\|u\|_{H^1}. 
\end{equation}
Since $s>1$, both terms on the right can be absorbed by the left when $\abs{\beta}$ is sufficiently large. In both cases, $u=0$ in $H^1$. Thus $P^*_\lambda$ has trivial kernel in $D_\delta$ and $P_\lambda: H^{1}\rightarrow H^{-1}$ is bijective there. 

2. Fix $\theta\in (\pi,2\pi)$. Then $\lambda=\abs{\lambda}e^{i\theta}$ can be parametrized by $\lambda= h^{-1}(\delta-i)$, where $h=\langle \delta\rangle\abs{\lambda}^{-1}\rightarrow 0$ and $\delta=\cot\theta$ fixed. From Step 1 we know that $P_\lambda$ is bijective for $\abs{\lambda}$ large. Let $P_\lambda u=f$ where
\begin{equation}
P_\lambda=-\Delta+h^{-2}(1-\delta^2)-h^{-1} W+ih^{-1}(2h^{-1}-\delta W).
\end{equation}
Semiclassicalize by $P_h=h^2P_\lambda$, $g=h^2f$ and
\begin{equation}
P_h u=-h^2\Delta+(1-\delta^2)-h W+i(2-h \delta W)u=g. 
\end{equation}
Pair it with $u$ in $L^2$ to observe
\begin{equation}\label{2l16}
\|h\nabla u\|^2+(1-\delta^2)\|u\|^2-h\|\sqrt{W}u\|^2+i\left(2\|u\|^2-h\delta\|\sqrt{W}u\|^2\right)=\langle g,u\rangle. 
\end{equation}
Fix some $r\in (\frac{1}{2p},\frac{1}{2})$, then $s=\frac{1-r}{r}>1$. Lemma \ref{2t5} implies 
\begin{equation}\label{2l18}
\|\sqrt{W}u\|_{L^2}\le \epsilon \langle\delta\rangle^{-\frac{1}{2}}h^{-\frac{1}{2}}\|u\|_{L^2}+C_\epsilon \langle\delta\rangle^{\frac{s}{2}} h^{\frac{s}{2}}\|u\|_{H^1}. 
\end{equation}
The imaginary part of \eqref{2l16} implies
\begin{multline}
2\|u\|^2\le h\delta\|\sqrt{W}u\|^2+\epsilon\|u\|^2+C\epsilon^{-1}\|g\|^2\le \epsilon(1+\delta\langle \delta\rangle^{-1})\|u\|^2_{L^2}+C_\epsilon (\delta\langle \delta\rangle^{s}h^{s+1}\|u\|^2_{H^1}+\|g\|^2)\\
\le 2\epsilon\|u\|^2_{L^2}+C_\epsilon (\delta\langle \delta\rangle^{s}h^{s+1}\|u\|^2_{H^1}+\|g\|^2).
\end{multline}
When $h$ is smaller than some $\delta$-dependent bounds, the absorption of the first two terms on the right gives
\begin{equation}
\|u\|^2\le C h^{s-1}\|h\nabla u\|^2+C\|g\|^2.
\end{equation}
From now on our constants also depend on $\delta$. The imaginary part of \eqref{2l16} then gives
\begin{equation}\label{2l33}
h\|\sqrt{W}u\|_{L^2}^2\le C h^{s-1}\|h\nabla u\|^2+C\|g\|^2.
\end{equation}
Substitute those two estimates back to the real part of \eqref{2l16} to see
\begin{equation}
\|h\nabla u\|^2+\|u\|^2=\cre \langle g,u\rangle + h\|\sqrt{W}u\|^2+\delta^2\|u\|^2\le Ch^{s-1}\|h\nabla u\|^2+C\|g\|^2.
\end{equation}
Note $s>1$ and for $h$ small, $Ch^{s-1}<1$. Thus by absorbing the first term on the right there is $h_0>0$ and for all $h<h_0$ we have $\|u\|_{H^1_h}\le C\|g\|_{L^2}$, where $C$ does not depend on $h$. This implies
\begin{equation}\label{2l17}
\|u\|_{L^2}^2+\langle \delta\rangle^2\abs{\lambda}^{-2}\|\nabla u\|_{L^2}^2\le C\langle \delta\rangle^{4}\abs{\lambda}^{-4}\|f\|_{L^2}^2,
\end{equation}
for all $\abs{\lambda}\ge \lambda_0$ for some $\lambda_0>0$. This gives us the desired estimate \eqref{2l32} on the lines.

3. Consider the pairing $\langle P_\lambda u, u\rangle$ in \eqref{2l21}. Its real part is
\begin{equation}
\|\nabla u\|^2+(\beta^2-\alpha^2)\|u\|^2=\cre\langle f,u\rangle-\beta\|\sqrt{W}u\|^2.
\end{equation}
Apply \eqref{2l24} to observe
\begin{equation}
\|\nabla u\|^2+(\beta^2-\alpha^2)\|u\|^2\le C\epsilon^{-1}\|f\|_{H^{-1}}^2+\epsilon \|u\|_{H^1}^2+\epsilon\abs{\beta}^{2}\|u\|^2+C_{\epsilon}\abs{\beta}^{-s+1}\|u\|^2_{H^1}.
\end{equation}
As $s>1$, there exists $K>0$ such that when $\beta\le -K$, the last three terms on the right can be absorbed. This gives
\begin{equation}
\|u\|_{H^1}^2-C\langle \alpha\rangle^2\|u\|^2\le C\|f\|_{H^{-1}}^2,
\end{equation}
as desired. 
\end{proof}
\begin{remark}
The proof only used the property of $\sqrt{W}$ being a bounded map from $H^{r}$ to $L^2$ for some $r<\frac{1}{2}$. Note that we need some elbow room for $r\in(\frac{1}{2p},\frac{1}{2})$ to show $h^{\frac{1}{2}}\|\sqrt{W}u\|\le Ch^{\frac{1}{2}-r}\|u\|_{H^r_h}$ is semiclassically small compared to $\|u\|_{H^1_h}$ in \eqref{2l33}. When $p=1$, $(\frac{1}{2p},\frac{1}{2})$ is empty and there is no $r$ for the proof to work. This is consistent with the observation that $\sqrt{W}$ is not a bounded map from $H^{\frac{1}{2}}$ to $L^2$ when $p=1$. In other words,  $h^{\frac{1}{2}}\sqrt{W}u$ becomes too large compared to all the other terms in \eqref{2l16}. This may explain why there can be finite time extinction in the case $\beta=-1$ in \cite{fhs20,cc01}. 
\end{remark}

\subsection{Semigroup generated by normally \texorpdfstring{$L^p$}{Lp}-damping}
Now we consider semigroups for normally $L^1$-damping. 
\begin{definition}[Semigroup for normally $L^1$-damping]
Let $\Hc=\{(u,v) \in H^1(M) \times L^2(M)\}$ with norm $\|(u,v)\|_{\Hc} = \|u\|_{H^1}^2 + \ltwo{v}^2$. Define 
\begin{equation}
\Ac = \begin{pmatrix} 0 & \id \\ \Delta & -W \end{pmatrix}: D(\Ac) \ra \Hc,
\end{equation}
with $D(\Ac)=\Dc=\{(u,v)\in \mathcal{H}: \Ac(u,v)\in \mathcal{H}\}$. Note that the equivalent definition for $\Dc$ is
\begin{equation}
\Dc=\{(u,v)\in H^1\times H^1: -\Delta u+Wv\in L^2\},
\end{equation}
equipped with $\|(u,v)\|_\Dc^2=\|(u,v)\|_{H^1\times H^1}^2+\|-\Delta u+Wv\|_{L^2}^2$, equivalent to the graph norm of $\Ac$. Solutions of the damped wave equation\eqref{DWE} are equivalent to solutions of 
\begin{equation}
\begin{cases} \p_t U(t) = \Ac U(t) \\ U(0)= (u_0, u_1)^t \end{cases}, \quad U(t) = \begin{pmatrix} u(t,x) \\ \p_t u(t,x) \end{pmatrix},
\end{equation}
where $U(t)=e^{t\Ac}U(0)$, whose semigroup nature will be shown soon in Propositions \ref{2t18} and \ref{2t12}. 
\end{definition}

\begin{remark}
Note that the our semigroup construction applies to damping not covered by \cite{fst18}. In particular, damping functions in $X^{\beta}_{\theta}$ are normally $L^1$ when $-1<\beta$, but are not $L^2_{\text{loc}}$ for $-1<\beta<-\frac{1}{2}$, and cannot satisfy the relative boundedness piece of \cite[Assumption 1]{fst18}. The semigroup from \cite{fst18} is used in \cite{fhs20} on damping of the form $2/x$ on $(0,1)$ with Dirichlet boundary conditions. This is possible because solutions are $0$ exactly where the damping is singular, which allows use of the Hardy inequality. We allow solutions to be non-zero where the damping is singular, and indeed, this is an essential feature of our quasimode construction in Section \ref{sharpnesssection}. One can check that our semigroup is the same as in \cite[Section 2.2]{cpsst19}.
\end{remark}

We remind the reader that $P_\lambda=-\Delta-i\lambda W-\lambda^2$ is bounded from $H^1\rightarrow H^{-1}$. We now draw the connection between $P_\lambda$ and $\Ac+i\lambda$, which will be useful to show that $e^{t\Ac}$ is a strongly continuos semigroup. Let $k\ge 2$ and $\Dc^1=\Dc$, define $\Dc^k=\{(u,v)\in \Dc^{k-1}: \Ac(u,v)\in \Dc^{k-1}\}$, and then $\Ac^k: \Dc^k\rightarrow\mathcal{H}$ is a bounded map. 
\begin{lemma}[Spectral equivalence]\label{2t9}
Let $\lambda\in \mathbb{C}$. Then the following are true:
\begin{enumerate}
\item $\Ac+i\lambda:\Dc\rightarrow\mathcal{H}$ is bijective if and only if $P_{\lambda}: H^{1}\rightarrow H^{-1}$ is bijective. 
\item $\Ac+i\lambda$ is bijective iff $\Ac-i\bar\lambda$ is so: $\spec \Ac$ is symmetric about the real axis. 
\item $\Ac: \Dc\rightarrow\{(u,v)\in \mathcal{H}: \langle Wu+v,1\rangle =0\}$ is surjective, and $\ker \Ac=\spa \{(1,0)\}$. 
\item If $W\neq 0$, then $\ker(\Ac^k)=\ker(\Ac)$ for all $k\ge 2$: $\Ac$ has a simple pole at $0$. 
\end{enumerate}
\end{lemma}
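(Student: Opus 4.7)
The overall plan is to Schur-complement the $2\times 2$ block structure of $\Ac + i\lambda$ down to a single elliptic equation involving $\Pl$, and then read off each claim from Proposition \ref{maplemma}.

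For Part (1), given $(f,g) \in \Hc$, the system $(\Ac + i\lambda)(u,v) = (f,g)$ reads $v = f - i\lambda u$ together with $\Delta u + (i\lambda - W)v = g$. Eliminating $v$ yields the single equation $\Pl u = -g + i\lambda f - Wf$. Lemma \ref{2t3} ensures $Wf \in H^{-1}$ whenever $f \in H^1$, so the right-hand side lies in $H^{-1}$; hence if $\Pl : H^1 \to H^{-1}$ is bijective, a unique $u \in H^1$ solves this equation, $v = f - i\lambda u$ lies in $H^1$, and substituting back shows $-\Delta u + Wv = -g + i\lambda f + \lambda^2 u \in L^2$, placing $(u,v) \in \Dc$. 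For the converse, the same elimination produces a linear isomorphism $\ker(\Ac + i\lambda) \cong \ker \Pl$ via $(u,v) \mapsto u$ with inverse $u \mapsto (u, -i\lambda u)$; since the proof of Proposition \ref{maplemma} already establishes that $\Pl$ is Fredholm of index $0$ for every $\lambda \in \mathbb{C}$, injectivity and bijectivity agree there, so the reverse implication follows.

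Part (2) is then an immediate corollary: Part (1) combined with Proposition \ref{maplemma}(4) gives $\Ac + i\lambda$ bijective iff $\Pl$ bijective iff $P_{-\bar\lambda}$ bijective iff $\Ac - i\bar\lambda$ bijective, so $\spec \Ac$ is symmetric about the real axis.

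For Part (3), I specialize the reduction to $\lambda = 0$: $\Ac(u,v) = (f,g)$ is equivalent to $v = f$ together with $-\Delta u = -g - Wf$. By Proposition \ref{maplemma}(5), the latter is solvable in $H^1$ precisely when $\langle g + Wf, 1\rangle = 0$, which is exactly the image described. The kernel is read off directly: $v = 0$ and $\Delta u = 0$ force $u$ constant, so $\ker \Ac = \spa\{(1,0)\}$. For Part (4), I would induct on $k$, the base case $k=1$ being Part (3). Assuming $\ker(\Ac^{k-1}) = \ker \Ac$, if $\Ac^k(u,v) = 0$ then $\Ac(u,v) \in \ker(\Ac^{k-1}) = \ker \Ac$, so $\Ac(u,v) = (c,0)$ for some $c \in \mathbb{C}$. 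Unpacking, this forces $v \equiv c$ and $-\Delta u = -cW$; pairing with the constant function $1$ gives $0 = c\int_M W\,dx$, and since $W \geq 0$ with $W \not\equiv 0$ implies $\int_M W > 0$, we conclude $c = 0$, so $\Ac(u,v) = 0$ and $(u,v) \in \ker \Ac$. The only technical point requiring care throughout is the distributional meaning of the products $Wu$ and $Wv$ for unbounded $W$, which is handled uniformly by Lemma \ref{2t3}; there is no serious obstacle beyond the bookkeeping of the Schur-complement reduction.
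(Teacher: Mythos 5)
Your proof is correct, and the major components mirror the paper's (the Schur-complement elimination, the explicit inverse for Part (1)'s forward direction, and the range computations for Parts (3)--(4)), but your converse of Part (1) takes a genuinely cleaner route. The paper assumes $\Ac + i\lambda$ bijective, uses $(\Ac+i\lambda)(u,v) = (0,-g)$ to invert $\Pl$ from the nonstandard domain $\{u \in H^1 : (-\Delta - i\lambda W)u \in L^2\}$ onto $L^2$, then passes through the adjoint $\Pl^* = P_{-\bar\lambda}$ on dual spaces and a Lax--Milgram-type pairing to recover invertibility of $P_{-\bar\lambda}$ (hence $\Pl$) from $H^1$ to $H^{-1}$. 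You instead observe that $(u,v) \mapsto u$ and $u \mapsto (u,-i\lambda u)$ give a linear isomorphism $\ker(\Ac+i\lambda) \cong \ker \Pl$ and invoke the Fredholm index-$0$ property of $\Pl$ (established inside the proof of Proposition \ref{maplemma}) to upgrade injectivity to bijectivity, which avoids the dual-space chase entirely and is shorter. One small step worth making explicit in the kernel correspondence: for $u \in H^1$ with $\Pl u = 0$ in $H^{-1}$, you need $(u,-i\lambda u) \in \Dc$; this follows since $-\Delta u - i\lambda W u = \lambda^2 u \in L^2$ is just a rearrangement of the kernel equation, but it should be stated. Parts (2)--(4) coincide with the paper. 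For Part (4), your direct pairing $0 = \langle \Delta u, 1\rangle = c\langle W, 1\rangle$ (valid because normally-$L^1$ damping on compact $M$ lies in $L^1(M)$) is a compact substitute for the paper's appeal to the surjectivity criterion in Proposition \ref{maplemma}(5), and your induction is equivalent to the paper's observation that $\ker(\Ac^2) = \ker\Ac$ stabilizes the kernel chain.
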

\begin{proof}
1. Assume $P_\lambda:H^1\rightarrow H^{-1}$ is injective at some $\lambda$. Then for any $(f,g)\in \mathcal{H}=H^1\times L^2$, consider
\begin{equation}\label{2l25}
\begin{pmatrix}
u\\
v
\end{pmatrix}=
(\Ac+i\lambda)^{-1}
\begin{pmatrix}
f\\g
\end{pmatrix}
=
\begin{pmatrix}
-i\lambda^{-1}(\id +P_\lambda^{-1}\Delta) & -P_\lambda^{-1}\\
-P_\lambda^{-1}\Delta & i\lambda P_\lambda^{-1}
\end{pmatrix}
\begin{pmatrix}
f\\g
\end{pmatrix}.
\end{equation}
as an element in $H^1\times H^1$ that satisfies
\begin{equation}
(\Ac+i\lambda)
\begin{pmatrix}
u\\v
\end{pmatrix}=
\begin{pmatrix}
i\lambda u+v\\
\Delta u-\left(W-i\lambda\right)v
\end{pmatrix}=
\begin{pmatrix}
f\\g
\end{pmatrix}.
\end{equation}
Note that $\Delta u-Wv=g+i\lambda v\in L^2$. Thus $(u,v)\in \Dc$ and $(\Ac+i\lambda)(u,v)=(f,g)$. Moreover, $(\Ac+i\lambda)^{-1}(0,0)=(0,0)$ since $P_{\lambda}$ is injective. Therefore $\Ac+i\lambda$ is bijective. 

2. Assume $\Ac+i\lambda:\Dc\rightarrow \mathcal{H}$ is bijective. For any $g\in L^2$, there exists a unique $(u,v)\in \Dc$ such that
\begin{equation}
(\Ac+i\lambda)
\begin{pmatrix}
u\\v
\end{pmatrix}=
\begin{pmatrix}
i\lambda u+v\\
\Delta u-\left(W-i\lambda\right)v
\end{pmatrix}=
\begin{pmatrix}
0\\-g
\end{pmatrix}.
\end{equation}
Thus $v=-i\lambda u$ and $P_\lambda u=g$. Moreover, $(-u, -i\lambda u)\in \Dc$ implies $u\in \{u\in H^1: (-\Delta-i\lambda W)u\in L^2\}\subset H^1$. Thus $P_\lambda: L^2 \rightarrow \{u\in H^1: (-\Delta-i\lambda W)u\in L^2\}$ is bijective. Its dual $P_\lambda^*=P_{-\bar\lambda}: \{u\in H^1: (-\Delta-i\lambda W)u\in L^2\}^*\rightarrow L^2$ is bijective. Let $f\in H^{-1}\subset (-\Delta-i\lambda W)u\in L^2\}^*$, then there exists a unique $w\in L^2$ such that $P_{-\bar\lambda} w= (-\Delta+i\bar \lambda W-\bar\lambda^2)w=f$. Pair $P_{-\bar\lambda} w$ with $w$ and take the real part to see
\begin{equation}
\|\nabla w\|^2\le \abs{\langle f, w\rangle}+\bar\lambda^2 \|w\|^2<\infty.
\end{equation} 
Thus $w\in H^1$ and $P_{-\bar\lambda}: H^{-1}\rightarrow H^1$ is bijective. Apply Proposition \ref{maplemma} to see $P_{\lambda}: H^{-1}\rightarrow H^1$ is also bijective. This also implies $\Ac+i\lambda$ is bijective iff $\Ac-i\bar\lambda$ is so. 

3. Let $(u,v)\in \mathcal{H}$ and $\Ac(u,v)=(v,\Delta u-Wv)=(f,g)\in\mathcal{H}$. Thus $v=f\in H^1$ and $-P_0 u=\Delta u=Wf+g\in H^{-1}$ since $Wf\in H^{-1}$. Due to Proposition \ref{maplemma}, there exists $u\in H^1$ if and only if $\langle Wf+g,1\rangle=0$, and $\ker \Delta=\spa \{1\}$. Thus $\Ac$ is surjective onto $\{(f,g)\in\mathcal{H}: \langle Wf+g,1\rangle=0\}$ and $\ker \Ac=\spa \{(1,0)\}$. 

4. To show $\ker(\Ac^k)=\ker(\Ac)$, it suffices to show that $\ker(\Ac^2)=\ker(\Ac)$, that is, for any $(u,v)\in \Dc^2$, if $\Ac(u,v)\in\ker\Ac$ then $(u,v)\in \ker\Ac$. Note that $\Dc^2=\{(u,v)\in H^1\times H^1: \Delta u-Wv\in H^1, -\Delta v+W(\Delta u-Wv)\in L^2\}$. Assume $\Ac(u,v)=(v,\Delta u-Wv)=c(1,0)$ for some constant $c$. Then $v=c$, and $\Delta u=cW\in H^{-1}$. By Proposition \ref{maplemma}, there exists $u\in H^1$ such that $\Delta u=cW\in H^{-1}$ only if $\langle cW, 1\rangle=0$. Since $W\ge 0$ is not identically 0, $v=c=0$, and $u=c'$ for some constant $c'$. Thus $(u,v)\in \ker \Ac$. Note that when $W\equiv 0$, $\ker(\Ac^2)=\{(u,v)\in \Dc^2: \langle u,1\rangle=\langle v,1\rangle=0\}$ is a 2-dimensional subspace that contains $\ker\Ac$ as a proper subspace. 
\end{proof}

\begin{corollary}[Pole-free region of $\Ac$]\label{2t10}
The following are true:
\begin{enumerate}[wide]
\item Let $W$ be normally $L^1$. Then $\Ac-\mu:\Dc\rightarrow \mathcal{H}$ is bijective on $\{\cre\mu\ge0, \mu\neq 0\}$. 
\item If we further assume $W$ is normally $L^p$ for $p\in (1,\infty]$, then for any $M, \delta>0$, there exists $K_{M,\delta}>0$ such that $\Ac-\mu:\Dc\rightarrow \mathcal{H}$ is bijective on $\{\mu\in \mathbb{C}: \abs{\cim{\mu}}\le -M \abs{\cre\mu}^{p-\delta}, \cre\mu\le -K_{M,\delta}\}$.
\end{enumerate}
\end{corollary}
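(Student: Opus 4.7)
The plan is to deduce both statements from the spectral equivalence in Lemma \ref{2t9}, which lets us translate pole-free regions of $P_\lambda$ (established in Propositions \ref{maplemma} and \ref{2t6}) into pole-free regions of $\Ac-\mu$. The key dictionary is the substitution $\lambda = i\mu$, coming from the identity $\Ac-\mu = \Ac+i\lambda$, under which $\operatorname{Im}\lambda = \operatorname{Re}\mu$ and $\operatorname{Re}\lambda = -\operatorname{Im}\mu$. By Lemma \ref{2t9}(1), $\Ac-\mu:\Dc\to\mathcal{H}$ is bijective if and only if $P_\lambda:H^1\to H^{-1}$ is bijective, so the problem is purely one of transferring spectral information.

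For part (1), assume $W$ is normally $L^1$. Under $\lambda=i\mu$, the half-plane $\{\operatorname{Re}\mu\ge 0,\ \mu\ne 0\}$ corresponds exactly to the upper-half region $\{\operatorname{Im}\lambda\ge 0,\ \lambda\ne 0\}$ appearing in Proposition \ref{maplemma}(1), which establishes bijectivity of $P_\lambda$ there. Lemma \ref{2t9}(1) then yields bijectivity of $\Ac-\mu$ on $\{\operatorname{Re}\mu\ge 0,\ \mu\ne 0\}$, as claimed. (The single point $\mu=0$ is excluded because $P_0=-\Delta$ has a simple pole, consistent with Lemma \ref{2t9}(3)--(4).)

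For part (2), assume $W$ is normally $L^p$ for $p\in (1,\infty)$. The region
\begin{equation}
\{\mu\in\mathbb{C}: |\operatorname{Im}\mu|\le -M|\operatorname{Re}\mu|^{p-\delta},\ \operatorname{Re}\mu\le -K_{M,\delta}\}
\end{equation}
maps under $\lambda=i\mu$ to
\begin{equation}
\{\lambda\in\mathbb{C}: |\operatorname{Re}\lambda|\le -M|\operatorname{Im}\lambda|^{p-\delta},\ \operatorname{Im}\lambda\le -K_{M,\delta}\},
\end{equation}
which is precisely the pole-free region for $P_\lambda$ furnished by Proposition \ref{2t6}(1), with the same constant $K_{M,\delta}$. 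One more application of Lemma \ref{2t9}(1) concludes the argument. For the remaining case $p=\infty$, i.e.\ $W\in L^\infty(M)$, a direct energy pairing in $\langle P_\lambda u,u\rangle$ (as in Step 3 of the proof of Proposition \ref{2t6}, now without needing the interpolation Lemma \ref{2t5} since $\sqrt{W}$ is $L^2$-bounded) yields an even stronger pole-free region of the form $\{\operatorname{Im}\lambda\le -K,\ |\operatorname{Re}\lambda|\le -M|\operatorname{Im}\lambda|^N\}$ for any $N$, which in particular covers the region in the statement; this is the classical bounded-damping setup and translates identically.

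Since each step is a straightforward translation of results already proved, I do not anticipate a serious obstacle. The only delicate point is purely notational bookkeeping: making sure the real/imaginary axes are swapped and the signs carefully reversed under $\lambda=i\mu$, so that the left half $\mu$-plane in the corollary matches the lower half $\lambda$-plane of Proposition \ref{2t6}, and the positive real $\mu$-axis matches the positive imaginary $\lambda$-axis of Proposition \ref{maplemma}.
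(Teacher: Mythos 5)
Your proposal is correct and takes essentially the same route as the paper: the paper's proof is the single line ``Let $\lambda=i\mu$ and apply Lemma \ref{2t9} to the part 1 of Propositions \ref{maplemma} and \ref{2t6}.'' You additionally spell out the axis-swapping bookkeeping and supply an argument for $p=\infty$ (which Proposition \ref{2t6} formally excludes, so the paper's terse proof glosses over it); both are reasonable elaborations rather than deviations.
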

\begin{proof}
Let $\lambda=i\mu$ and apply Lemma \ref{2t9} to the part 1 of Propositions \ref{maplemma} and \ref{2t6}. 
\end{proof}

Let $w_0=\essinf\{W(z): z\in M\}\ge 0$ be the essential minimum of the damping $W$. We now show $e^{t\Ac}$ is a strongly continuous semigroup on $\Hc$. 
\begin{proposition}[Quasi-contraction semigroup]\label{2t18}
Let $W$ be normally $L^1$. Then the following are true:
\begin{enumerate}
\item The generator $\Ac$ is closed and $\Dc$ is dense in $\mathcal{H}$. 
\item The generator $\Ac$ generates a strongly continuous semigroup $e^{t\Ac}: \Hc\rightarrow\Hc$. 
\item $e^{t\Ac}$ is quasi-contractive: for all $t\ge 0$, $\|e^{t\Ac}\|_{\mathcal{L}(\mathcal{H})}\le \exp(\frac{1}{2}(\langle w_0\rangle -w_0) t)$. 
\item The generator $\Ac$ has compact resolvent, and the spectrum of $\Ac$ contains only isolated eigenvalues.
\end{enumerate}
\end{proposition}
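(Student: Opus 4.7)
The plan is to apply the Lumer--Phillips theorem for parts (2) and (3), extract (1) along the way, and treat (4) as a separate compactness argument building on the resolvent calculus already established. First I would check that $\Ac$ is closed: since $\Dc=\{(u,v)\in\Hc:\Ac(u,v)\in\Hc\}$ is the maximal-operator construction, closedness reduces to verifying that whenever $(u_n,v_n)\to(u,v)$ in $\Hc$ and $\Ac(u_n,v_n)\to(f,g)$ in $\Hc$, the limit $(u,v)$ actually lies in $\Dc$. For that I would combine $\Delta u_n\to\Delta u$ in $H^{-1}$ with the convergence $W v_n\to W v$ in $H^{-1/2-}$, writing $W v_n=\sqrt{W}(\sqrt{W}v_n)$ and using the two mapping properties from Lemma \ref{2t3}, and match them against the $L^2$ limit $g$. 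For density of $\Dc$ in $\Hc$, I would observe that $H^2(M)\times C_c^\infty(M\setminus N)$ sits inside $\Dc$ (because on $\operatorname{supp} v$ the damping $W$ is essentially bounded, so $Wv\in L^2$) and is itself dense in $H^1\times L^2$.

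The dissipation calculation is the next step. With the convention $\langle a,b\rangle_{H^1}=\langle a,b\rangle_{L^2}+\langle\nabla a,\nabla b\rangle_{L^2}$ and integrating $\langle\Delta u,v\rangle$ by parts, the two gradient cross-terms cancel in the real part, leaving
\begin{equation}
\cre\langle\Ac(u,v),(u,v)\rangle_\Hc=\cre\langle v,u\rangle_{L^2}-\|\sqrt{W}v\|_{L^2}^2.
\end{equation}
After using $\|\sqrt{W}v\|_{L^2}^2\ge w_0\|v\|_{L^2}^2$ and a weighted Cauchy--Schwarz $|\langle v,u\rangle_{L^2}|\le \tfrac{a}{2}\|u\|_{L^2}^2+\tfrac{1}{2a}\|v\|_{L^2}^2$ with the choice $a=\langle w_0\rangle-w_0$, the algebraic identity $(\langle w_0\rangle-w_0)^2=1+2w_0^2-2w_0\langle w_0\rangle$ (immediate from $\langle w_0\rangle^2=1+w_0^2$) makes the two coefficients equal and produces
\begin{equation}
\cre\langle\Ac(u,v),(u,v)\rangle_\Hc\le\tfrac{1}{2}(\langle w_0\rangle-w_0)\|(u,v)\|_\Hc^2.
\end{equation}
Hence $\Ac-\tfrac{1}{2}(\langle w_0\rangle-w_0)\id$ is dissipative. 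The range condition $\operatorname{Range}(\lambda-\Ac)=\Hc$ for any $\lambda>\tfrac{1}{2}(\langle w_0\rangle-w_0)$ is already contained in Corollary \ref{2t10}(1), so the Lumer--Phillips theorem delivers the strongly continuous semigroup together with the quasi-contraction estimate in (2) and (3).

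For (4), I would show the embedding $\Dc\hookrightarrow\Hc$ is compact; compactness of $(\Ac-\mu)^{-1}:\Hc\to\Hc$ then follows since $(\Ac-\mu)^{-1}:\Hc\to\Dc$ is bounded. Take $(u_n,v_n)$ bounded in the graph norm of $\Ac$. Rellich applied to $v_n$ bounded in $H^1$ yields an $L^2$-convergent subsequence. For the first component I would bootstrap: $v_n$ bounded in $H^1$ makes $\sqrt{W}v_n$ bounded in $L^2$ by Lemma \ref{2t3}, hence $Wv_n=\sqrt{W}(\sqrt{W}v_n)$ is bounded in $H^{-1/2-}$; since $\Delta u_n=Wv_n-(-\Delta u_n+Wv_n)$ and the second summand is $L^2$-bounded by assumption, $u_n$ is bounded in $H^{3/2-}$, which embeds compactly into $H^1$. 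Passing to a subsequence gives convergence in $\Hc$. The spectrum is then a discrete set of eigenvalues of finite multiplicity by the standard theory for compact resolvents. The step I expect to be most delicate is exactly this bootstrap, since when $W$ is normally $L^1$ but not $L^\infty$ one cannot identify $\Dc$ with $H^2\times H^1$ and must rely on the fractional Sobolev multiplier bound to squeeze out the extra half derivative needed for Rellich.
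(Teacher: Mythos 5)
Your proposal is correct and follows essentially the same route as the paper: the same core of compactly supported smooth pairs for density, the same dissipation identity and weighted Cauchy--Schwarz with the identical choice of weight $\langle w_0\rangle-w_0$, the same range condition drawn from Corollary~\ref{2t10}(1), and the same $H^{3/2-}$ elliptic bootstrap via Lemma~\ref{2t3} for compactness of the resolvent. The only cosmetic differences are that you verify closedness by a direct sequential graph argument where the paper exhibits the graph as the zero set of a continuous map into $L^2\times H^{-1}_W$, and that you invoke Lumer--Phillips for the shifted operator where the paper applies Hille--Yosida directly; both variants are equivalent here and neither changes the substance of the argument.
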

\begin{proof}
1. Consider the core $\Dc_0=C^\infty(M)\times \{v\in C^\infty(M): \supp v\subset M\setminus N\}\subset \Dc$. Indeed, for such $(u,v)\in \Dc_0$, we have $Wv\in L^2$ from Lemma \ref{2t4}, and thus $(u,v)\in \Dc$. Furthermore $\Dc_0$ is dense in $\mathcal{H}=H^1\times L^2$. 

2. Consider the Hilbert space $H^1_W=\{\phi\in H^1: W \phi\in{L^2}\}$ equipped with inner product 
\begin{equation}
\langle\phi,\psi\rangle_{H^1_W}=\langle \phi,\psi\rangle_{H^1} +\langle W\phi, W\psi\rangle. 
\end{equation}
Consider its dual space $H^{-1}_{W}=(H^{1}_W)^*$, the set of complex-valued continuous linear functionals on $H^1_W$. Note that $H^{-1}\subset H^{-1}_W$. The map $G:\mathcal{H}\times \mathcal{H}\rightarrow L^2\times H^{-1}_W$ given by 
\begin{equation}
G\left((u,v), (f,g)\right)=(\Ac (u,v)-(f,g))=(v-f,\Delta u-W v-g),
\end{equation}
is then bounded. The graph of $\Ac: \Dc\rightarrow\mathcal{H}$ is the zero set of this continuous map $G$, and is thus closed. Indeed, $\Ac:\Dc\rightarrow \mathcal{H}$ is the maximal closed extension of $\Ac:\Dc_0\rightarrow C^\infty\times {L^2}$.

3. We now show $\Ac$ generates a strongly continuous semigroup. Firstly Consider that for any $(u,v)\in \Dc$, we have $v\in H^1$ and $\sqrt{W}v\in L^2$, and
\begin{equation}
\langle \Ac(u,v), (u,v)\rangle_{\mathcal{H}}=\langle (v,\Delta u-Wv),(u,v)\rangle_{\mathcal{H}}=-\|\sqrt{W} v\|^2+\langle v, u\rangle+2i\cim\langle \nabla v,\nabla u\rangle,
\end{equation}
the real part of which is
\begin{equation}
\cre \langle \Ac(u,v), (u,v)\rangle_{\mathcal{H}}=-\|\sqrt{W} v\|^2+\cre\langle v, u\rangle\le -w_0\|v\|^2+\cre\langle v, u\rangle.
\end{equation}
Note $\frac12 (\sqrt{w_0^2+1}-w_0)^{-1}-w_0=\frac12 (\sqrt{w_0^2+1}-w_0)$ and 
\begin{equation}
\abs{\cre\langle v, u\rangle}\le \frac12 (\sqrt{w_0^2+1}-w_0)^{-1}\|v\|^2 +\frac12 (\sqrt{w_0^2+1}-w_0)\|u\|^2,
\end{equation}
implies $\cre \langle \Ac(u,v), (u,v)\rangle_{\mathcal{H}}\le \frac12 (\langle w_0\rangle -w_0)\|(u,v)\|_{\mathcal{H}}^2$. Now for any real $\mu>\frac12 (\langle w_0\rangle -w_0)>0$, Corollary \ref{2t10} implies $\Ac-\mu: \Dc\rightarrow\mathcal{H}$ is bijective. Moreover, 
\begin{equation}
\|(\Ac-\mu)(u,v)\|_\mathcal{H}\|(u,v)\|_{\mathcal{H}}\ge \cre \langle (\mu-\Ac)(u,v), (u,v)\rangle_{\mathcal{H}}\\
\ge (\mu-\frac12 (\langle w_0\rangle -w_0))\|(u,v)\|_{\mathcal{H}}^2.
\end{equation}
This implies $\|(\Ac-\mu)^{-1}\|_{\mathcal{L}(\mathcal{H})}\le \left(\mu-\frac12 (\langle w_0\rangle -w_0)\right)^{-1}$. Apply the Hille-Yoshida theorem as in \cite[Corollary II.3.6, p. 76]{en99} to conclude that $\Ac: \Dc\rightarrow\mathcal{H}$ generates a strongly continuous semigroup $e^{t\Ac}:\mathcal{H}\rightarrow \mathcal{H}$, and $\|e^{t\Ac}\|_{\mathcal{L}(\mathcal{H})}\le \exp(\frac{1}{2}(\langle w_0\rangle -w_0) t)$ for all $t\ge 0$. 

4. Note that the range of any resolvent of $\Ac$ will be a subset of $\Dc$. For any $(u,v)\in \Dc$, we have $Wu\in H^{-\frac{1}{2}-}$ from Lemma \ref{2t3}, and $-\Delta u\in H^{-\frac{1}{2}-}$. From the classical elliptic regularity, we have $u\in H^{\frac{3}{2}-}$. Thus $\Dc\subset H^{\frac{3}{2}-}\times H^1$ embeds compactly into $\mathcal{H}=H^1\times L^2$. Then any resolvent of $\Ac$ is a compact operator from $\mathcal{H}$ to $\mathcal{H}$. This implies that the spectrum of $\Ac$ contains only isolated eigenvalues. 
\end{proof}
\begin{remark}
Note that when $W\equiv 0$, the $L^2$-mass of $u$ may grow linearly in time. We later show that if $W\neq 0$ then $e^{t\Ac}$ is bounded in Proposition \ref{2t12}.
\end{remark}

We need a more quantitative connection between the resolvent estimates for $P_\lambda$ and $\Ac+i\lambda$ to prove further results about $e^{t\Ac}$: 
\begin{lemma}[Resolvent equivalence]\label{2t8}
The following are true:
\begin{enumerate}[wide]
\item At any $\lambda\in \mathbb{C}\setminus \{0\}$ such that $\Ac+i\lambda: \Dc\rightarrow\mathcal{H}$ is bijective, we have
\begin{equation}
\|P_\lambda^{-1}\|_{L^2\rightarrow H^1}\le \|(\Ac+i\lambda)^{-1}\|_{\mathcal{L}(\mathcal{H})}, \ \|P_\lambda^{-1}\|_{L^2\rightarrow L^2}\le \abs{\lambda}^{-1}\|(\Ac+i\lambda)^{-1}\|_{\mathcal{L}(\mathcal{H})}. 
\end{equation}
\item Let $W$ be normally $L^1$. Then there is $C>0$ such that
\begin{equation}\label{2l23}
\|(\Ac+i\lambda)^{-1}\|_{\mathcal{L}(\mathcal{H})}\le \abs{\lambda}\|P_{\lambda}^{-1}\|_{L^2\rightarrow L^2}+\|P_\lambda^{-1}\|_{L^2\rightarrow H^1}+\left(1+C\abs{\lambda}^{-1}\langle \lambda\rangle\right) \|P_{-\bar\lambda}^{-1}\|_{L^2\rightarrow H^{1}}+C\abs{\lambda}^{-1},
\end{equation}
uniformly for any $\lambda\in \{\cim\lambda\ge0, \lambda\neq 0\}$. 
\item If we further assume $W$ is normally $L^p$ for $p\in (1,\infty]$, then there exists $K>0$ such that \eqref{2l23} is also uniformly true for any $\lambda\in \{\cim\lambda<-K\}$ at which both $P^{-1}_\lambda$ and $P^{-1}_{-\bar\lambda}$ are bounded from $L^2$ to $H^1$. 
\end{enumerate}
\end{lemma}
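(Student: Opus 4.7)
The plan is to exploit the explicit resolvent identity \eqref{2l25} and treat its four matrix entries one at a time.

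For part 1, I apply the formula to elements of the form $(0,g)\in \mathcal{H}$. The right-hand column of \eqref{2l25} gives
\begin{equation}
(\Ac+i\lambda)^{-1}(0,g) = \begin{pmatrix} -P_\lambda^{-1}g \\ i\lambda P_\lambda^{-1}g \end{pmatrix},
\end{equation}
so $\|P_\lambda^{-1}g\|_{H^1}^2 + |\lambda|^2\|P_\lambda^{-1}g\|_{L^2}^2 \le \|(\Ac+i\lambda)^{-1}\|_{\mathcal{L}(\mathcal{H})}^2 \|g\|_{L^2}^2$, and both claimed inequalities follow.

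For parts 2 and 3, I bound the four block entries of the matrix in \eqref{2l25} separately and then add. The top-right entry $-P_\lambda^{-1}:L^2\to H^1$ contributes $\|P_\lambda^{-1}\|_{L^2 \to H^1}$, and the bottom-right entry $i\lambda P_\lambda^{-1}:L^2 \to L^2$ contributes $|\lambda|\|P_\lambda^{-1}\|_{L^2 \to L^2}$. For the bottom-left $-P_\lambda^{-1}\Delta:H^1 \to L^2$, I would use that $-\Delta:H^1 \to H^{-1}$ is bounded together with the duality identity $(P_{-\bar\lambda}^{-1})^* = P_\lambda^{-1}$ arising from $P_\lambda^* = P_{-\bar\lambda}$, giving $\|P_\lambda^{-1}\|_{H^{-1}\to L^2} = \|P_{-\bar\lambda}^{-1}\|_{L^2\to H^1}$ and hence a bottom-left contribution of $C\|P_{-\bar\lambda}^{-1}\|_{L^2\to H^1}$. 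For the top-left $-i\lambda^{-1}(\id + P_\lambda^{-1}\Delta):H^1 \to H^1$, the identity contributes $|\lambda|^{-1}$, and for $P_\lambda^{-1}\Delta f$ I would combine the a priori estimate from Proposition \ref{maplemma}(3) (in the upper half plane) or Proposition \ref{2t6}(3) (when $\cim\lambda \le -K$),
\begin{equation}
\|P_\lambda^{-1}\Delta f\|_{H^1}^2 \le C\langle \cre\lambda\rangle^2 \|P_\lambda^{-1}\Delta f\|_{L^2}^2 + C\|\Delta f\|_{H^{-1}}^2,
\end{equation}
with the previous $H^{-1}\to L^2$ bound $\|P_\lambda^{-1}\Delta f\|_{L^2} \le \|P_{-\bar\lambda}^{-1}\|_{L^2 \to H^1}\|f\|_{H^1}$. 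This yields $\|P_\lambda^{-1}\Delta\|_{H^1 \to H^1} \le C(1 + \langle\lambda\rangle \|P_{-\bar\lambda}^{-1}\|_{L^2 \to H^1})$, producing a top-left contribution of $C|\lambda|^{-1} + C|\lambda|^{-1}\langle\lambda\rangle \|P_{-\bar\lambda}^{-1}\|_{L^2 \to H^1}$. Summing the four contributions gives \eqref{2l23}.

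The restriction to $\cim\lambda \le -K$ in part 3 is inherited from Proposition \ref{2t6}(3) and is why the hypothesis $p>1$ is needed there; in the upper half plane the corresponding a priori estimate of Proposition \ref{maplemma}(3) holds for all $\lambda$ with $\cim\lambda \ge 0$ and $\lambda \ne 0$, using only the normally $L^1$ hypothesis of part 2. The main subtlety lies in the top-left entry: combining the duality identity (which swaps $P_\lambda^{-1}$ for $P_{-\bar\lambda}^{-1}$) with the $H^1$ a priori estimate is precisely what produces both the $\langle \lambda\rangle/|\lambda|$ weight and the appearance of $\|P_{-\bar\lambda}^{-1}\|_{L^2\to H^1}$ alongside $\|P_\lambda^{-1}\|$ in \eqref{2l23}.
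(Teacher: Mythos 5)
Your proposal is correct and follows essentially the same route as the paper: part 1 is the paper's observation $(\Ac+i\lambda)(-u,i\lambda u)=(0,P_\lambda u)$ read in the opposite direction, and parts 2–3 likewise use the block formula \eqref{2l25}, the duality $\|P_\lambda^{-1}\|_{H^{-1}\to L^2}=\|P_{-\bar\lambda}^{-1}\|_{L^2\to H^1}$, and the a priori $H^1$ estimates of Proposition \ref{maplemma}(3) and Proposition \ref{2t6}(3) in precisely the same way; the only cosmetic difference is that you organize the computation as four separate matrix-entry bounds rather than estimating $u$ and $v$ directly.
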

\begin{proof}
1. Note that $(\Ac+i\lambda)(-u, i\lambda u)=(0, P_\lambda u)$. This implies
\begin{equation}
\|u\|_{H^1}\le \|(\Ac+i\lambda)^{-1}\|_{\mathcal{L}(\mathcal{H})}\|P_\lambda u\|_{L^2}, \ \|u\|_{L^2}\le \abs{\lambda}^{-1}\|(\Ac+i\lambda)^{-1}\|_{\mathcal{L}(\mathcal{H})}\|P_\lambda u\|_{L^2}. 
\end{equation}

2. The normally $L^1$ and $L^p$ assumptions will allow us to use the part 3 of Propositions \ref{maplemma} and \ref{2t6} respectively on each domain. Firstly note 
\begin{equation}
\left\|P^{-1}_\lambda\right\|_{H^{-1}\rightarrow L^2}=\left\|\left(P^{-1}_\lambda\right)^*\right\|_{L^2\rightarrow H^1}=\left\|\left(P^*_\lambda\right)^{-1}\right\|_{L^2\rightarrow H^1}=\left\|P_{-\bar\lambda}^{-1}\right\|_{L^2\rightarrow H^1}. 
\end{equation}
Now let $(\Ac+i\lambda)(u,v)=(f,g)$ and \eqref{2l25} implies
\begin{equation}
u=-i\lambda^{-1}f-i\lambda^{-1}P_\lambda^{-1}\Delta f-P_\lambda^{-1}g, \ v=-P_\lambda^{-1}\Delta f+i\lambda P_\lambda^{-1}g.
\end{equation}
Note
\begin{equation}\label{2l26}
\|P_\lambda^{-1}\Delta f\|_{L^2}\le \|P_{\lambda}^{-1}\|_{H^{-1}\rightarrow L^2}\|f\|_{H^1}=\|P_{-\bar\lambda}^{-1}\|_{L^2\rightarrow H^{1}}\|f\|_{H^1}. 
\end{equation}
Thus
\begin{equation}
\|v\|_{L^2}\le \left(\|P_{-\bar\lambda}^{-1}\|_{L^2\rightarrow H^{1}}+\abs{\lambda}\|P_{\lambda}^{-1}\|_{L^2\rightarrow L^2}\right)\|(f,g)\|_{\mathcal{H}}.
\end{equation}
On another hand, apply the part 3 of Propositions \ref{maplemma} and \ref{2t6} to \eqref{2l26} to see
\begin{equation}
\|P_\lambda^{-1}\Delta f\|_{H^1}\le C\langle \lambda\rangle \|P_\lambda^{-1}\Delta f\|_{L^2}+C\|\Delta f\|_{H^{-1}}\le C\left(\langle \lambda\rangle \|P_{-\bar\lambda}^{-1}\|_{L^2\rightarrow H^{1}}+1\right)\|f\|_{H^1}. 
\end{equation}
Then 
\begin{equation}
\|u\|_{H^1}\le \left(\|P_\lambda^{-1}\|_{L^2\rightarrow H^1}+C\abs{\lambda}^{-1}\langle \lambda\rangle \|P_{-\bar\lambda}^{-1}\|_{L^2\rightarrow H^{1}}+C\abs{\lambda}^{-1}\right)\|(f,g)\|_{\mathcal{H}}. 
\end{equation}
Bring together the estimates for $u$ and $v$ to conclude. 
\end{proof}

We cite a backward uniqueness result for semigroups and use it to prove our semigroup $e^{t\Ac}$ is backward unique when $W$ is normally $L^p$:
\begin{proposition}[Lasiecka-Renardy-Triggiani, Theorem 3.1 of \cite{lrt01}]\label{2t17}
Let $\Ac$ generates a strongly continuous semigroup $e^{t\Ac}$ on a Banach space $X$. Assume there exist $\theta\in (\pi/2,\pi), R,C>0$ such that uniformly for all $r\ge R$ we have
\begin{equation}
\|(\Ac-re^{\pm i\theta})^{-1}\|_{\mathcal{L}(X)}\le C.
\end{equation}
Then $e^{t\Ac}$ is backward unique, that is, if $e^{t\Ac}x=0$ at some $t>0$, then $x=0$. 
\end{proposition}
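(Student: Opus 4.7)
The plan is to prove this via the Phragm\'en--Lindel\"of principle applied to the Laplace transform of the orbit $t \mapsto e^{t\Ac}x$. Suppose $e^{T\Ac}x = 0$ for some $T > 0$; the semigroup property immediately yields $e^{t\Ac}x = 0$ for all $t \geq T$, so the orbit has compact support in $[0, T]$. Its Laplace transform
\begin{equation}
F(\lambda) = \int_0^T e^{-\lambda t} e^{t\Ac} x \, dt
\end{equation}
is an entire $X$-valued function of exponential type at most $T$, satisfying $\|F(\lambda)\|_X \leq C e^{T\max(0,-\cre\lambda)}\|x\|_X$. On the resolvent set of $\Ac$ the standard Laplace-transform identity gives $F(\lambda) = (\lambda - \Ac)^{-1} x$, so $F$ extends this a priori meromorphic function to an entire function on $\mathbb{C}$.

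Next I would collect growth information on three distinguished rays. Since $\Ac$ generates a $C_0$-semigroup with growth bound $\omega$, the Hille--Yosida inequality gives $\|F(\lambda)\|_X \leq M\|x\|_X/(\cre\lambda - \omega)$ for $\cre\lambda \ge \omega + 1$, which in particular decays to $0$ as $\cre\lambda \to \infty$. The hypothesis furnishes $\|F(\lambda)\|_X \leq C\|x\|_X$ on the two rays $\lambda = re^{\pm i\theta}$ for $r \geq R$. Thus $F$ is uniformly bounded along the union of a positive real half-line and two leftward-pointing rays.

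These three rays divide $\mathbb{C}\setminus\{0\}$ into three open sectors: two, separated by the positive real axis from the rays $re^{\pm i\theta}$, have aperture $\theta$, and the third, containing the negative real axis, has aperture $2\pi - 2\theta$. Since $\theta \in (\pi/2, \pi)$, all three apertures are strictly less than $\pi$, precisely the critical angle in Phragm\'en--Lindel\"of for order-one exponential growth. Applying the principle sector by sector, with the boundary bounds above and the exponential-type estimate inside, gives that $F$ is bounded throughout each sector and therefore on all of $\mathbb{C}$. A vector-valued Liouville theorem, reducing to the scalar case by pairing with functionals $y^* \in X^*$, forces $F$ to be constant, and the decay $\|F(\lambda)\|_X \to 0$ on the positive real axis pins that constant at zero. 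Hence $(\lambda - \Ac)^{-1} x = 0$ for $\cre\lambda > \omega$, which forces $x = 0$.

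The main technical obstacle is the Phragm\'en--Lindel\"of application in the wide sector surrounding the negative real axis, where the exponential growth of $F$ actually manifests; the constraint $\theta > \pi/2$, which makes the aperture $2\pi - 2\theta$ strictly below $\pi$, is exactly the slack required to absorb order-one growth. In the other two narrow sectors the $1/(\cre\lambda - \omega)$ decay on the real boundary and boundedness on the outer ray give boundedness inside just as easily. Some care is needed near the origin because the ray bounds are only assumed for $r \ge R$, but continuity of $F$ at $0$ plus a compactness argument on $|\lambda| \le R$ handles that cleanly.
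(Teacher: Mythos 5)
The paper does not supply a proof of this proposition; it is cited verbatim from Lasiecka--Renardy--Triggiani \cite{lrt01}, so there is no in-paper argument to compare against. Your proof is correct, and it reproduces the standard route for results of this type: form the Laplace transform of the compactly supported orbit, observe it agrees with the resolvent applied to $x$, bound it on the three distinguished rays, and invoke Phragm\'en--Lindel\"of in each sector of aperture below $\pi$ (which is precisely where the hypothesis $\theta > \pi/2$ enters, to tame the wide sector containing the negative real axis). This is, to my knowledge, essentially the argument of \cite{lrt01} itself.

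One point deserves a cleaner justification. You invoke the ``standard Laplace-transform identity'' to write $F(\lambda) = (\lambda - \Ac)^{-1}x$ on the resolvent set. The Laplace representation $\left(\lambda - \Ac\right)^{-1} = \int_0^\infty e^{-\lambda t}e^{t\Ac}\,dt$ only holds a priori for $\cre\lambda$ larger than the growth bound, and the resolvent set of a general $C_0$-semigroup generator need not be connected, so analytic continuation from the right half-plane does not by itself reach the rays $re^{\pm i\theta}$. The cleaner route is to verify directly, via a difference-quotient computation (or integration by parts for $x \in D(\Ac)$), that $F(\lambda) \in D(\Ac)$ and
\begin{equation}
\left(\lambda - \Ac\right)F(\lambda) = x - e^{-\lambda T}e^{T\Ac}x = x
\end{equation}
for \emph{every} $\lambda \in \mathbb{C}$, using the vanishing $e^{T\Ac}x = 0$. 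This identity then gives $F(\lambda) = (\lambda - \Ac)^{-1}x$ at any $\lambda$ where the resolvent exists, in particular on the rays, which is exactly where you need the hypothesis. With that repair (and, as you note, a compactness bound on $\left\{\abs{\lambda} \le R\right\}$ to handle the truncation of the rays), the argument is complete. Passing from $x \in D(\Ac)$ to general $x \in X$, if one prefers the integration-by-parts version, is handled by applying the result to $(\mu - \Ac)^{-1}x$ and using injectivity of the resolvent.
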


\begin{proposition}[Backward uniqueness for $e^{t\Ac}$]\label{2t7}
Let $W$ be normally $L^p$ for $p\in(1,\infty]$. Then $e^{t\Ac}$ is backward unique, that is, if $e^{t\Ac}(u,v)=0$ at some $t>0$, then $(u,v)=0$. 
\end{proposition}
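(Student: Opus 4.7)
The plan is to apply the Lasiecka-Renardy-Triggiani criterion (Proposition \ref{2t17}) on $X = \mathcal{H}$. This requires exhibiting an angle $\theta \in (\pi/2, \pi)$ along which $\|(\Ac - re^{\pm i\theta})^{-1}\|_{\mathcal{L}(\mathcal{H})}$ is bounded uniformly for $r$ large. The strategy is to combine Corollary \ref{2t10}(2), which produces a pole-free parabolic region for $\Ac$ in the left half-plane; Proposition \ref{2t6}(2), which gives ray resolvent estimates for $P_\lambda$ in the open lower half-plane; and Lemma \ref{2t8}(3), which transfers estimates on $P_\lambda$ to estimates on $\Ac$ via the substitution $\lambda = i\mu$.

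I would first fix any $\theta \in (\pi/2, \pi)$ (say $\theta = 3\pi/4$), set $\mu = re^{\pm i\theta}$ and $\lambda = i\mu$, so that $\Ac - \mu = \Ac + i\lambda$. The angle of $\lambda$ equals $\pm \theta + \pi/2$, which lies in $(\pi, 3\pi/2) \cup (-\pi/2, 0)$ and hence (mod $2\pi$) always in the open lower half-plane, with $\Im \lambda = -r|\cos \theta| \to -\infty$ as $r \to \infty$. To verify that $\mu$ lies in the pole-free region of Corollary \ref{2t10}(2), I would fix $\delta \in (0, p-1)$ and observe
\begin{equation*}
\frac{|\Im \mu|}{|\Re \mu|^{p-\delta}} = r^{1-(p-\delta)} \cdot \frac{|\sin \theta|}{|\cos \theta|^{p-\delta}} \longrightarrow 0, \quad r \to \infty,
\end{equation*}
so $(\Ac - \mu)^{-1}$ exists for all $r$ large enough.

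Next, Proposition \ref{2t6}(2) applied along the ray of $\lambda$ produces, for $r$ large,
\begin{equation*}
\|P_\lambda^{-1}\|_{L^2 \to L^2} \le C|\lambda|^{-2}, \qquad \|P_\lambda^{-1}\|_{L^2 \to H^1} \le C|\lambda|^{-1},
\end{equation*}
and the same bounds for $P_{-\bar\lambda}^{-1}$ since $-\bar\lambda$ is also on a ray in the open lower half-plane. Since $\Im \lambda < -K$ for $r$ large, Lemma \ref{2t8}(3) applies: inserting the estimates above into \eqref{2l23} and noting that $|\lambda|^{-1}\langle \lambda \rangle$ is bounded for $|\lambda|$ large, each of the four terms is $O(|\lambda|^{-1})$, giving
\begin{equation*}
\|(\Ac - re^{\pm i\theta})^{-1}\|_{\mathcal{L}(\mathcal{H})} = \|(\Ac + i\lambda)^{-1}\|_{\mathcal{L}(\mathcal{H})} \le C r^{-1}.
\end{equation*}
Proposition \ref{2t17} then delivers backward uniqueness.

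The main subtlety is aligning the domains where the two ingredients apply. The assumption $p > 1$ is essential: it makes the pole-free region of $\Ac$ strictly parabolic in the left half-plane, broad enough to contain every ray $\mu = re^{\pm i\theta}$ eventually, while simultaneously placing $\lambda = i\mu$ and $-\bar\lambda$ on rays in $(\pi, 2\pi)$ where Proposition \ref{2t6}(2) gives sharp decay. The case $p = \infty$ is handled identically (and is in fact easier, since the pole-free region becomes a strip and the Sobolev multiplier bounds of Lemma \ref{2t3} are stronger), and is also implicit in the classical Huang-Pr\"uss theory for bounded damping.
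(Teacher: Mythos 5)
Your proposal is correct and takes essentially the same route as the paper: pick a ray pair in the left half-plane, translate to $\lambda = i\mu$ in the lower half-plane, feed the ray resolvent bounds of Proposition~\ref{2t6}(2) (for both $\lambda$ and $-\bar\lambda$) into Lemma~\ref{2t8}(3), and then invoke the Lasiecka--Renardy--Triggiani criterion Proposition~\ref{2t17}. The extra step you include---verifying via Corollary~\ref{2t10}(2) that $\mu = re^{\pm i\theta}$ eventually enters the pole-free parabolic region---is redundant, since the $L^2\to H^1$ bound from Proposition~\ref{2t6}(2) already implies $P_\lambda$ is invertible, and hence so is $\Ac+i\lambda$ by Lemma~\ref{2t9}; the paper skips this check for exactly that reason.
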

\begin{proof}
1. We claim for any $\theta$ such that $\theta\in (\pi/2,3\pi/2)$, along the ray $-i\lambda=\abs{\lambda}e^{i\theta}$, there exists $C,\lambda_0>0$ such that for all $\abs{\lambda}>\lambda_0$, we have $\|(\Ac+i\lambda)^{-1}\|_{\mathcal{L}(\mathcal{H})}\le C\abs{\lambda}^{-1}$. Note that $\arg(\lambda)\in (\pi,2\pi)$ and so is $\arg(-\bar\lambda)$. From Proposition \ref{2t6}, we have 
\begin{equation}
\|P_{\lambda}^{-1}\|_{L^2\rightarrow H^1}\le C\abs{\lambda}^{-1}, \|P_{\lambda}^{-1}\|_{L^2\rightarrow L^2}\le C\abs{\lambda}^{-2}, 
\end{equation}
and 
\begin{equation}
\|P_{-\bar\lambda}^{-1}\|_{L^2\rightarrow H^1}\le C\abs{\lambda}^{-1},
\end{equation}
for $\abs{\lambda}\ge \lambda_0$. Invoke Lemma \ref{2t8} to see $\|(\Ac+i\lambda)^{-1}\|_{\mathcal{L}(\mathcal{H})}\le C\abs{\lambda}^{-1}$. 

2. Now invoke Proposition \ref{2t17} to conclude the backward uniqueness. 
\end{proof}

\subsection{Semigroup decomposition}\label{s2.3}
In this subsection, we always assume $W\neq 0$. We would like to apply the following results to $e^{t\Ac}$, if possible. 
\begin{proposition}[Borichev-Tomilov, Theorem 2.4 of \cite{bt10}]\label{BTthm}
Let $e^{t\Acd}$ be a strongly continuous semigroup on a Hilbert space $\Hcd$, generated by $\Acd$. If $i \Rb \cap\spec(\Acd)=\emptyset$, then the following conditions are equivalent:
\begin{align}
\| e^{t \Acd} \Acd^{-1} \|_{\Lc(\Hcd)} &= O(\langle t\rangle^{-\alpha}) & \text{as } t \ra \infty, \\
\| (i\lambda Id -\Acd)^{-1} \|_{\Lc(\Hcd)} &= O(|\lambda|^{1/\alpha}) &\text{as } \lambda \ra \infty. 
\end{align}
\end{proposition}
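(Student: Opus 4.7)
The plan is to follow the classical contour-integral approach of Borichev and Tomilov, noting that the Hilbert-space structure of $\Hcd$ is essential: the analogous Banach-space statement suffers a logarithmic loss. The assumption $i\Rb\cap\spec(\Acd)=\emptyset$ guarantees that $(i\lambda-\Acd)^{-1}$ exists for all $\lambda\in\Rb$, and one checks that either hypothesis together with this spectral gap forces the semigroup $e^{t\Acd}$ to be uniformly bounded on $\Hcd$, placing us in the setting of bounded $C_0$-semigroups.

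For the easier implication, decay to resolvent growth, I would use the Laplace representation
\begin{equation}
(i\lambda-\Acd)^{-1}\Acd^{-1}x = \int_0^\infty e^{-i\lambda t}e^{t\Acd}\Acd^{-1}x\,dt,
\end{equation}
which is absolutely convergent for $\alpha>1$ by the decay hypothesis and can be regularized for smaller $\alpha$ via integration by parts. The resolvent identity $(i\lambda-\Acd)^{-1}=\Acd^{-1}+i\lambda(i\lambda-\Acd)^{-1}\Acd^{-1}$ then transfers this bound to the full resolvent, and interpolation against the trivial $O(1)$ bound coming from semigroup boundedness yields $\|(i\lambda-\Acd)^{-1}\|_{\Lc(\Hcd)}=O(|\lambda|^{1/\alpha})$.

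The harder direction, resolvent growth to decay, is the substance of the theorem. I would begin with the Dunford representation
\begin{equation}
e^{t\Acd}\Acd^{-1}x = \frac{1}{2\pi i}\int_{c-i\infty}^{c+i\infty}e^{t\lambda}(\lambda-\Acd)^{-1}\Acd^{-1}x\,d\lambda
\end{equation}
for $c>0$, shift the contour onto $i\Rb$ (legal by the spectral gap and polynomial resolvent bound), and estimate the resulting oscillatory integral. The crucial Hilbert-space input is Plancherel's theorem, giving a square-function identity of the form
\begin{equation}
\int_\Rb \|(i\lambda-\Acd)^{-1}x\|_{\Hcd}^2\,d\lambda \le C\int_0^\infty \|e^{t\Acd}x\|_{\Hcd}^2\,dt,
\end{equation}
which is unavailable in a general Banach space; this is exactly what makes the exponents match as $t^{-\alpha}\leftrightarrow|\lambda|^{1/\alpha}$ with no logarithmic loss.

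The main obstacle is that the naive oscillatory integral with resolvent growth $|\lambda|^{1/\alpha}$ is divergent in a pointwise sense, so a dyadic frequency splitting is required. One partitions the integral at a threshold $|\lambda|\sim t^{\gamma}$, bounds the low-frequency piece by the pointwise resolvent estimate combined with direct integration, and handles the high-frequency piece via Plancherel applied on the image of $\Acd^{-1}$. Optimizing $\gamma$ to balance the two contributions produces the sharp rate $\langle t\rangle^{-\alpha}$. Along the way, one must justify the contour deformation uniformly in $t$ by checking that the polynomial resolvent growth dominates the exponential factor $|e^{t\lambda}|$ on vertical segments $\cre\lambda\in[0,c]$ with $|\cim\lambda|\to\infty$; this is the technical point where the combination of Hilbertness and the spectral-gap hypothesis is invoked most delicately.
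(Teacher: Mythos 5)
The paper does not prove this proposition: it is a verbatim citation of Borichev--Tomilov \cite{bt10}, Theorem 2.4, used as a black box. So there is no internal proof to compare your sketch against; I will instead comment on the sketch as a summary of the cited argument.

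Your high-level description captures the right ideas -- Plancherel is exactly the Hilbert-space ingredient that removes the logarithmic loss of the Batty--Duyckaerts Banach-space result, and the hard direction does come down to splitting the Fourier side at a $t$-dependent threshold and optimizing. Two points, however, are wrong as stated. First, uniform boundedness of the semigroup is a \emph{standing hypothesis} in Borichev--Tomilov, not something that ``either hypothesis together with the spectral gap'' forces: a polynomial resolvent bound on $i\Rb$ does not by itself imply boundedness of $e^{t\Acd}$, and neither does $\| e^{t\Acd}\Acd^{-1}\|\to 0$. (The paper's statement silently inherits boundedness from Proposition \ref{2t12}, where $e^{t\Acd}$ is shown to be a contraction.) Second, the justification you give for shifting the Dunford contour is confused: on a vertical line $\cre\lambda=c$ the factor $\abs{e^{t\lambda}}=e^{tc}$ is constant in $\cim\lambda$, so there is nothing for the resolvent growth to ``dominate,'' and in any case polynomial \emph{growth} of the resolvent is an obstruction, not a help. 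The actual reason the vertical integral converges is the extra $\Acd^{-1}$ factor, via the identity $(\lambda-\Acd)^{-1}\Acd^{-1}=\lambda^{-1}\bigl((\lambda-\Acd)^{-1}-\Acd^{-1}\bigr)$, and the horizontal connecting segments at height $\pm R$ vanish as $R\to\infty$ because the integrand is $O(R^{1/\alpha-1})$ there. Relatedly, the Plancherel identity you write, $\int_\Rb\|(i\lambda-\Acd)^{-1}x\|^2\,d\lambda\lesssim\int_0^\infty\|e^{t\Acd}x\|^2\,dt$, cannot be applied directly: for a bounded (non-exponentially-decaying) semigroup the right-hand side is infinite, so it must be applied to damped or differenced orbits. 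These are precisely the technical places where Borichev and Tomilov's argument does real work, and a careful write-up would need to fix them.
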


\begin{proposition}[Gearhart-Pr\"{u}ss-Huang, \cite{gea78,pru84,hua85}]\label{GPHthm}
Let $e^{t \Acd}$ be a strongly continuous semigroup on a Hilbert space $\Hcd$ and assume that there exists a positive constant $M>0$ such that $\| e^{t\Acd}\| \leq M$ for all $t \geq 0$. Then there exist $C, c>0$ such that for all $t>0$
\begin{equation}
\|e^{t\Acd}\|_{\Lc(\Hcd)} \leq C e^{-ct},
\end{equation}
if and only if $i \Rb \cap\spec(\Acd)=\emptyset$ and 
\begin{equation}    
\sup_{\lambda \in \Rb} \|(\Acd-i\lambda Id)^{-1} \|_{\Lc(X)} < \infty.
\end{equation}
\end{proposition}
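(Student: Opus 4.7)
The plan is to prove the two directions separately, with the nontrivial converse resting on the Plancherel theorem for Hilbert space-valued functions combined with Datko's theorem on integrability characterizations of exponential stability.

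The forward implication is routine: if $\|e^{t\Acd}\|\le Ce^{-ct}$, then the Laplace representation $(i\lambda-\Acd)^{-1}=\int_0^\infty e^{-i\lambda t}e^{t\Acd}\,dt$ converges absolutely on $\{\cim\lambda<c\}$, so $i\Rb\subset \rho(\Acd)$ and $\sup_{s\in\Rb}\|(is-\Acd)^{-1}\|\le C/c$.

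For the converse, set $K=\sup_{s\in\Rb}\|(is-\Acd)^{-1}\|<\infty$. First I would extend the resolvent analytically to a strip around $i\Rb$: the Neumann series
\begin{equation*}
(\mu+is-\Acd)^{-1}=(is-\Acd)^{-1}\bigl[I+\mu (is-\Acd)^{-1}\bigr]^{-1}
\end{equation*}
converges for $|\mu|<1/(2K)$ and supplies $\|(\lambda-\Acd)^{-1}\|\le 2K$ on $\{|\cre\lambda|<1/(2K)\}$. Next, for $x\in\Hcd$ and $0<\epsilon<1/(2K)$, set $f_\epsilon(t)=e^{-\epsilon t}e^{t\Acd}x\mathbbm{1}_{t\ge 0}$; boundedness of the semigroup gives $f_\epsilon\in L^2(\Rb;\Hcd)$, and its Fourier transform is $\hat f_\epsilon(s)=(\epsilon+is-\Acd)^{-1}x$. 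Plancherel then yields
\begin{equation*}
\int_0^\infty e^{-2\epsilon t}\|e^{t\Acd}x\|^2\,dt=\frac{1}{2\pi}\int_{\Rb}\|(\epsilon+is-\Acd)^{-1}x\|^2\,ds.
\end{equation*}

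The heart of the matter is to control the right-hand side by $C\|x\|^2$ uniformly in small $\epsilon>0$: the pointwise bound $\|(\epsilon+is-\Acd)^{-1}\|\le 2K$ alone is not sufficient, since it does not supply $L^2$ decay in $s$. I would extract the uniform bound by pairing against the adjoint semigroup $e^{t\Acd^*}$, which is bounded on $\Hcd$ and inherits $i\Rb\subset\rho(\Acd^*)$ with the same resolvent bound via $\|(is-\Acd^*)^{-1}\|=\|(-is-\Acd)^{-1*}\|\le K$. Combining the identity $\langle(\epsilon+is-\Acd)^{-1}x,y\rangle=\langle x,(\epsilon-is-\Acd^*)^{-1}y\rangle$ with Plancherel applied to both the primal and the adjoint semigroups, together with a Cauchy--Schwarz argument in the frequency variable, yields $\|\hat f_\epsilon\|_{L^2(\Rb;\Hcd)}\le C\|x\|$ uniformly in $\epsilon$. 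Sending $\epsilon\to 0^+$ by monotone convergence gives $\int_0^\infty\|e^{t\Acd}x\|^2\,dt\le C\|x\|^2$ for every $x\in\Hcd$, and Datko's theorem then delivers the desired uniform exponential decay.

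The main obstacle is precisely this uniform-in-$\epsilon$ frequency estimate in the converse direction: upgrading an $L^\infty$ bound on the operator-valued resolvent to an $L^2$ control along each ray $x\mapsto (\cdot-\Acd)^{-1}x$ is the genuinely nontrivial step, and it is here that the Hilbert space structure (through the adjoint and Plancherel) is essential, which is why the result does not extend to arbitrary Banach spaces.
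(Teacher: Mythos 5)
The paper does not prove this proposition; it is cited as the classical Gearhart--Pr\"uss--Huang theorem, so you are supplying a proof from scratch. Your forward direction is correct, and your overall strategy for the converse (Plancherel for Hilbert-space-valued functions plus Datko) is the right family of ideas. However, the step you yourself flag as the heart of the matter --- the uniform-in-$\epsilon$ bound $\int_{\Rb}\|(\epsilon+is-\Acd)^{-1}x\|^2\,ds\le C\|x\|^2$ --- is not actually proved by the sketch you give, and the mechanism you describe cannot prove it. Plancherel applied to $e^{-\epsilon t}e^{t\Acd}x$, together with $\|e^{t\Acd}\|\le M$, gives $\int_{\Rb}\|(\epsilon+is-\Acd)^{-1}x\|^2\,ds=2\pi\int_0^\infty e^{-2\epsilon t}\|e^{t\Acd}x\|^2\,dt\le \pi M^2\epsilon^{-1}\|x\|^2$, and the same for the adjoint semigroup; Cauchy--Schwarz in $s$ then produces a bound of order $\epsilon^{-1}\|x\|\|y\|$, not $O(1)$. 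In fact the uniform $L^2_s$ bound you want is, via Plancherel, monotone convergence and Datko, \emph{equivalent} to the conclusion, so asserting it without a genuinely new ingredient is circular: you cannot get an $L^2$ in $s$ decay of the single ray $s\mapsto R(\epsilon+is)x$ from the pointwise bound $\|R(\epsilon+is)\|\le 2K$ alone (indeed for a unitary group this ray is not in $L^2$).

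The missing ingredient is to replace the fixed vector $x$ by the full trajectory and exploit Plancherel at the level of the causal convolution operator $\mathcal{L}f(t)=\int_0^t e^{(t-s)\Acd}f(s)\,ds$ on $L^2(\Rb_+;\Hcd)$. The regularized operator $\mathcal{L}_\epsilon f(t)=\int_0^t e^{-\epsilon(t-s)}e^{(t-s)\Acd}f(s)\,ds$ has Fourier multiplier $(\epsilon+is-\Acd)^{-1}$, uniformly bounded by $2K$ for $\epsilon<1/(2K)$ by your Neumann-series step, and in a Hilbert space an $L^\infty$ operator-valued multiplier is exactly an $L^2$-bounded Fourier multiplier --- this is the genuine use of Plancherel and of the Hilbert structure, and it yields $\|\mathcal{L}\|_{L^2\to L^2}\le 2K$ after letting $\epsilon\to 0$. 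Feeding $f(s)=\mathbbm{1}_{[0,1]}(s)e^{s\Acd}x$ into $\mathcal{L}$ gives $\mathcal{L}f(t)=\min(t,1)\,e^{t\Acd}x$, whence $\int_1^\infty\|e^{t\Acd}x\|^2\,dt\le 4K^2M^2\|x\|^2$; adding the trivial bound on $[0,1]$ produces the Datko integrability uniformly in $x$, and Datko's theorem closes the argument. I would encourage you to rewrite the converse around the boundedness of $\mathcal{L}$ rather than a direct $L^2_s$ bound on $R(\epsilon+is)x$, since the latter is exactly what is unavailable a priori.
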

An issue with applying these is that $\Ac$ can have spectrum at $0$. The outline for Section \ref{s2.3} is to define a semigroup generator $\Acd$ with no spectrum at 0 and which provides energy decay information for $e^{t\Ac}$. We then establish an equivalence of resolvent estimates for $\Acd$ and $\Pl$, which we use to prove Propositions \ref{polyprop} using Propositions \ref{BTthm} and \ref{GPHthm}. We follow the strategy of \cite{aln14} to separate the zero-frequency modes from others. 

\begin{definition}[Spectral decomposition]
Assume $W\neq 0$, then $\Ac$ has a simple pole at $0$. Let $\Pi_0$ be the Riesz projector of $\Ac$ that projects $\mathcal{H}$ onto $\ker \Ac$, given by the spectral resolution
\begin{equation}
\Pi_0=\frac{1}{2\pi i} \int_{\gamma} (z \id -\Ac)^{-1} dz,
\end{equation}
where $\gamma$ is a small circle around $0$ in $\C$ containing $0$ as the only eigenvalue of $\Ac$ in its interior. Consider the range of $\Ac$, 
\begin{equation}
\Hcd=\Ac(\Dc)=\{(u,v)\in \Hc: \av(Wu+v)=0\},
\end{equation}
a codimension-1 subspace of $\mathcal{H}$, equipped with the norm $\|(u, v)\|_{\Hcd}^2 := \ltwo{\nabla u}^2 + \ltwo{v}^2$. Note $\Pi_\bullet=\id-\Pi_0$ projects $\mathcal{H}$ onto $\Hcd$. The Riesz projectors non-orthogonally decomposes $\mathcal{H}$ as $\Pi_0 \Hc\oplus\Hcd$. Concretely, let $\abs{M}$ be the volume of $M$ and for any $(u,v)\in \mathcal{H}$, the Riesz projectors are 
\begin{equation}
\Pi_0 (u,v)=(\av(W)^{-1}\av (W u+v), 0), \ \Pi_\bullet(u,v)=(u-\av(W)^{-1}\av (W u+v), v),
\end{equation}
where $\av (u)=\frac{1}{\abs{M}}\int_M u\ d M$, the average of $u$ over $M$, and $\av(W)>0$ since $W\neq 0$. Note $\Pi_0\Pi_\bullet=\Pi_\bullet\Pi_0=0$, and thus $\Hcd=\ker \Pi_0$, $\Pi_0 \mathcal{H}=\ker \Ac=\ker\Pi_\bullet=\ker \Delta\times \{0\}=\spa \{(1,0)\}$, and $\|\Pi_\bullet (u,v)\|_{\Hcd}=\|(u,v)\|_{\Hcd}$ for any $(u,v)\in \mathcal{H}$. Let
\begin{equation}
\Acd=\Ac|_{D(\Acd)}=
\begin{pmatrix}
0 & \id\\
\Delta & -W
\end{pmatrix}:D(\Acd)\rightarrow\Hcd,
\end{equation}
where $\Dcd=D(\Acd)=\Pi_\bullet D(\Ac)=\Dc\cap \Hcd$, equipped with the norm
\begin{equation}
\|(u,v)\|_{D(\Acd)}^2=\|\nabla u\|_{L^2}^2+\|v\|_{H^1}^2+\|-\Delta u+Wv\|_{L^2}^2.
\end{equation}
Then $\Dc=\Dcd\oplus \Pi_0 \Dc$ and $\Pi_0 \Dc=\Pi_0 \mathcal{H}=\ker \Ac$. Note that the nature of spectral resolution implies $\Pi_\bullet\Ac=\Ac \Pi_\bullet=\Acd \Pi_\bullet$, $\Pi_0\Ac=\Ac\Pi_0=0$ and thus $\Ac=\Acd\Pi_\bullet$. 
\end{definition}

\begin{remark}
Without assuming $W\neq 0$, we can always orthogonally decompose $\Hc=\ker\Ac\oplus (\ker \Ac)^\perp$. Doing this will not invalidate most of the theorems in this section nor the main results, but we point out that it is natural to use the non-orthogonal decomposition $\ker\Ac\oplus \Hcd$. When $\Ac$ is not normal, we do not expect its eigenspaces with  distinct eigenvalues to be  orthogonal to each other. In such a case, the lack of orthogonality of $\Pi_0$ is a known phenomenon: see further in \cite[Proposition 6.3]{hs96}, \cite[Remarks(1), p. 84]{dz19}, \cite[Proposition 50.2]{heu82}. 
\end{remark}

\begin{lemma}[Spectral and resolvent equivalence between $\Acd$ and $\Ac$]\label{2t11}
Let $\mu\in \mathbb{C}\setminus \{0\}$, $W\neq 0$. Then there exists $C>0$ independent of $\mu$ such that the following are true:
\begin{enumerate}
\item $\Acd:\Dcd\rightarrow\Hcd$ is bijective. 
\item $\Acd-\mu:\Dcd\rightarrow\Hcd$ is bijective if and only if $\Ac-\mu:\Dc\rightarrow\Hc$ is bijective. 
\item On $\Hcd$, the $\Hcd$-norm and $\Hc$-norm are equivalent. 
\item If both $\Acd-\mu$ and $\Ac-\mu$ are bijective, then 
\begin{equation}
C^{-1}\|(\Acd-\mu)^{-1}\|_{\mathcal{L}(\Hcd)}\le \|(\Ac-\mu)^{-1}\|_{\mathcal{L}(\Hc)}\le C\left(\|(\Acd-\mu)^{-1}\|_{\mathcal{L}(\Hcd)}+\abs{\mu}^{-1}\right).
\end{equation}
\end{enumerate}
\end{lemma}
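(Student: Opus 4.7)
The four parts follow from a common package: the spectral structure $\Ac=\Acd\Pi_\bullet$, $\Pi_0\Ac=\Ac\Pi_0=0$ (available because $0$ is a simple pole of $\Ac$, by Lemma \ref{2t9}(4)), together with one analytic input---a Poincar\'e-type estimate on $\Hcd$.

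For part (1), note that Lemma \ref{2t9}(3) gives $\Ac(\Dc)=\Hcd$ and $\ker \Ac=\spa\{(1,0)\}=\Pi_0\Hc$. Since $\Dcd=\Dc\cap \ker\Pi_0$, the kernel of $\Acd$ is $\ker\Ac\cap\ker\Pi_0=\{0\}$, and surjectivity onto $\Hcd$ transfers from $\Ac$ directly.

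For part (3), the estimate $\|\nabla u\|_{L^2}^2+\|v\|_{L^2}^2\le \|(u,v)\|_{\Hc}^2$ is automatic, so I only need to bound $\|u\|_{L^2}$ on $\Hcd$ by the $\Hcd$-norm. Using Poincar\'e one has $\|u-\av(u)\|_{L^2}\le C\|\nabla u\|_{L^2}$ and $\|u-\av(u)\|_{H^{1/2+}}\le C\|\nabla u\|_{L^2}$. The constraint $\av(Wu+v)=0$ rewrites as
\begin{equation}
|M|\av(W)\av(u)=-\int v\,dM-\int W(u-\av(u))\,dM.
\end{equation}
The second integral is bounded by $\|\sqrt{W}\|_{L^2}\|\sqrt{W}(u-\av(u))\|_{L^2}\le C\|u-\av(u)\|_{H^{1/2+}}$ via Lemma \ref{2t3}, which is then controlled by $\|\nabla u\|_{L^2}$. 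Since $\av(W)>0$ (as $W\neq 0$), this yields $|\av(u)|\le C(\|v\|_{L^2}+\|\nabla u\|_{L^2})$, and the triangle inequality gives $\|u\|_{L^2}\le C(\|\nabla u\|_{L^2}+\|v\|_{L^2})$. I expect this to be the main analytic step; the rest is algebraic. I should also verify along the way that $\Pi_0,\Pi_\bullet:\Hc\to\Hc$ are bounded, which requires $|\av(Wu)|\le C\|u\|_{H^1}$---again immediate from Lemma \ref{2t3}.

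For part (2), I will use the identities $\Ac\Pi_\bullet=\Acd\Pi_\bullet$ and $\Pi_0\Ac=\Ac\Pi_0=0$. If $\Acd-\mu:\Dcd\to\Hcd$ is bijective with $\mu\neq 0$, the candidate inverse
\begin{equation}
(\Ac-\mu)^{-1}=(\Acd-\mu)^{-1}\Pi_\bullet-\mu^{-1}\Pi_0
\end{equation}
is checked to be a two-sided inverse block by block. Conversely, if $\Ac-\mu$ is bijective on $\Dc$ and $y\in\Hcd$, applying $\Pi_0$ to $(\Ac-\mu)x=y$ gives $-\mu\Pi_0 x=0$, hence $x\in\Dcd$ and the restriction of $(\Ac-\mu)^{-1}$ to $\Hcd$ inverts $\Acd-\mu$.

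For part (4), I read both inequalities off the same block decomposition. The upper bound follows by applying the displayed formula for $(\Ac-\mu)^{-1}$: boundedness of $\Pi_\bullet,\Pi_0$ on $\Hc$ and the norm equivalence from part (3) give
\begin{equation}
\|(\Ac-\mu)^{-1}\|_{\mathcal{L}(\Hc)}\le C\bigl(\|(\Acd-\mu)^{-1}\|_{\mathcal{L}(\Hcd)}+|\mu|^{-1}\bigr).
\end{equation}
For the lower bound, given $y\in\Hcd$ set $x=(\Ac-\mu)^{-1}y=(\Acd-\mu)^{-1}y$; then
\begin{equation}
\|x\|_{\Hcd}\le C\|x\|_{\Hc}\le C\|(\Ac-\mu)^{-1}\|_{\mathcal{L}(\Hc)}\|y\|_{\Hc}\le C\|(\Ac-\mu)^{-1}\|_{\mathcal{L}(\Hc)}\|y\|_{\Hcd},
\end{equation}
again using part (3) at both ends. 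The only nontrivial ingredient throughout is the Poincar\'e-type estimate in part (3); everything else is a bookkeeping exercise in the spectral decomposition $\Hc=\Pi_0\Hc\oplus\Hcd$.
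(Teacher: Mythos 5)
Your proof is correct, and parts (1), (2), and (4) follow essentially the same algebraic bookkeeping as the paper (block decomposition via $\Pi_0, \Pi_\bullet$, the identities $\Ac\Pi_0=\Pi_0\Ac=0$, $\Ac\Pi_\bullet=\Acd\Pi_\bullet$, and the explicit formula $(\Ac-\mu)^{-1}=(\Acd-\mu)^{-1}\Pi_\bullet-\mu^{-1}\Pi_0$ is exactly the relation the paper verifies component by component). The one place you genuinely diverge is part (3). The paper observes that the identity map from $(\Hcd,\|\cdot\|_\Hc)$ to $(\Hcd,\|\cdot\|_{\Hcd})$ is a continuous bijection and invokes the open mapping theorem to get the reverse bound. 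That argument is slicker but quietly relies on $(\Hcd,\|\cdot\|_{\Hcd})$ being complete, and verifying that completeness in fact reduces to the same Poincar\'e-type control of $\av(u)$ through the constraint $\av(Wu+v)=0$ that you carry out explicitly. Your direct estimate — using $\av(W)>0$, $\sqrt{W}\in L^2$, and Lemma~\ref{2t3} to bound $\int W(u-\av(u))$ by $\|\nabla u\|_{L^2}$ — buys an explicit constant, makes the dependence on $W\neq 0$ and on the normally-$L^1$ hypothesis transparent, and sidesteps the unstated completeness check. Along the way you also correctly note that boundedness of $\Pi_0,\Pi_\bullet$ on $\Hc$ needs $|\av(Wu)|\lesssim\|u\|_{H^1}$, again via Lemma~\ref{2t3}; the paper treats this as background. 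So: same skeleton, but a more elementary and self-contained analytic engine for the norm equivalence.
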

\begin{proof}
1. Note that from Lemma \ref{2t9} we know $\Ac:\Dc=\Dcd\oplus \ker \Ac\rightarrow \Hcd$ is surjective and thus $\Acd:\Dcd\rightarrow \Hcd$ is bijective. Note that $\|(u,v)\|_{\Hcd}\le \|(u,v)\|_{\Hc}$ and $(\Hcd,\|\cdot\|_\Hc)$ is continuously embedded in $(\Hcd,\|\cdot\|_{\Hcd})$. Since the embedding is a bijective continuous map, it is further an open map and admits a continuous inverse. This implies that the norms are equivalent and we write $\|\cdot\|_{\Hcd}\le \|\cdot\|_{\Hc}\le C\|\cdot\|_{\Hcd}$. 

2. Let $\mu\in\mathbb{C}\setminus \{0\}$. Assume $\Acd-\mu:\Dcd\rightarrow\Hcd$ is bijective. Note that $\Pi_\bullet \Ac=\Acd \Pi_\bullet$. Then consider the decomposition
\begin{equation}\label{2l27}
\Ac-\mu=(\Pi_\bullet+\Pi_0)\Ac-\mu=(\Acd-\mu)\Pi_\bullet+\Pi_0(\Ac-\mu).
\end{equation}
For any $V\in \mathcal{H}$, since $\Acd-\mu$ is surjective, there exists $\dot U\in \Dcd$ such that $(\Acd-\mu)\dot U=\Pi_\bullet V$. Then
\begin{equation}
(\Ac-\mu)\left(\dot U-\mu^{-1}\Pi_0 V\right)=(\Acd-\mu) \dot U-\Pi_0(\Ac-\mu) \mu^{-1}\Pi_0 V=\Pi_\bullet V+\Pi_0 V=V,
\end{equation}
where we used $\Ac\Pi_0=0$. Thus $\Acd-\mu$ is surjective. To show it is injective, assume $(\Ac-\mu)U=0$ for some $U\in \Dc$. Then
\begin{equation}
0=\Pi_\bullet(\Ac-\mu)U=(\Acd-\mu)\Pi_\bullet U.
\end{equation}
As $(\Acd-\mu)$ is injective, $\Pi_\bullet U=0$. Thus $U=\Pi_0 U$. Then
\begin{equation}
0=\Pi_0(\Ac-\mu)U=-\mu U. 
\end{equation}
As $\mu\neq 0$, $U=0$ and $\Ac-\mu$ is also injective. 

3. Assume $\Ac-\mu:\Dc\rightarrow\Hc$ is bijective from $\Dc=\Dcd\oplus \Pi_0 \Dc$ to $\mathcal{H}=\Hcd \oplus \Pi_0 H$. Note that $\Ac=0$ on $\Pi_0 \Dc$ and $\Ac-\mu$ maps $\Pi_0 \Dc$ to $\Pi_0 \Dc=\Pi_0 H$ bijectively. Thus $\Ac-\mu$ is bijective from $\Dcd$ to $\dot H$. Eventually observe that $\Acd-\mu=\Pi_\bullet (\Ac-\mu)$ on $\Hcd$ to conclude it is bijective. 

4. Now assume both $\Acd-\mu:\Dcd\rightarrow\Hcd$ and $\Ac-\mu:\Dc\rightarrow\Hc$ are bijective. Fix $\dot V\in\Dcd$, and let $\dot U$ be the unique element in $\Hcd$ such that $(\Acd-\mu) \dot U=\dot V$. Since $\dot V\in \Dc$, there exists a unique $U\in \Hc$ such that $(\Ac-\mu) U=\dot V$. Moreover,
\begin{equation}
(\Acd-\mu)\Pi_\bullet U=\Pi_\bullet (\Ac-\mu) U=\Pi_\bullet \dot V=\dot V,
\end{equation}
implies $\dot U=\Pi_\bullet U$. Thus
\begin{equation}
\|\dot U\|_{\Hcd}\le \|\dot U\|_{\Hc}\le C\|U\|_{\Hc}\le C\|(\Ac-\mu)^{-1}\|_{\mathcal{L}(\Hc)}\|\dot V\|_{\Hc}\le C \|(\Ac-\mu)^{-1}\|_{\mathcal{L}(\Hc)}\|\dot V\|_{\Hcd}
\end{equation}
and $\|(\Acd-\mu)^{-1}\|_{\mathcal{L}(\Hcd)}\le C\|(\Ac-\mu)^{-1}\|_{\mathcal{L}(\Hc)}$. On another hand, let $(\Ac-\mu)U=V$ for $U\in \Dc$, $V\in \mathcal{H}$. Then
\begin{equation}
(\Acd-\mu)\Pi_\bullet U= \Pi_\bullet (\Ac-\mu)U=\Pi_\bullet V,
\end{equation}
implies $\|\Pi_\bullet U\|_{\mathcal{H}}\le \|(\Acd-\mu)^{-1}\|_{\mathcal{L}(\Hcd)}\|\Pi_\bullet V\|_{\mathcal{H}}$. Meanwhile,
\begin{equation}
\Pi_0 V=\Pi_0(\Ac-\mu)U=-\mu\Pi_0U,
\end{equation}
where we used $\Pi_0\Ac=0$. Thus $\abs{\mu} \|\Pi_0 U\|_{\mathcal{H}}\le \|\Pi_0 V\|_\mathcal{H}$, and
\begin{equation}
\|U\|_{\mathcal{H}}\le C\left(\|(\Acd-\mu)^{-1}\|_{\mathcal{L}(\Hcd)}+\abs{\mu}^{-1}\right)\|V\|_{\mathcal{H}},
\end{equation}
as we want. 
\end{proof}

We now show that this $\Acd$ indeed generates a contraction semigroup that provides energy decay information about $e^{t\Ac}$. 
\begin{proposition}[Semigroup decomposition]\label{2t12}
Let $W$ be normally $L^1$ and $W\neq 0$. Then the following are true:
\begin{enumerate}[wide]
\item The generator $\Acd:\Dcd\rightarrow \Hcd$ is maximally dissipative.
\item The generator $\Acd$ generates a contraction semigroup $e^{t\Acd}$ on $\Hcd$.
\item The strongly continuous semigroup generated by $\Ac$ on $\Hc$ can be decomposed as
\begin{equation}\label{2l1}
e^{t\Ac}=e^{t\Acd} \Pi_\bullet+\Pi_0,
\end{equation}
\item We have $\Pi_\bullet e^{t\Ac}=e^{t\Acd}\Pi_\bullet$.
\item There exists $C>0$ such that $\|e^{t\Ac}\|_{\mathcal{L}(\Hc)}\le C$ for all $t\ge 0$. 
\end{enumerate}
\end{proposition}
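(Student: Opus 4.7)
The plan is to establish (1) and (2) by Lumer--Phillips applied to $\Acd$, and then deduce (3)--(5) from the Riesz spectral decomposition $\Hc=\Pi_0\Hc\oplus\Hcd$ together with the identity $\Ac=\Acd\Pi_\bullet=\Pi_\bullet\Ac$ already recorded before the statement. For the dissipativity of $\Acd$ in (1), I would take $(u,v)\in\Dcd$, observe that $\sqrt{W}v\in L^2$ by Lemma \ref{2t3} (so $\langle Wv,v\rangle=\|\sqrt{W}v\|^2$ is well-defined) and that $\Delta u\in H^{-1}$ pairs with $v\in H^1$ via $\langle \Delta u,v\rangle=-\langle\nabla u,\nabla v\rangle$, to compute
\begin{equation}
\langle\Acd(u,v),(u,v)\rangle_{\Hcd}=\langle\nabla v,\nabla u\rangle+\langle\Delta u-Wv,v\rangle=2i\cim\langle\nabla v,\nabla u\rangle-\|\sqrt{W}v\|^2.
\end{equation}
Taking the real part gives $\cre\langle\Acd U,U\rangle_{\Hcd}=-\|\sqrt{W}v\|^2\le 0$, hence $\Acd$ is dissipative.

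For maximality in (1), I pick any real $\mu>0$; Corollary \ref{2t10} says $\Ac-\mu:\Dc\rightarrow\Hc$ is bijective, and Lemma \ref{2t11}(2) then forces $\Acd-\mu:\Dcd\rightarrow\Hcd$ to be bijective, yielding maximal dissipativity. For (2), I would verify that $\Dcd$ is dense in $\Hcd$: given $\dot U\in\Hcd$ and $\epsilon>0$, use density of $\Dc$ in $\Hc$ from Proposition \ref{2t18} to pick $U\in\Dc$ with $\|U-\dot U\|_\Hc<\epsilon$; then $\Pi_\bullet U\in\Dcd$ and by continuity of $\Pi_\bullet$ together with the norm equivalence in Lemma \ref{2t11}(3), $\Pi_\bullet U$ approximates $\dot U$ in $\Hcd$. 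The Lumer--Phillips theorem (as in \cite[Theorem II.3.15]{en99}) then produces the contraction semigroup $e^{t\Acd}$ on $\Hcd$.

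For (3)--(5), the central observation is that both $\Pi_0\Hc=\ker\Ac$ and $\Hcd=\ker\Pi_0$ are closed $\Ac$-invariant subspaces, because the Riesz projector $\Pi_0$ commutes with $\Ac$ and with its resolvent. On the one-dimensional invariant subspace $\Pi_0\Hc$ the generator vanishes, so $e^{t\Ac}|_{\Pi_0\Hc}=\id$. On the invariant closed subspace $\Hcd$, the restriction of $e^{t\Ac}$ is a strongly continuous semigroup whose generator is precisely $\Ac|_{\Dc\cap\Hcd}=\Acd$; by uniqueness of the generator it must coincide with $e^{t\Acd}$ constructed in (2). Decomposing $U=\Pi_0 U+\Pi_\bullet U$ yields
\begin{equation}
e^{t\Ac}U=e^{t\Ac}\Pi_0 U+e^{t\Ac}\Pi_\bullet U=\Pi_0 U+e^{t\Acd}\Pi_\bullet U,
\end{equation}
which is (3); applying $\Pi_\bullet$ on the left and using $\Pi_\bullet\Pi_0=0$ together with $\Pi_\bullet=\id$ on $\Hcd$ gives (4). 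Finally, (5) follows from the triangle inequality, the contractivity of $e^{t\Acd}$, boundedness of $\Pi_0,\Pi_\bullet$ on $\Hc$, and the norm equivalence on $\Hcd$ from Lemma \ref{2t11}(3). The main obstacle is the justification that the restriction of $e^{t\Ac}$ to $\Hcd$ really is a $C_0$-semigroup with generator exactly $\Acd$; this requires that $\Hcd$ is invariant under the resolvent of $\Ac$, which is exactly the content of Lemma \ref{2t11} combined with the spectral projector identity $\Pi_\bullet(\Ac-\mu)^{-1}=(\Ac-\mu)^{-1}\Pi_\bullet$.
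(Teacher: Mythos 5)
Your proposal is correct and follows essentially the same route as the paper: Lumer--Phillips for $\Acd$ (dissipativity from $\cre\langle\Acd U,U\rangle_{\Hcd}=-\|\sqrt{W}v\|^2\le0$, maximality from Corollary \ref{2t10} plus Lemma \ref{2t11}), then the decomposition $e^{t\Ac}=e^{t\Ac}\Pi_\bullet+e^{t\Ac}\Pi_0$ reduced to $e^{t\Acd}\Pi_\bullet+\Pi_0$ via the invariant-subspace structure, with boundedness from contractivity and the norm equivalence of Lemma \ref{2t11}. You add two small justifications the paper leaves implicit --- density of $\Dcd$ in $\Hcd$ and invariance of $\Hcd$ under the resolvent of $\Ac$ --- both of which are correct and tighten the argument without changing its substance.
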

\begin{proof}
1. We show $\Acd: \Dcd\rightarrow \Hcd$ is maximally dissipative. Firstly note that Corollary \ref{2t10} implies $\Ac-1$ is bijective from $\Dc=\Dcd\oplus \Pi_0 \Dc$ to $\mathcal{H}=\Hcd \oplus \Pi_0 H$. Apply Lemma \ref{2t11} to conclude that $\Acd -1$ is bijective from $\Dcd$ to $\Hcd$. Then it is straightforward to compute for any $(u,v)\in \Dcd$, 
\begin{equation}
\cre{\left\langle \Acd(u,v), (u,v)\right\rangle_{\Hcd}}=-\int_M W \abs{v}^2\le 0,
\end{equation}
which demonstrates the dissipative nature of $\Acd$, and so by the Lumer-Phillips theorem as in \cite[Theorem II.3.15, p.83]{en99}, $\Acd$ generates a contraction semigroup $e^{t\Acd}$ on $\Hcd$, and $\|e^{t\Acd}\|_{\Hcd}\le 1$ for all $t\ge 0$. 

2. We begin with the initial decomposition
\begin{equation}
e^{t\Ac}=e^{t\Ac}\Pi_\bullet+e^{t\Ac}\Pi_0,
\end{equation}
and simplify both terms. Consider for the first term that on $\Hcd$, 
\begin{equation}
\partial_t e^{t\Ac}=\Ac=\Acd, \ e^{t\Ac}|_{t=0}=\id,
\end{equation}
Thus $\Acd$ generates $e^{t\Ac}$: by the uniqueness of the generator we know $e^{t\Ac}=e^{t\Acd}$ on $\Hcd$. On another hand, for the third term, on $\Pi_0\mathcal{H}=\ker \Ac$, we have
\begin{equation}
\partial_t e^{t\Ac}=\Ac=0, \ e^{t\Ac}|_{t=0}=\id,
\end{equation}
and thus $e^{t\Ac}\Pi_0=\Pi_0$. The above observations give the desired decomposition \eqref{2l1}. Apply $\Pi_\bullet$ to \eqref{2l1} and note $\Pi_\bullet\Pi_0=0$ to obtain $\Pi_\bullet e^{t\Ac}=e^{t\Acd}\Pi_\bullet$.

3. For the boundedness of $e^{t\Ac}$ on $\Hc$, note for any $U\in \Hc$, 
\begin{equation}
\|e^{t\Ac}U\|_{\Hc}\le C\|e^{t\Acd} \Pi_\bullet U\|_{\Hcd} +C\|\Pi_0 U\|_{\Hc}\le C\|U\|_{\Hc},
\end{equation}
where we used $\Hcd$-norm and $\Hc$-norm are equivalent on $\Hcd$, and $\|e^{t\Acd}\|_{\mathcal{L}(\Hcd)}\le 1$. 
\end{proof}

\begin{proposition}[Backward uniqueness for $e^{t\Acd}$]\label{2t13}
Let $W$ be normally $L^p$ for $p\in(1,\infty]$. Then $e^{t\Acd}$ is backward unique, that is, if $e^{t\Acd}(u,v)=0$ at some $t>0$, then $(u,v)=0$. 
\end{proposition}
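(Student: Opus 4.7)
The cleanest route is to deduce the backward uniqueness of $e^{t\Acd}$ from the backward uniqueness of the full semigroup $e^{t\Ac}$ already established in Proposition \ref{2t7}, using the semigroup decomposition of Proposition \ref{2t12}. The idea is that on $\Hcd$ the two semigroups agree: the ``zero mode'' piece $\Pi_0$ simply vanishes there, so information about $e^{t\Acd}$ on $\Hcd$ transfers directly to information about $e^{t\Ac}$ on the same vector viewed inside $\Hc$.

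\textbf{Step 1.} Fix $(u,v)\in\Hcd=\ker \Pi_0$ and suppose $e^{t\Acd}(u,v)=0$ for some $t>0$. Since $\Pi_0(u,v)=0$ and $\Pi_\bullet(u,v)=(u,v)$, the decomposition
\begin{equation}
e^{t\Ac}=e^{t\Acd}\Pi_\bullet+\Pi_0
\end{equation}
from Proposition \ref{2t12}(3) applied to $(u,v)$ gives $e^{t\Ac}(u,v)=e^{t\Acd}(u,v)+0=0$.

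\textbf{Step 2.} Since $(u,v)\in\Hcd\subset\Hc$ and $e^{t\Ac}(u,v)=0$, Proposition \ref{2t7} (applied with the normally $L^p$ assumption for $p\in(1,\infty]$) forces $(u,v)=0$ in $\Hc$, and hence in $\Hcd$. This is exactly backward uniqueness of $e^{t\Acd}$.

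\textbf{Remark on difficulty.} There is essentially no obstacle: everything needed was packaged in Propositions \ref{2t7} and \ref{2t12}, so the argument is a one-line reduction. If one wished to avoid going through $e^{t\Ac}$ and instead apply the Lasiecka--Renardy--Triggiani criterion (Proposition \ref{2t17}) directly to $\Acd$, one would combine the resolvent equivalence $\|(\Acd-\mu)^{-1}\|_{\mathcal{L}(\Hcd)}\le C\|(\Ac-\mu)^{-1}\|_{\mathcal{L}(\Hc)}$ from Lemma \ref{2t11}(4) with the ray bound $\|(\Ac+i\lambda)^{-1}\|_{\mathcal{L}(\Hc)}\le C|\lambda|^{-1}$ along $-i\lambda=|\lambda|e^{i\theta}$, $\theta\in(\pi/2,3\pi/2)$ proved in Step 1 of Proposition \ref{2t7}; this gives the hypotheses of Proposition \ref{2t17} for $\Acd$ on $\Hcd$ and yields the same conclusion. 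Either way, no new estimate is needed.
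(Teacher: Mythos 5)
Your main argument (Steps 1--2) is correct, but it takes a genuinely different route from the paper's. The paper's proof re-applies the Lasiecka--Renardy--Triggiani criterion (Proposition \ref{2t17}) directly to $\Acd$, transferring the ray bounds $\|(\Ac+i\lambda)^{-1}\|_{\mathcal{L}(\Hc)}\le C|\lambda|^{-1}$ to $\|(\Acd+i\lambda)^{-1}\|_{\mathcal{L}(\Hcd)}\le C|\lambda|^{-1}$ via the resolvent equivalence in Lemma \ref{2t11}(4) --- precisely the alternative you sketch in your closing remark. Your primary argument instead avoids re-verifying any resolvent hypotheses: you take $(u,v)\in\Hcd=\ker\Pi_0$, use $\Pi_\bullet(u,v)=(u,v)$ and $\Pi_0(u,v)=0$ together with the decomposition $e^{t\Ac}=e^{t\Acd}\Pi_\bullet+\Pi_0$ of Proposition \ref{2t12}(3) to conclude $e^{t\Ac}(u,v)=e^{t\Acd}(u,v)=0$, and then invoke the already-proved backward uniqueness of $e^{t\Ac}$ (Proposition \ref{2t7}). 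This is logically simpler: it makes the backward uniqueness of $e^{t\Acd}$ a one-line corollary of that of $e^{t\Ac}$, at the cost of routing through the bigger space $\Hc$ rather than treating $\Acd$ on $\Hcd$ self-containedly. Both arguments rely implicitly on the standing assumption $W\not\equiv 0$ in Section \ref{s2.3}, which is needed for $\Acd$, $\Pi_0$, $\Pi_\bullet$ to be defined; your proof inherits that assumption correctly.
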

\begin{proof}
Similar to Proposition \ref{2t7}: Lemma \ref{2t11} implies on the spectral region of interest, the resolvents $\|(\Ac+i\lambda)^{-1}\|_{\mathcal{L}(\Hc)}$ and $\|(\Acd+i\lambda)^{-1}\|_{\mathcal{L}(\Hcd)}$ obey the same bounds. 
\end{proof}

\begin{proof}[Proof of Theorem \ref{backwardthm}]
Part 1 immediately follows from Proposition \ref{2t7}. To prove the backward uniqueness for the energy, assume $E(u,t)=0$ at some $t>0$. This implies
\begin{equation}
0=\|e^{t\Ac}(u_0, u_1)\|_{\Hcd}=\|\Pi_\bullet e^{t\Ac}(u_0, u_1)\|_{\Hcd}=\|e^{t\Acd}\Pi_\bullet (u_0, u_1)\|_{\Hcd},
\end{equation}
where we used that $\|\Pi_\bullet (u,v)\|_{\Hcd}=\|(u,v)\|_{\Hcd}$ for any $(u,v)\in \mathcal{H}$ and Proposition \ref{2t12}. Proposition \ref{2t13} implies $\Pi_\bullet (u_0,u_1)=0$, and thus
\begin{equation}
E(u,0)^{\frac12}=\frac{1}{2}\|(u_0, u_1)\|_{\Hcd}=\frac{1}{2}\|\Pi_\bullet(u_0, u_1)\|_{\Hcd}=0,
\end{equation}
as we want in Part 2. 
\end{proof}

\begin{lemma}[Resolvent equivalence on the real line]\label{2t1}
Let $\alpha\ge -1$ and $\lambda_0> 0$. The following are equivalent:
\begin{enumerate}[wide]
\item There exists $C>0$ such that for all $\lambda\in\mathbb{R}$ with $\abs{\lambda}\ge \lambda_0$ we have
\begin{equation}\label{2l10}
\|P_\lambda^{-1}\|_{L^2\rightarrow L^2}\le C\langle \lambda\rangle^{\alpha}.
\end{equation}
\item There exists $C>0$ such that for all $\lambda\in\mathbb{R}$ with $\abs{\lambda}\ge \lambda_0$ we have
\begin{equation}\label{2l11}
\| (\Acd+i\lambda)^{-1}\|_{\mathcal{L}(\Hcd)} \leq C\langle \lambda\rangle^{\alpha+1}.
\end{equation}
\end{enumerate}
\end{lemma}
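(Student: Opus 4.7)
The plan is to chain together three equivalences already established in this section: Lemma \ref{2t8} converting between bounds on $P_\lambda^{-1}$ and $(\Ac+i\lambda)^{-1}$, Lemma \ref{2t11} converting between bounds on $(\Ac+i\lambda)^{-1}$ and $(\Acd+i\lambda)^{-1}$, and part 3 of Proposition \ref{maplemma} upgrading an $L^2\to L^2$ bound on $P_\lambda^{-1}$ to an $L^2\to H^1$ bound at the cost of one factor of $\langle \lambda\rangle$. The crucial observation that makes this clean on the real line is that for $\lambda\in\mathbb{R}$ we have $P_{-\bar\lambda}=P_{-\lambda}=P_\lambda^*$, so $\|P_{-\lambda}^{-1}\|_{L^2\to L^2}=\|P_\lambda^{-1}\|_{L^2\to L^2}$ by duality, and the same $\langle\lambda\rangle^{\alpha+1}$-bound on the $L^2\to H^1$ norm applies to both. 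Bijectivity of $\Ac+i\lambda$ for $\lambda\in\mathbb{R}\setminus\{0\}$ needed to invoke Lemma \ref{2t8} and Lemma \ref{2t11} is guaranteed by Corollary \ref{2t10}.

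\textbf{Forward direction.} Assume \eqref{2l10}. Applying part 3 of Proposition \ref{maplemma} to $u=P_\lambda^{-1}f$ gives
\begin{equation}
\|P_\lambda^{-1} f\|_{H^1}^2\le C\langle\lambda\rangle^2\|P_\lambda^{-1} f\|_{L^2}^2+C\|f\|_{L^2}^2,
\end{equation}
whence $\|P_\lambda^{-1}\|_{L^2\to H^1}\le C\langle\lambda\rangle^{\alpha+1}$, and the same bound for $\|P_{-\lambda}^{-1}\|_{L^2\to H^1}$. Plugging these into part 2 of Lemma \ref{2t8}, and using $|\lambda|^{-1}\langle\lambda\rangle\le C$ for $|\lambda|\ge\lambda_0$, gives $\|(\Ac+i\lambda)^{-1}\|_{\mathcal{L}(\Hc)}\le C\langle\lambda\rangle^{\alpha+1}$. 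Finally part 4 of Lemma \ref{2t11} yields
\begin{equation}
\|(\Acd+i\lambda)^{-1}\|_{\mathcal{L}(\Hcd)}\le C\|(\Ac+i\lambda)^{-1}\|_{\mathcal{L}(\Hc)}\le C\langle\lambda\rangle^{\alpha+1}.
\end{equation}

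\textbf{Reverse direction.} Assume \eqref{2l11}. Part 4 of Lemma \ref{2t11} gives
\begin{equation}
\|(\Ac+i\lambda)^{-1}\|_{\mathcal{L}(\Hc)}\le C\left(\|(\Acd+i\lambda)^{-1}\|_{\mathcal{L}(\Hcd)}+|\lambda|^{-1}\right)\le C\langle\lambda\rangle^{\alpha+1}+C|\lambda|^{-1},
\end{equation}
and then part 1 of Lemma \ref{2t8} yields
\begin{equation}
\|P_\lambda^{-1}\|_{L^2\to L^2}\le |\lambda|^{-1}\|(\Ac+i\lambda)^{-1}\|_{\mathcal{L}(\Hc)}\le C|\lambda|^{-1}\langle\lambda\rangle^{\alpha+1}+C|\lambda|^{-2}.
\end{equation}
For $|\lambda|\ge\lambda_0>0$, both terms are $\le C\langle\lambda\rangle^\alpha$; this is where the hypothesis $\alpha\ge -1$ enters, ensuring that $|\lambda|^{-2}\le C_{\lambda_0}|\lambda|^{-1}\le C\langle\lambda\rangle^{-1}\le C\langle\lambda\rangle^\alpha$ on the truncated range. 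The whole argument is essentially bookkeeping once these three equivalences are assembled, and I do not anticipate a substantive obstacle; the only point requiring care is the treatment of the $|\lambda|^{-1}$ and $|\lambda|^{-2}$ error terms at the endpoint $\alpha=-1$, handled by the standing restriction $|\lambda|\ge\lambda_0$.
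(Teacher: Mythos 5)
Your proof is correct and takes essentially the same route as the paper's: both directions chain Proposition~\ref{maplemma}(3), Lemma~\ref{2t8}, and Lemma~\ref{2t11}, exploiting that on the real line $-\bar\lambda=-\lambda$ lies in the same range so the $L^2\to H^1$ bound applies uniformly, and then absorbing the $|\lambda|^{-1}$ and $|\lambda|^{-2}$ remainders on $|\lambda|\ge\lambda_0$ using $\alpha\ge -1$.
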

\begin{proof}
1. Assume \eqref{2l11} holds. Lemma \ref{2t11} implies $\|(\Ac+i\lambda)^{-1}\|_{\mathcal{L}(\Hc)}\le C\langle \lambda\rangle^{\alpha+1}$ and Lemma \ref{2t8}(1) implies \eqref{2l10}. 

2. Assume \eqref{2l10} holds. Proposition \ref{maplemma}(3) implies
\begin{equation}
\|P_\lambda^{-1}\|_{L^2\rightarrow H^1}\le C\langle \lambda\rangle^{\alpha+1}+C\le C\langle \lambda\rangle^{\alpha+1},
\end{equation}
uniformly for all $\lambda\in \{\lambda\in \mathbb{R}:\abs{\lambda}\ge \lambda_0\}$. For $\lambda\in \{\lambda\in \mathbb{R}:\abs{\lambda}\ge \lambda_0\}$, $-\bar\lambda=-\lambda$ is also in this set. Lemma \ref{2t8} implies
\begin{equation}
\|(\Ac+i\lambda)^{-1}\|_{\mathcal{L}(\Hc)}\le \abs{\lambda}\langle \lambda\rangle^{\alpha}+(2+C\abs{\lambda}^{-1}\langle \lambda\rangle)(C\langle \lambda\rangle^{\alpha+1}+C)+C\abs{\lambda}^{-1}\le C\langle \lambda\rangle^{\alpha+1}. 
\end{equation}
Invoke Lemma \ref{2t11} to see \eqref{2l11}. 
\end{proof}
We now give full proof to Proposition \ref{polyprop}, that resolvent estimates of $P_\lambda$ are equivalent to energy decay. 
\begin{proof}[Proof of Proposition \ref{polyprop}]
1. We assume $\Pli$, for $|\lambda| \geq \lambda_0$ 
\begin{equation}
\|\Pli\|_{L^2 \ra L^2} \leq C \abs{\lambda}^{1/\alpha-1},
\end{equation}
by Lemma \ref{2t1} this is equivalent to 
\begin{equation}
\|(\Acd+i\lambda)^{-1}\|_{\mathcal{L}(\Hcd)} \leq C \langle \lambda\rangle^{1/\alpha},
\end{equation}
for $|\lambda| \geq \lambda_0$. Note that Corollary \ref{2t10}(1) with Lemma \ref{2t11} implies $i\mathbb{R}\cap\spec{}\Acd=\emptyset$. By Proposition \ref{BTthm} of Borichev-Tomilov, this is equivlent to 
\begin{equation}
\|e^{t \Acd} \Acd^{-1}\|_{\Lc(\Hcd)} = O(\langle t\rangle^{-\alpha}). 
\end{equation}
This is equivalent to that, the energy of solution $u$ to the damped wave equation \eqref{DWE} is bounded by
\begin{multline}
E(u, t)^{\frac12}\le C\left\|e^{t\Ac}(u_0, u_1)\right\|_{\Hcd}=C\left\|e^{t\Acd}\Pi_\bullet(u_0, u_1)\right\|_{\Hcd}
\le \left\|e^{t\Acd}\Acd^{-1}\right\|_{\mathcal{L}(\Hcd)}\left\|(u_0, u_1)\right\|_{\Dcd}\\
\le C\langle t\rangle^{-\alpha}\left(\|\nabla u_0\|_{L^2}+\|u_1\|_{H^1}+\|-\Delta u_0+Wu_1\|_{L^2}\right),
\end{multline}
as desired. 

2. When $\|P_\lambda^{-1}\|_{L^2\rightarrow L^2}\le C$, we apply Proposition \ref{GPHthm} of Gearhart-Pr\"uss-Huang.
\end{proof}

\subsection{Schr\"odinger observability gives polynomial decay}
Let $e^{-it\Delta}$ be the unitary Schr\"odinger operator group on $L^2$ generated by the anti-self-adjoint operator $-i\Delta: H^2\rightarrow L^2$. The Schr\"odinger equation
\begin{equation}
	(i\partial_t-\Delta)u=0, \ u|_{t=0}=u_0\in L^2,
\end{equation} 
is unique solved by $u=e^{-it\Delta}u_0$. 
\begin{definition}[Schr\"odinger observability]\label{observability}
We say that the Schr\"odinger equation is exactly observable from an open set $\Omega\subset M$ if there exists $T>0, C_T>0$ such that for any $u_0 \in L^2$, 
\begin{equation}
\|u_0\|_{L^2}\le C_T\int_{0}^{T} \|\mathbbm{1}_{\Omega}e^{-it\Delta} u_0\|_{L^2}\ dt.
\end{equation}
\end{definition}
We now consider the spectral theory of $-\Delta$. Since $-\Delta$ is essentially self-adjoint and positive-definite on $L^2$, we have a spectral resolution
\begin{equation}
-\Delta u=\int_0^\infty\rho^2\ dE_\rho(u),
\end{equation}
where $E_\rho$ is a projection-valued measure on $L^2$ and $\supp E_\rho\subset [0, \infty)$. Define the scaling operators 
\begin{equation}
\Lambda^{-s}=\int_0^\infty (1+\rho^2)^{-s}\ dE_\rho(u). 
\end{equation}
Those operators $\Lambda^{-s}: H^{-s}\rightarrow L^2$ are elliptic and bounded from above and below, and they commute with $-\Delta$. 
\begin{lemma}\label{2t15}
Fix $s, N>0$. Assume the Schr\"odinger equation is exactly observable from $\Omega$. Let $\chi\in C^\infty$ where $\chi= 1$ on a open neighbourhood of $\overline\Omega$. Then there is $C>0$ such that 
\begin{equation}
\|u\|_{H^{-s}}\le C\|(-\Delta-\lambda^2)u\|_{H^{-s}}+C\|\chi u\|_{L^2}^2+C\|u\|_{H^{-N}}^2.
\end{equation}
for all $\lambda\in\mathbb{R}$.
\end{lemma}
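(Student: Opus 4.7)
The plan is to upgrade the Schr\"odinger observability from $L^2$ to $H^{-s}$ by composing with the elliptic isomorphism $\Lambda^{-s}$, and then to use the approximate eigenvalue equation $(-\Delta-\lambda^2)u\approx 0$ to replace the evolved datum $e^{-it\Delta}u$ by $u$ itself up to a Duhamel error.

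Because $\Lambda^{-s}$ is defined from the spectral resolution of $-\Delta$, it commutes with $-\Delta$, so $e^{-it\Delta}\Lambda^{-s}=\Lambda^{-s}e^{-it\Delta}$ and $e^{-it\Delta}$ is an isometry on $H^{-s}$. Applying Definition \ref{observability} to $\Lambda^{-s}u\in L^2$ and squaring via Cauchy-Schwarz would give
\begin{equation}
\|u\|_{H^{-s}}^2\le CT\int_0^T\bigl\|\mathbbm{1}_\Omega\,\Lambda^{-s}e^{-it\Delta}u\bigr\|_{L^2}^2\,dt,
\end{equation}
with $C, T$ independent of $\lambda$. Next I would decompose $e^{-it\Delta}u=e^{it\lambda^2}u+v(t)$, where $v(t):=(e^{-it\Delta}-e^{it\lambda^2})u$ vanishes at $t=0$ and solves $(i\partial_t-\Delta)v=-e^{it\lambda^2}(-\Delta-\lambda^2)u$. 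Duhamel's formula combined with the $H^{-s}$-isometry of $e^{-it\Delta}$ yields $\|v(t)\|_{H^{-s}}\le t\|(-\Delta-\lambda^2)u\|_{H^{-s}}$, so the contribution of $v$ to the integral above is controlled by a constant times $\|(-\Delta-\lambda^2)u\|_{H^{-s}}^2$. The remaining main term is $T\|\mathbbm{1}_\Omega\Lambda^{-s}u\|_{L^2}^2$, since the scalar phase $e^{it\lambda^2}$ has modulus one; this is the crucial step where the $\lambda$-dependence drops out.

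To handle this main term I would choose $\chi_1\in C_c^\infty(M)$ with $\mathbbm{1}_\Omega\le\chi_1\le 1$ and $\supp\chi_1\subset\{\chi=1\}$, which is possible because $\chi$ equals $1$ on an open neighbourhood of $\overline\Omega$. Then
\begin{equation}
\|\mathbbm{1}_\Omega\,\Lambda^{-s}u\|_{L^2}\le \|\chi_1\,\Lambda^{-s}(\chi u)\|_{L^2}+\|\chi_1\,\Lambda^{-s}((1-\chi)u)\|_{L^2}.
\end{equation}
The first summand is bounded by $C\|\chi u\|_{L^2}$ since $\Lambda^{-s}$ is bounded on $L^2$ for $s>0$. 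For the second, $\supp\chi_1$ and $\supp(1-\chi)$ are compactly separated, so the Schwartz kernel of $\chi_1\Lambda^{-s}(1-\chi)$ is supported off the diagonal where the kernel of $\Lambda^{-s}$ is smooth; hence this operator is smoothing and bounded $H^{-N}\to L^2$ for any $N>0$, producing the $\|u\|_{H^{-N}}^2$ contribution. Assembling the three pieces then yields the claimed bound, uniformly in $\lambda$.

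The main obstacle I anticipate is confirming uniformity in $\lambda$ of the Duhamel estimate; what makes it work is that the source in $v$'s equation factors as the unit-modulus phase $e^{it\lambda^2}$ times the $\lambda$-independent quantity $(-\Delta-\lambda^2)u$, so after taking $H^{-s}$-norms all $\lambda$-dependence is confined to $\|(-\Delta-\lambda^2)u\|_{H^{-s}}$. The smoothing estimate on $\chi_1\Lambda^{-s}(1-\chi)$ is likewise $\lambda$-independent, since $\Lambda^{-s}$ depends only on $-\Delta$.
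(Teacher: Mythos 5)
Your proof is correct, and it arrives at the same inequality by a genuinely different implementation of the same high-level strategy (upgrade $L^2$-observability to $H^{-s}$ via $\Lambda^{-s}$, then localize the observation term). The paper's proof shortcuts your Duhamel step by citing \cite[Theorem 5.1]{mil05}, which already converts exact observability of the Schr\"odinger group into the resolvent-type bound $\|v\|_{L^2}\le C\|(-\Delta-\lambda^2)v\|_{L^2}+C\|\mathbbm{1}_\Omega v\|_{L^2}$; you effectively re-prove that lemma from the time-domain definition, decomposing $e^{-it\Delta}u=e^{it\lambda^2}u+v(t)$ with the source $-e^{it\lambda^2}(-\Delta-\lambda^2)u$ and noting the unit modulus phase kills the $\lambda$-dependence. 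That is the standard proof of the Miller/Burq--Zworski resolvent criterion, so nothing is lost, only self-containedness gained. For the localization step the paper writes $\|\mathbbm{1}_\Omega\Lambda^{-s}u\|\le\|\chi_0\Lambda^{-s}u\|$ with $\chi\chi_0=\chi_0$ and then invokes the microlocal elliptic estimate via $\WF(\chi_0\Lambda^{-s})\subset\mathrm{Ell}(\chi)$ (which actually yields the sharper $\|\chi u\|_{H^{-s}}$ on the right, though the statement only asks for $\|\chi u\|_{L^2}$), whereas you decompose $u=\chi u+(1-\chi)u$ and treat $\chi_1\Lambda^{-s}(1-\chi)$ by support-separation pseudolocality. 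These are two phrasings of the same pseudodifferential smoothing fact; yours is slightly more elementary and gives exactly $\|\chi u\|_{L^2}$, matching the lemma as stated. One small precision to add: you should choose $\chi_1$ with $\supp\chi_1$ compactly contained in the open set where $\chi\equiv 1$, so that $\supp\chi_1$ and $\supp(1-\chi)$ are positively separated closed sets, which is what makes $\chi_1\Lambda^{-s}(1-\chi)$ genuinely smoothing.
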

\begin{proof}
When the Schr\"odinger equation is exactly observable from $\Omega$, \cite[Theorem 5.1]{mil05} implies for any $v\in L^2$
\begin{equation}\label{2l28}
\|v\|_{L^2}\le C\|(-\Delta-\lambda^2)v\|_{L^2}+C\|\mathbbm{1}_\Omega v\|_{L^2}.
\end{equation}
Now let $u\in H^{-s}$ and $v=\Lambda^{-s}u\in L^2$. Apply \eqref{2l28} to see
\begin{equation}
\|\Lambda^{-s}u\|_{L^2}\le C\|(-\Delta-\lambda^2)\Lambda^{-s}u\|_{L^2}+C\|\mathbbm{1}_\Omega \Lambda^{-s} u\|_{L^2}=C\|\Lambda^{-s}(-\Delta-\lambda^2)u\|_{L^2}+C\|\mathbbm{1}_\Omega \Lambda^{-s}u\|_{L^2},
\end{equation}
which implies
\begin{equation}
\|u\|_{H^{-s}}\le C\|(-\Delta-\lambda^2)u\|_{H^{-s}}+C\|\mathbbm{1}_\Omega\Lambda^{-s} u\|_{L^2}.
\end{equation}
Now fix a cutoff $\chi_0$ such that $\chi_0\equiv 1$ on $\Omega$, $\supp{\chi_0}\subset \supp \chi$ compactly, and $\chi\chi_0=\chi_0$. Note that since $\WF{(\chi_0\Lambda^{-s})} \subset \Ell{\chi}$, we have
\begin{equation}
\|\mathbbm{1}_\Omega\Lambda^{-s} u\|_{L^2}^2\le \|\chi_0\Lambda^{-s} u\|_{L^2}^2\le C \|\chi u\|_{H^{-s}}^2+C\|u\|_{H^{-N}}^2,
\end{equation}
for any $N>0$ from the elliptic estimate \cite[Theorem E.33]{dz19}. 
\end{proof}

\begin{proposition}\label{2t19}
Let $W$ be normally $L^p$ for $p\in (1,\infty)$. Assume the Schr\"odinger equation is exactly observable from $\Omega$, and there exists $\epsilon>0$ such that $W\ge \epsilon$ almost everywhere on an open neighbourhood of $\overline{\Omega}$. Then there exists $\lambda_0, C>0$ such that for all $|\lambda| \geq \lambda_0$
\begin{equation}
\|P_\lambda^{-1}\|_{L^2\rightarrow L^2}\le C\langle \lambda\rangle^{1+\frac{1}{p}}.
\end{equation}
When $p=1$, $\|P_\lambda^{-1}\|_{L^2\rightarrow L^2}\le C\langle \lambda\rangle^{2+}$. When $p=\infty$, $\|P_\lambda^{-1}\|_{L^2\rightarrow L^2}\le C\langle \lambda\rangle$ uniformly for all $\abs{\lambda}\ge \lambda_0$.
\end{proposition}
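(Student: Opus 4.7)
The plan is to combine the $H^{-s}$-resolvent bound from Lemma \ref{2t15} with the algebraic identity $(-\Delta-\lambda^2)u = P_\lambda u + i\lambda Wu$, choosing the Sobolev loss $s = 1/(2p)$ precisely to match the Sobolev multiplier Lemma \ref{2t3}. Since $\sqrt{W}:L^2\to H^{-1/(2p)}$ is bounded, the factorization $Wu = \sqrt W(\sqrt W u)$ yields the crucial estimate
\begin{equation}
\|Wu\|_{H^{-1/(2p)}} \le C\|\sqrt W u\|_{L^2},
\end{equation}
which is what allows the singular term $i\lambda Wu$ to be treated as a perturbation of $-\Delta-\lambda^2$ at the cost of just $1/(2p)$ Sobolev derivatives.

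Concretely, fix $\chi\in C^\infty(M)$ equal to $1$ on a neighbourhood of $\overline\Omega$ with $\supp \chi\subset\{W\ge\epsilon\}$, and set $s=1/(2p)$. For real $\lambda$ with $|\lambda|\ge 1$ and $f=P_\lambda u$, pair $P_\lambda u$ with $u$ in $L^2$; separating the imaginary and real parts gives, after Young's inequality,
\begin{equation}
\|\sqrt W u\|_{L^2}^2 \le |\lambda|^{-1}\|f\|_{L^2}\|u\|_{L^2},\qquad \|u\|_{H^1}\le C|\lambda|\|u\|_{L^2}+C|\lambda|^{-1}\|f\|_{L^2}.
\end{equation}
Applying Lemma \ref{2t15} to the identity $(-\Delta-\lambda^2)u = f+i\lambda Wu$, using the multiplier bound above and $\|\chi u\|_{L^2}^2 \le \epsilon^{-1}\|\sqrt W u\|_{L^2}^2$ from the pointwise lower bound on $W$, produces
\begin{equation}
\|u\|_{H^{-s}} \le C\|f\|_{L^2}+C|\lambda|\|\sqrt W u\|_{L^2}+C\|u\|_{H^{-N}}
\end{equation}
for $N$ arbitrarily large.

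To convert this $H^{-s}$-bound into an $L^2$-bound, I use the duality-interpolation inequality $\|u\|_{L^2}^{1+s}\le \|u\|_{H^{-s}}\|u\|_{H^1}^s$, obtained by combining $\|u\|_{L^2}^2\le \|u\|_{H^{-s}}\|u\|_{H^s}$ with the standard interpolation of $H^s$ between $L^2$ and $H^1$. Substituting the $H^{-s}$- and $H^1$-bounds above, together with the square-rooted imaginary-part estimate $\|\sqrt W u\|_{L^2}\le |\lambda|^{-1/2}\|f\|_{L^2}^{1/2}\|u\|_{L^2}^{1/2}$, the dominant term on the right becomes $C|\lambda|^{1/2+s}\|f\|_{L^2}^{1/2}\|u\|_{L^2}^{s+1/2}$; a final Young's inequality absorbs the $\|u\|_{L^2}^{1/2}$ factor into the left side and gives
\begin{equation}
\|u\|_{L^2}\le C|\lambda|^{2s+1}\|f\|_{L^2} = C|\lambda|^{1+1/p}\|f\|_{L^2},
\end{equation}
which is the claimed bound for $p\in(1,\infty)$. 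The boundary case $p=1$ runs identically with $s=1/2+\delta$ for arbitrarily small $\delta>0$ (the loss reflecting the $H^{1/2+}$-range of the multiplier in Lemma \ref{2t3}), giving $|\lambda|^{2+2\delta}=\langle\lambda\rangle^{2+}$. For $p=\infty$, $W\in L^\infty$ so one uses Miller's estimate \eqref{2l28} directly in $L^2$ (i.e.\ $s=0$) together with $\|Wu\|_{L^2}^2\le \|W\|_\infty\|\sqrt W u\|_{L^2}^2$ in place of Lemma \ref{2t3}, recovering the classical linear rate $\langle\lambda\rangle$ of \cite{aln14}.

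The main obstacle I anticipate is the clean absorption of the lower-order $\|u\|_{H^{-N}}$-term, which contributes an uncontrolled $C|\lambda|^s\|u\|_{H^{-N}}$ in the final inequality and enjoys no a priori semiclassical smallness in these norms. This is precisely what forces the threshold $|\lambda|\ge \lambda_0$. I plan to handle it by a standard compactness-contradiction argument: if the conclusion fails, extract sequences $\lambda_n\to\infty$ and $u_n$ with $\|u_n\|_{L^2}=1$ and $|\lambda_n|^{1+1/p}\|P_{\lambda_n}u_n\|_{L^2}\to 0$; the a priori bounds then force $\sqrt W u_n\to 0$ and $\{u_n\}$ precompact in a weaker topology, whose limit is a nontrivial element of $\ker P_{\lambda_\infty}$ vanishing on $\Omega$, contradicting unique continuation for $-\Delta$ and the normalization.
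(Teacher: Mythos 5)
Your main chain of estimates — choosing $s=1/(2p)$, feeding the Sobolev multiplier bound $\sqrt W:L^2\to H^{-s}$ into the observability estimate of Lemma \ref{2t15}, and then upgrading from $H^{-s}$ to $L^2$ by interpolation against the $H^1$-bound — is essentially the argument the paper gives, and your arithmetic delivers the correct rate $\langle\lambda\rangle^{1+1/p}$. The only notable cosmetic difference is the interpolation step: you use $\|u\|_{L^2}^{1+s}\le\|u\|_{H^{-s}}\|u\|_{H^1}^s$ directly, whereas the paper pairs in $H^{-s}$ to get an $H^{1-s}$ bound and then applies the weighted interpolation \eqref{2l31} with $\gamma=\lambda^s$; these are equivalent arrangements of the same Young/interpolation bookkeeping.

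However, the proposed treatment of the lower-order $\|u\|_{H^{-N}}$ term is a genuine gap, and the paper's proof devotes a nontrivial step to it. You correctly observe that this term appears in your final inequality with a \emph{growing} prefactor $C|\lambda|^s$, but the compactness-contradiction you sketch cannot dispose of it. If $\lambda_n\to\infty$, $\|u_n\|_{L^2}=1$, and $\langle\lambda_n\rangle^{1+1/p}\|P_{\lambda_n}u_n\|_{L^2}\to 0$, then from the pairing identities one has $\|\nabla u_n\|\sim\lambda_n$ and $\sqrt W u_n\to 0$; the sequence $u_n$ is therefore high-frequency oscillatory and its weak $L^2$-limit is $0$, not a nontrivial eigenfunction of some $P_{\lambda_\infty}$ (there is no finite $\lambda_\infty$), so the invocation of unique continuation is the wrong mechanism. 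Rellich then gives $\|u_n\|_{H^{-N}}\to 0$, but only qualitatively: you need $|\lambda_n|^s\|u_n\|_{H^{-N}}\to 0$, i.e.\ a quantitative rate, to beat the $|\lambda|^s$ prefactor, and no such rate follows from soft compactness. What the paper actually does is pair $P_\lambda u$ with $u$ in $H^{-N}$ (using that $\Lambda^{-N}\sqrt W$ is bounded on $L^2$ for $N\ge s$) to produce the semiclassical ellipticity gain
\begin{equation}
\|u\|_{H^{-N}}^2\lesssim\lambda^{-2}\|u\|_{H^{1-N}}^2+|\lambda|^{-1}\|\sqrt W u\|_{L^2}^2+\lambda^{-2}\|f\|_{H^{-N}}^2,
\end{equation}
which for $N>s+1$ makes the $H^{-N}$ term absorbable into the $H^{-s}$ estimate with room to spare. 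You should replace your compactness sketch with this (or an equivalent low-frequency projection argument, projecting $(-\Delta-\lambda^2)u=f+i\lambda Wu$ onto frequencies $\le\lambda/2$ where $-\Delta-\lambda^2$ is elliptic with $O(\lambda^{-2})$ inverse); without it, the threshold $|\lambda|\ge\lambda_0$ cannot be justified.
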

\begin{proof}
1. Let $p\in (1,\infty)$ and $s=\frac{1}{2p}$. Let $P_{\lambda}u=(-\Delta-i\lambda W-\lambda^2)u=f$. There exists $\chi\in C^\infty$ such that $\chi=1$ on $\overline\Omega$, while $\supp \chi$ is compactly supported in  $\{W\ge \epsilon\}$. Since on $\supp\chi$, $W$ is bounded from below, we have $\|\chi u\|_{L^2}\le \|\chi\sqrt{W}u\|_{L^2}$. Lemma \ref{2t15} then implies
\begin{equation}
\|u\|_{H^{-s}}^2\le C\|(-\Delta-i\lambda W-\lambda^2)u\|_{H^{-s}}^2+\lambda^2\|Wu\|_{H^{-s}}^2+C\|\chi\sqrt{W} u\|_{L^2}^2+C \|u\|_{H^{-N}}^2.
\end{equation}
Since $\sqrt{W}:L^2\rightarrow H^{-s}$ is bounded, we have
\begin{equation}\label{2l30}
\|u\|_{H^{-s}}^2\le C\|f\|_{H^{-s}}^2+C\lambda^2\|\sqrt{W} u\|_{L^2}^2+C \|u\|_{H^{-N}}^2.
\end{equation}
We now get rid of the last term on the right. Pair $P_\lambda u$ with $u$ in $H^{-N}$ to observe
\begin{equation}
\lambda^{2}\|u\|_{H^{-N}}^2=\|\nabla u\|_{H^{-N}}^2-i\lambda\langle Wu, u\rangle_{H^{-N}} -\langle f, u\rangle_{H^{-N}}.
\end{equation}
This implies
\begin{equation}\label{2l29}
\|u\|_{H^{-N}}^2\le C\lambda^{-2}\|u\|_{H^{1-N}}^2+C\abs{\lambda}^{-1}\abs{\langle Wu, u\rangle_{H^{-N}}}+C\lambda^{-2}\|f\|_{H^{-N}}^2. 
\end{equation}
Note that $\langle Wu, u\rangle_{H^{-N}}=\langle \Lambda^{-N} \sqrt{W}\sqrt{W} u , \Lambda^{-N} u \rangle$. When $N\ge s$, $\Lambda^{-N}\sqrt{W}$ is a bounded map on $L^2$. Thus we have
\begin{equation}
\abs{\langle Wu, u\rangle_{H^{-N}}}\le C\epsilon^{-1}\|\sqrt{W}u\|_{L^2}^2+\epsilon \|u\|_{H^{-N}}^2,
\end{equation}
for any $N\ge s$. Now fix $N>s+1$, \eqref{2l29} implies
\begin{equation}
\|u\|_{H^{-N}}^2\le C\lambda^{-2}\|u\|_{H^{-s}}+C\abs{\lambda}^{-1}\|\sqrt{W}u\|_{L^2}^2+C\lambda^{-2}\|f\|_{H^{-s-1}}.
\end{equation}
Applying this to \eqref{2l30} implies that for large $\abs{\lambda}\ge \lambda_0$, 
\begin{equation}
\|u\|_{H^{-s}}^2\le C\left(\|f\|_{H^{-s}}^2+\lambda^2\|\sqrt{W} u\|_{L^2}^2\right).
\end{equation}
Now pair $P_\lambda u$ with $u$ in $H^{-s}$ to observe
\begin{equation}
\|\nabla u\|_{H^{-s}}^2-\lambda^2\|u\|_{H^{-s}}^2-i\lambda\langle Wu, u\rangle_{H^{-s}}=\langle u,f\rangle_{H^{-s}}.
\end{equation}
Note $\abs{\langle Wu, u\rangle_{H^{-s}}}\le C\epsilon^{-1}\|\sqrt{W}u\|_{L^2}^2+\epsilon \|u\|_{H^{-s}}^2$ and thus
\begin{equation}
\|u\|_{H^{1-s}}^2\le C\langle \lambda\rangle^2\|u\|_{H^{-s}}^2+C\langle \lambda\rangle^{-2}\|f\|_{H^{-s}}^2+\abs{\lambda}\abs{\langle Wu, u\rangle_{H^{-s}}}\le C\langle \lambda\rangle^2\left(\|f\|_{H^{-s}}^2+\lambda^2\|\sqrt{W} u\|_{L^2}^2\right).
\end{equation}
Apply the interpolation inequality \eqref{2l31} to $\Lambda^{-s}u$ with $\gamma=\lambda^s$ and $r=s$ to see
\begin{equation}
\|u\|_{L^2}^2\le C\langle \lambda\rangle^{2s}\left(\|f\|_{H^{-s}}^2+\lambda^2\|\sqrt{W} u\|_{L^2}^2\right).
\end{equation}
Pair $P_\lambda u$ with $u$ in $L^2$ to observe $\|\nabla u\|^2-\lambda^2\|u\|^2-i\lambda \|\sqrt{W}u\|^2=\langle f,u\rangle$ and
\begin{equation}
\abs{\lambda} \|\sqrt{W}u\|^2\le C\epsilon^{-1}\langle \lambda\rangle^{2s+1} \|f\|^2+\epsilon\langle \lambda\rangle^{-2s-1}\|u\|^2.
\end{equation}
Then $\|u\|^2\le C\langle \lambda\rangle^{2+4s} \|f\|^2$ and $\|P_\lambda^{-1}\|_{L^2\rightarrow L^2}\le C\langle \lambda\rangle^{1+2s}$. 

2. When $p=1$, for any $s>\frac{1}{2}$, the proof above still works. When $p=\infty$, $\sqrt{W}$ is bounded on $L^2$, and the above proof works with $s=0$.
\end{proof}
\begin{proof}[Proof of Theorem \ref{controlthm}]
Proposition \ref{polyprop} gives $E^{\frac{1}{2}}\le C\langle t\rangle^{-1/(2+\frac{1}{p})}$ for $p\in(1,\infty)$, and the corresponding rates for $p=1$ or $p=\infty$.
\end{proof}
\begin{remark}\label{2t16}
Assume $p\in (1,\infty)$. By a argument similar to the proof of Proposition \ref{2t19}, observe that $\|(-\Delta+(1+i\lambda)^2)^{-1}\|_{H^{-s}\rightarrow H^s}\le C\langle \lambda\rangle^{2s-1}$. For $s=\frac{1}{2p}$, we have $\|\sqrt{W}(-\Delta+(1+i\lambda)^2)^{-1}\sqrt{W}\|_{\mathcal{L}(L^2)}\le C\langle \lambda\rangle^{2s-1}$.  Apply \cite[Proposition 3.10]{cpsst19} to obtain $\|(\Acd+i\lambda)^{-1}\|_{\Hcd}\le C\langle \lambda\rangle^{2+4s}$ which gives $E^{\frac12}\le C\langle t\rangle^{-1/(2+\frac2p)}$, which is slower than our rates given in Theorem \ref{controlthm}. 
\end{remark}

\section{Resolvent estimates in one dimension}\label{resolventsection}
Consider the equation
\begin{equation}\label{sdwe}
\Pl u=(-\Delta-i\lambda W-\lambda^2)u=f.
\end{equation}
To show $\|\Pli \|_{L^2 \ra L^2} \leq \frac{C}{g(\lambda)},$ so that Propositions \ref{polyprop} can be applied, it is enough to show that there exist $C, \lambda_0 \geq 0$ such that for any $f \in L^2(M)$ and any $\abs{\lambda} \geq \lambda_0$, if $u \in H^2(M)$ solves \eqref{sdwe}, then
\begin{equation}\label{eq:resolventgoal}
\ltwo{u}^2 \leq C g(\lambda)^2 \ltwo{f}^2.
\end{equation}
To show Theorem \ref{torusthm} we must show \eqref{eq:resolventgoal} holds with $g(\lambda)^2=|\lambda|^{\frac{2}{\beta+2}}$. To show Theorem \ref{circlethm} we must show \eqref{eq:resolventgoal} holds with $g(\lambda)^2=\frac{1}{|\lambda|^2}$.

The main estimate for this section is the following one-dimensional resolvent estimate:
\begin{proposition}[1D Resolvent Estimate]\label{3t1}
Consider $u \in H^2(\Sb^1)$ that satisfies
\begin{equation}\label{SDWE}
\left(-\partial_x^2-i \lambda W-\mu^2\right)u(x)=f(x),
\end{equation}
then there is $C, \lambda_0>0$ such that for $\mu^2 \le \lambda^2$ and $\abs{\lambda}\ge \lambda_0$ we have
\begin{equation}
\left\|u\right\|_{L^2}^2+\langle \mu^2 \rangle^{-1}\|\partial_x u\|_{L^2}^2 \le C\langle \mu^2 \rangle^{-1}\left(1+\langle \mu^2 \rangle^{-1} |\lambda|^{\frac{2}{2+\beta}}\right)\left\|f\right\|_{L^2}^2.
\end{equation}
\end{proposition}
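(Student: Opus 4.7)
The plan is to use the Morawetz multiplier method. Start with the two basic pairings: pairing $(-\partial_x^2 - i\lambda W - \mu^2)u = f$ against $\bar u$ in $L^2(\Sb^1)$ and extracting real and imaginary parts yields
\begin{equation*}
\|u'\|_{L^2}^2 - \mu^2\|u\|_{L^2}^2 = \Re\int f\bar u\,dx, \qquad |\lambda|\int W|u|^2\,dx \le \|f\|_{L^2}\|u\|_{L^2}.
\end{equation*}
Since $W \gtrsim (|x|-\sigma)^\beta$ on $\{\sigma<|x|<\pi\}$ and $W \ge (\pi-\sigma)^\beta$ is bounded below on the exterior of any fixed neighbourhood of $\{|x|=\sigma\}$, the imaginary identity directly controls $\|u\|_{L^2(\{W>0\})}^2 \lesssim |\lambda|^{-1}\|f\|\|u\|$. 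The outstanding task is to bound $\|u\|_{L^2(\{|x|<\sigma\})}$, the norm on the undamped interval.

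\textbf{Morawetz multiplier.} Pair the equation with $\varphi\bar u'$, where $\varphi\in C^\infty(\Sb^1)$ is periodic with $\varphi' = \chi - \bar\chi$; here $\chi$ is a smooth cutoff identically $1$ on a slight shrinking of $\{W=0\}$ and compactly supported in $\{W=0\}$, and $\bar\chi=(2\pi)^{-1}\int \chi\,dx$ is its mean (ensuring periodicity of $\varphi$). Taking $2\Re$ and integrating by parts (all boundary terms vanish on $\Sb^1$) produces the Morawetz identity
\begin{equation*}
\int\chi(|u'|^2+\mu^2|u|^2)\,dx = \bar\chi\!\int\!(|u'|^2+\mu^2|u|^2)\,dx - 2\lambda\!\int W\varphi\,\Im(u\bar u')\,dx + 2\Re\!\int f\varphi\bar u'\,dx.
\end{equation*}
The first term on the right is controlled via the real identity combined with the damping-region bound, and the source term by Cauchy-Schwarz together with the basic pairings. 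Provided the cross term is handled, the left-hand side bounds $\langle\mu^2\rangle\|u\|_{L^2(\{|x|<\sigma\})}^2 + \|u'\|_{L^2(\{|x|<\sigma\})}^2$, which combined with the damping-region control yields the desired global estimate.

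\textbf{Cross term and main obstacle.} The heart of the argument is estimating $\lambda\int W\varphi\,\Im(u\bar u')$, which is delicate because $W$ blows up at $|x|=\sigma$ when $\beta<0$. I would normalize $\varphi$ to vanish at $|x|=\pm\sigma$, which is possible because $\int\varphi'\,dx = 0$, so that $|\varphi(x)|\lesssim |\,|x|-\sigma\,|$ near the singularities and $W\varphi$ is bounded by $(|x|-\sigma)^{\beta+1}$, integrable precisely because $\beta>-1$. Split the integral at the boundary layer $\{\sigma<|x|<\sigma+\epsilon\}$ versus its complement, with $\epsilon$ a parameter to be optimized in $\lambda$. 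On the boundary layer, a weighted Cauchy-Schwarz exploiting $\int W|u|^2\le|\lambda|^{-1}\|f\|\|u\|$ and the vanishing of $\varphi$ gives a contribution scaling like $\epsilon^{(\beta+2)/2}$; on the complement, $W\le\epsilon^\beta$ gives a contribution $\lesssim |\lambda|\epsilon^\beta\|u\|\|u'\|$, controlled via $\|u'\|^2 \le \mu^2\|u\|^2 + \|f\|\|u\|$. Balancing the two regimes at $\epsilon\sim |\lambda|^{-1/(\beta+2)}$ produces the sharp factor $|\lambda|^{2/(\beta+2)}$. Absorbing small terms into the left-hand side and solving for $\|u\|^2$ gives the stated inequality. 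The main obstacle is precisely this optimization: a naive Cauchy-Schwarz loses a full factor of $|\lambda|$, and the $\beta$-dependent exponent $2/(\beta+2)$ emerges only after balancing the integrability scale $\int_0^\epsilon t^\beta\,dt\sim\epsilon^{\beta+1}$ against the far-field bound $\epsilon^\beta$. A secondary technical point is maintaining the $\langle\mu^2\rangle$-scaling uniformly across $\mu^2\le 0$ and $\mu^2\ge 0$, so the estimate can be summed over $y$-Fourier modes on $\Tb^2$ to yield Theorem \ref{torusthm}.
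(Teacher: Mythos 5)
Your outline shares the general shape of the paper's argument (a Morawetz-type multiplier paired against $\bar u'$, plus a split of the cross term at a boundary layer of width $\sim\lambda^{-1/(\beta+2)}$, aiming at the factor $\lambda^{2/(\beta+2)}$), but the multiplier you choose and the way you close the cross-term estimate have genuine gaps that would prevent the argument from reaching the stated exponent.

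First, the multiplier design. You take $\varphi$ with $\varphi'=\chi-\bar\chi$ where $\chi$ is a cutoff to $\{W=0\}$, normalised so that $\varphi$ vanishes at $|x|=\sigma$. This makes $W\varphi\sim(|x|-\sigma)^{\beta+1}$ integrable near the singularity, which is nice, but it does \emph{not} amplify the left-hand side near the singularity. The paper instead takes a piecewise linear $b$ with $b'=\phi$, where $\phi=\lambda^{\delta_j}$ on $\{|\,|x|-\sigma_j|\le\lambda^{-\delta_j}\}$ (and $\phi=-M$ on the interior of $\{W>c\}$, which is what produces the $\int_{W>c}|u'|^2$ control). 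The amplification $\phi=\lambda^{\delta}$ is the key: after Cauchy--Schwarz on the boundary layer the $|u'|^2$ factor is measured against $\int\phi|u'|^2$, buying an extra $\lambda^{-\delta/2}$ relative to your set-up, and this is exactly what eliminates the residual $\lambda$-power when $\delta=1/(\beta+2)$. Without it, your boundary-layer term comes out as $\lambda\,\epsilon^{(\beta+2)/2}\|f\|\|u\|$ with nothing on the left to absorb the leftover $|u'|^2$ contribution.

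Second, and more importantly, the far-field piece of the cross term does not close the way you describe. Since $\varphi$ is normalised to vanish at $\sigma$ and then grow to $O(1)$, the product $W\varphi\sim(|x|-\sigma)^{\beta+1}$ is \emph{increasing} in $|x|-\sigma$, so its maximum on $\{|x|>\sigma+\epsilon\}$ is $O(1)$, not $\epsilon^{\beta}$ or $\epsilon^{\beta+1}$. A direct Cauchy--Schwarz then gives
\begin{equation}
\lambda\int_{|x|>\sigma+\epsilon}W\varphi\,|uu'|\lesssim\lambda\,\|u\|_{L^2(W>c)}\,\|u'\|_{L^2(W>c)}\lesssim\lambda\,\|f\|\|u\|,
\end{equation}
with no $\epsilon$-gain at all, which is strictly worse than the target $\lambda^{1/(\beta+2)}\|f\|\|u\|$. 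Your alternative suggestion of replacing $\|u'\|^2$ by $\mu^2\|u\|^2+\|f\|\|u\|$ is worse still: it reintroduces $\lambda\epsilon^\beta\mu\|u\|^2$, and for small $\mu^2$ (which must be covered uniformly for the Fourier-mode summation on $\Tb^2$) this cannot be absorbed into $\mu^2\int\chi|u|^2$ on the left for any admissible $\epsilon$. The paper avoids this entirely by a further integration by parts in the far field: $\int V_j\chi_j|u'|^2$ is rewritten via \eqref{eq:vjxbig} using the equation for $u''$, producing back an $\int V_j|uu'|$ term with a small prefactor that is then absorbed by Young's inequality together with the Case 2 estimate. You would also need the analogue of the paper's second IBP trick $|x|^{2\beta}=-C\,\partial_x|x|^{2\beta+1}$ to control $\int_{\text{bl}}V_j^2|u|^2$; a plain Cauchy--Schwarz there does not produce the correct exponent either.

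Finally, a minor point: the proposition also covers $\mu^2\le 0$, which you acknowledge but do not address; the paper treats $\mu^2<\mu_0^2$ separately via \eqref{eq:lowenergy}, with a modified multiplier (one needs $b''<0$ near the undamped interval). In summary: the balancing scale and the target exponent in your sketch are correct, but without the amplified multiplier and the two integration-by-parts identities the far-field and boundary-layer cross terms only yield $\lambda\|f\|\|u\|$, not $\lambda^{1/(\beta+2)}\|f\|\|u\|$, so the proof as sketched would give a strictly weaker resolvent estimate than claimed.
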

The proof of Proposition \ref{3t1} will be delayed to the second part of this section. Note that Proposition \ref{3t1} with $\mu^2=\lambda^2$ and Proposition \ref{polyprop} together imply Theorem \ref{circlethm}. Proposition \ref{3t1} can also be used to show the following proposition, which along with Proposition \ref{polyprop} implies Theorem \ref{torusthm}.

\begin{proposition}[Resolvent Estimate on Tori]\label{3t2}
Let $u\in H^2(\mathbb{T}^2)$ be the solution to 
\begin{equation}\label{3l1}
\Pl u(x,y)=\left(-\Delta -i\lambda W-\lambda^2\right)u(x,y)=f(x,y).
\end{equation}
Then there exists $C, \lambda_0>0$ such that for $\lambda\in\mathbb{R}$ with $\abs{\lambda}>\lambda_0$ we have
\begin{equation}
\left\|u\right\|_{L^2}^2+\lambda^{-2}\left\|\nabla u\right\|_{L^2}^2\le C\left(1+|\lambda|^{\frac{2}{2+\beta}}\right)\left\|f\right\|_{L^2}^2.
\end{equation}
\end{proposition}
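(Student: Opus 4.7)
The plan is to Fourier decompose in the $y$-variable, exploiting that the damping $W(x,y)=\sum_{j=1}^n W_j(x)$ in Theorem \ref{torusthm} depends only on $x$. Writing $u(x,y)=\sum_{k\in\mathbb{Z}} u_k(x)e^{iky}$ and $f(x,y)=\sum_{k\in\mathbb{Z}} f_k(x)e^{iky}$, equation \eqref{3l1} decouples into the family of one-dimensional equations
\begin{equation}
\left(-\partial_x^2 - i\lambda W(x) - (\lambda^2-k^2)\right)u_k = f_k, \qquad k\in\mathbb{Z},
\end{equation}
each of which is an instance of \eqref{SDWE} with $\mu^2=\lambda^2-k^2$. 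Since $k^2\ge 0$, the requirement $\mu^2\le\lambda^2$ is satisfied for every $k$, so Proposition \ref{3t1} will apply mode by mode.

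Applying Proposition \ref{3t1} yields, for each $k$,
\begin{equation}
\|u_k\|_{L^2}^2+\langle\lambda^2-k^2\rangle^{-1}\|\partial_x u_k\|_{L^2}^2\le C\langle\lambda^2-k^2\rangle^{-1}\left(1+\langle\lambda^2-k^2\rangle^{-1}|\lambda|^{\frac{2}{2+\beta}}\right)\|f_k\|_{L^2}^2.
\end{equation}
Using $\langle\lambda^2-k^2\rangle\ge 1$, this gives the mode-wise $L^2$ bound $\|u_k\|^2\le C(1+|\lambda|^{\frac{2}{2+\beta}})\|f_k\|^2$ and, after clearing the weight on the left, $\|\partial_x u_k\|^2\le C(1+|\lambda|^{\frac{2}{2+\beta}})\|f_k\|^2$. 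For the $y$-derivative, I would verify that $\lambda^{-2}k^2\langle\lambda^2-k^2\rangle^{-1}$ is uniformly bounded in $k\in\mathbb{Z}$ for $|\lambda|\ge 1$: for $|k|\le|\lambda|$ the ratio is at most $1$; for $k^2-\lambda^2\ge 1$ it is $k^2/(\lambda^2(k^2-\lambda^2))$, which is controlled; and for $k^2-\lambda^2\in(0,1)$ both $k^2$ and $\lambda^2\langle\lambda^2-k^2\rangle\approx \lambda^2$ are comparable. Hence $\lambda^{-2}k^2\|u_k\|^2\le C(1+|\lambda|^{\frac{2}{2+\beta}})\|f_k\|^2$, and summing over $k\in\mathbb{Z}$ via Parseval produces
\begin{equation}
\|u\|_{L^2}^2+\lambda^{-2}\|\nabla u\|_{L^2}^2\le C(1+|\lambda|^{\frac{2}{2+\beta}})\|f\|_{L^2}^2,
\end{equation}
as desired.

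The main technical point to watch is whether Proposition \ref{3t1} genuinely covers $\mu^2<0$, which corresponds to the high-frequency modes $|k|>|\lambda|$. Since the statement imposes only $\mu^2\le\lambda^2$ with no sign condition, if the proof treats $\mu^2$ as an arbitrary real parameter then the application above is immediate. Should the argument in Proposition \ref{3t1} accommodate only $\mu^2\ge 0$, the regime $|k|>|\lambda|$ would be handled separately by pairing the $1$D equation with $u_k$ and taking real parts, which gives $\|\partial_x u_k\|^2+(k^2-\lambda^2)\|u_k\|^2=\operatorname{Re}\langle f_k,u_k\rangle$ and hence $\|u_k\|\le\|f_k\|$ as soon as $k^2-\lambda^2\ge 1$, while the at most $O(1)$ borderline modes with $0<k^2-\lambda^2<1$ are handled by a perturbative argument around the $\mu^2=0$ case of Proposition \ref{3t1}. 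Either way, the Fourier decomposition reduces the two-dimensional problem cleanly to the one-dimensional resolvent bound already in hand.
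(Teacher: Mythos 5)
Your proposal is correct and follows essentially the same route as the paper: Fourier decompose in $y$, apply the one-dimensional Proposition \ref{3t1} mode by mode with $\mu^2=\lambda^2-k^2$ (and, since the statement of Proposition \ref{3t1} only requires $\mu^2\le\lambda^2$ with no sign restriction, the high modes $|k|>|\lambda|$ are already covered and the back-up argument you sketch is not needed), then sum via Parseval. The only small divergence is in the gradient estimate: you track $\|\partial_x u_k\|^2$ and $\lambda^{-2}k^2\|u_k\|^2$ mode by mode, whereas the paper recovers $\lambda^{-2}\|\nabla u\|^2$ more cheaply from the summed $L^2$ bound by pairing the full equation $P_\lambda u=f$ on $\mathbb{T}^2$ with $u$ and taking real parts, giving $\bigl|\|\nabla u\|^2-\lambda^2\|u\|^2\bigr|\le C\lambda^{-2}\|f\|^2+\tfrac12\lambda^2\|u\|^2$; both routes are valid and give the same constant.
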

\begin{proof}
Consider the eigenfunctions $e_n(y)$ with 
\begin{equation}
-\partial_y^2 e_n(y)=\lambda_n^2 e_n(y), \ \lambda_n^2\rightarrow \infty. 
\end{equation}
Decompose
\begin{equation}
u(x,y)=\sum u_n(x)e_n(y), \ f(x,y)=\sum f_n(x)e_n(y),
\end{equation}
and the equation \eqref{3l1} is reduced to
\begin{equation}
\left(-\partial_x^2 - i\lambda W - \mu^2 \right)u_n(x)=f_n(x), \ \mu^2=\left(\lambda^2-\lambda_n^2\right). 
\end{equation}
Apply Proposition \ref{3t1} to see uniformly in $n, \lambda$ that for $\abs{\lambda}>\lambda_0$ we have 
\begin{equation}
\left\|u_n\right\|_{L^2}^2+\langle \mu^2 \rangle^{-1}\|\nabla u_n\|_{L^2}^2 \le C\langle \mu^2 \rangle^{-1}\left(1+\langle \mu^2 \rangle^{-1} |\lambda|^{\frac{2}{2+\beta}}\right)\left\|f_n\right\|_{L^2}^2.
\end{equation}
In particular uniformly in $n, \lambda$
\begin{equation}
\|u_n\|^2\le C\left(1+|\lambda|^{\frac{2}{2+\beta}}\right)\|f_n\|^2.
\end{equation}
Apply the Parseval theorem to obtain
\begin{equation}\label{eq:ltwotorus}
\|u\|^2\le C\left(1+|\lambda|^{\frac{2}{2+\beta}}\right)\|f\|^2.
\end{equation}
Pair \eqref{3l1} with $u$ and take the real part to see
\begin{equation}
\abs{\|\nabla u\|^2-\lambda^2\|u\|^2}\le C\lambda^{-2}\|f\|^2+\frac{1}{2}\lambda^{2}\|u\|^2.
\end{equation}
This along with \eqref{eq:ltwotorus} produces the desired estimate. 
\end{proof}

The rest of this section is devoted to the proof of Proposition \ref{3t1}.
In particular, we will show that there exists $\lambda_0, \mu_0^2>0$ such that for any real $\lambda \geq \lambda_0$ and $\mu^2 \leq \lambda^2$ and $u \in H^2(\Sb^1), f \in L^2(\Sb^1)$ solving \eqref{SDWE} then
\begin{align}
\label{eq:lowenergy}
&\int |u|^2+|u'|^2 \leq C \int |f|^2&\text{ when } \mu^2<\mu_0^2, \\
\label{eq:highenergy}
&\int |u|^2 + \frac{1}{\mu^2} |u'|^2 \leq \frac{C}{\mu^2} \left( \frac{\lambda^{\frac{2}{2+\beta}}}{\mu^2} + 1 \right) \int |f|^2  &\text{ when } \mu^2 \geq \mu_0^2.
\end{align}
Here and below integrals are taken over $\Sb^1$.
The general case of $|\lambda| \geq \lambda_0$ follows by an identical argument, but we focus on $\lambda>0$ for ease of notation.  Note that in the case $\mu^2 < \mu_0^2$ we can actually have $\mu^2<0$. However, the bulk of our argument is devoted to the proof of \eqref{eq:highenergy}, where $\mu^2$ is indeed a positive real number. 

In our proof we use a version of the Morawetz multiplier method, which is arranged via the energy functional
\begin{equation}
F(x) = |u'(x)|^2 + \mu^2 |u(x)|^2.
\end{equation}
This method was introduced by \cite{mor61}. It has been used in \cite{cv02} and \cite{cd21}, we will follow its use in \cite{dk20}. 

Following the proof of Lemma 1 from \cite{dk20}, with the modification that $b$ must be chosen such that $b''<0$ on a neighborhood of each zero interval for $W$, we obtain basic estimates on the size of $u$ and $u'$ on the damped region, and \eqref{eq:lowenergy}. The proof is otherwise identical, so we do not include the details.
\begin{lemma}
If $\lambda>0, \mu^2 \in \Rb$ and $u,f$ solve \eqref{SDWE} then
\begin{equation}\label{wuestimate}
\int W |u|^2 \leq \frac{1}{\lambda} \int |fu|.
\end{equation}
Furthermore if for some $c>0, \psi \in \Cs$ is supported in $\{W>c\}$, then there exists $C>0$ such that
\begin{equation}\label{wuprime}
\int \psi|u'|^2 \leq C\left(1+ \frac{\mu^2}{\lambda}\right) \int |fu|. 
\end{equation}
Finally there are positive constants $\mu_0^2$ and $C$ such that for any $\lambda>0, \mu^2 \leq \mu_0^2$ and $u,f$ solving \eqref{SDWE} we have \eqref{eq:lowenergy}
\end{lemma}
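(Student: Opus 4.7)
The three conclusions will be proved in order since each feeds into the next. For \eqref{wuestimate} I would pair \eqref{SDWE} with $u$ in $L^{2}(\Sb^{1})$ and integrate by parts to obtain
\begin{equation}
\int |u'|^{2} - i\lambda \int W|u|^{2} - \mu^{2} \int |u|^{2} = \int f\bar{u};
\end{equation}
since $W, \mu^{2}, |u|^{2}$ are real, taking the imaginary part isolates $\lambda \int W|u|^{2} = -\cim \int f\bar{u} \le \int |fu|$.

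For \eqref{wuprime} I would choose an intermediate cutoff $\tilde\psi \in \Cs$ with $\tilde\psi \equiv 1$ on $\supp \psi$ and $\supp \tilde\psi \subset \{W > c/2\}$, and pair \eqref{SDWE} with $\tilde\psi^{2}\bar{u}$. After integrating by parts and taking real parts the $W$-contribution drops, leaving
\begin{equation}
\int \tilde\psi^{2}|u'|^{2} = \mu^{2}\int \tilde\psi^{2}|u|^{2} - 2\cre\int \tilde\psi \tilde\psi' u'\bar{u} + \cre\int f\tilde\psi^{2}\bar{u}.
\end{equation}
Cauchy--Schwarz yields $|2\tilde\psi\tilde\psi' u'\bar{u}| \le \epsilon \tilde\psi^{2}|u'|^{2} + C_{\epsilon}|\tilde\psi'|^{2}|u|^{2}$; since $\tilde\psi^{2}$ and $|\tilde\psi'|^{2}$ are supported in $\{W > c/2\}$, both are bounded pointwise by $CW$, so \eqref{wuestimate} controls $\int |\tilde\psi'|^{2}|u|^{2}$ and $\mu^{2}\int \tilde\psi^{2}|u|^{2}$ by $C(1+\mu^{2}/\lambda)\int|fu|$, after which the $\epsilon$-term is absorbed into the left-hand side. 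Since $\psi = \psi\tilde\psi^{2} \le \|\psi\|_{\infty}\tilde\psi^{2}$ pointwise, this gives \eqref{wuprime}.

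For \eqref{eq:lowenergy} I would follow the Morawetz argument of Lemma~1 in \cite{dk20}, pairing \eqref{SDWE} with the multiplier $b\bar{u}' + \tfrac{1}{2}b'\bar{u}$ for a suitable real $b \in C^{\infty}(\Sb^{1})$. A direct computation using $2\cre(u'\bar{u}'') = (|u'|^{2})'$ and $2\cre(u\bar{u}') = (|u|^{2})'$ shows the $\mu^{2}$-contributions cancel exactly after integration by parts, producing the identity
\begin{equation}
\int b'|u'|^{2} - \tfrac{1}{4}\int b'''|u|^{2} + \lambda\int Wb\,\cim(u\bar{u}') = \cre\int f\bigl(b\bar{u}' + \tfrac{1}{2}b'\bar{u}\bigr).
\end{equation}
I would choose $b$ so that $b' > 0$ on each maximal zero interval of $W$ and $b'' < 0$ on a neighborhood of such intervals, arranging the sign of $b'''$ to recover $|u|^{2}$-control on the undamped set; then the first two terms on the left dominate the contribution of $\int(|u|^{2} + |u'|^{2})$ coming from $\{W=0\}$, while the remaining pieces are all supported in $\{W > 0\}$ and are absorbed by \eqref{wuestimate} and \eqref{wuprime} together with a weighted Young inequality applied to $\lambda Wb\,\cim(u\bar{u}')$. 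With $\mu^{2} \le \mu_{0}^{2}$ small, a final Cauchy--Schwarz converts the right-hand side $\int|fu|$ into $\int|f|^{2}$ after absorption, yielding \eqref{eq:lowenergy}.

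The main obstacle will be the construction of $b$ on the circle: compactness of $\Sb^{1}$ forces $\int b' = 0$, so $b'$ must change sign, and the negative-$b'$ regions have to lie inside $\{W > 0\}$ where \eqref{wuprime} can compensate for lost $|u'|^{2}$ control. The sign modification relative to \cite{dk20} (requiring $b'' < 0$ near $\{W = 0\}$ rather than $b'' > 0$) is precisely what puts $b'''$ with the correct sign in the undamped intervals, which is necessary because the possibly unbounded nature of $W$ alters the sign bookkeeping at the boundary of the undamped region in the Morawetz identity.
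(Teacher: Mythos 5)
Your outline follows the same Morawetz-multiplier strategy as the paper, which itself gives no details but simply cites Lemma~1 of \cite{dk20} with the modification that $b''<0$ near each zero interval of $W$. So the comparison is really against that cited argument, and the verdict is mixed.

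Parts \eqref{wuestimate} and \eqref{wuprime} are essentially correct. Pairing with $u$ and taking the imaginary part is exactly right for \eqref{wuestimate}. For \eqref{wuprime}, your cutoff-multiplier computation is sound, but be aware that the $|\tilde\psi'|^2|u|^2$ piece is estimated through $\int W|u|^2\le\lambda^{-1}\int|fu|$, producing an extra $\lambda^{-1}\int|fu|$ that is \emph{not} dominated by $C(1+\mu^2/\lambda)\int|fu|$ uniformly for small $\lambda$; the paper's stated constant matches yours only because in every later application $\lambda\ge\lambda_0$. Worth flagging, but not a real gap.

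For \eqref{eq:lowenergy} there is a genuine gap. Your multiplier $b\bar u'+\tfrac12 b'\bar u$ correctly cancels the $\mu^2$-terms and yields
\begin{equation}
\int b'|u'|^2-\tfrac14\int b'''|u|^2+\lambda\int Wb\,\cim(u\bar u')=\cre\int f\bigl(b\bar u'+\tfrac12 b'\bar u\bigr),
\end{equation}
so to extract $\int|u|^2$-control on the undamped set you need $b'''<0$ there. But your justification — that the paper's condition $b''<0$ near the zero intervals ``puts $b'''$ with the correct sign'' — does not follow: $b''<0$ on an interval says nothing about the sign of $b'''$ (take $b''=-\cos x$ on $(-\pi/2,\pi/2)$, where $b'''=\sin x$ changes sign). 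So either the [dk20] argument is using a different multiplier in which $b''$ appears directly (e.g.\ a pure $a\bar u$ multiplier, which produces $-\tfrac12\int a''|u|^2$ with no $\lambda W$-term), or one needs to arrange $b'>0$, $b''<0$, and $b'''<0$ simultaneously on each zero interval — which is possible but has to be done explicitly, and is incompatible with the periodicity/transition constraints unless spelled out. A second underdeveloped step is the term $\lambda\int Wb\,\cim(u\bar u')$: since $W\notin L^\infty$ near $\partial\{W=0\}$, ``a weighted Young inequality'' does not close it against $W|u|^2$ and $\psi|u'|^2$ alone. One needs $b$ to vanish near $\partial\{W=0\}$ (so that $Wb$ stays bounded and the integrand is supported in $\{W>c\}$), and then reconcile this with the sign constraints on $b',b'',b'''$ above. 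Both of these missing bookkeeping steps are exactly the content of the ``modification'' the paper invokes, so your sketch is plausible in spirit but, as written, the passage from $b''<0$ to the needed positivity of $-\tfrac14\int b'''|u|^2$ and the absorption of the singular $\lambda Wb$-term are not yet proofs.
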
 

We now set up a multiplier, which we call $b$. The multiplier method then provides the following estimate, which must be refined to obtain our desired resolvent estimates. 
\begin{lemma}\label{resolvelemmastart}
Let $\d_j>0, j=1,\ldots, n$ be fixed constants and let 
\begin{equation}
\phi=\phi(x) = \begin{cases} \lambda^{\d_j} & |x| \in [\sigma_j-\lambda^{-\d_j}, \sigma_j+\lambda^{-\d_j}]\\
1 &\text{ otherwise}.
\end{cases}
\end{equation}
If $\lambda, \mu^2>0$ and $u,f$ solve \eqref{SDWE} then 
\begin{equation}
\int \phi |u'|^2 + \mu^2 \phi |u|^2 \lesssim \int |f|^2 + \sum_j \lambda \int W_j |uu'|.
\end{equation}
\end{lemma}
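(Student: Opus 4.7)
The strategy is the Morawetz multiplier method, following \cite{dk20}, adapted to the singular setting via the concavity modification noted just before the lemma. The plan is to construct a piecewise-smooth function $b : \Sb^1 \to \Rb$, uniformly bounded in $\lambda$, such that $b'(x) \geq c\phi(x)$ on a ``good'' region $G$ containing $\{W = 0\}$ together with the $\phi$-bumps $\bigcup_j [\sigma_j - \lambda^{-\delta_j}, \sigma_j + \lambda^{-\delta_j}]$, while on the complementary ``bad'' region $B \subset \{W \geq c_0\}$ for some fixed $c_0 > 0$, $b'$ is merely bounded (and possibly negative, so that $b$ can close up across $\Sb^1$). The uniform boundedness of $\|b\|_\infty$ in $\lambda$ follows because $\int_{\Sb^1} \phi\, dx = O(1)$ (each $\phi$-bump contributes an amount of order 1). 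The concavity condition $b'' < 0$ near the endpoints of each zero interval of $W$ is built into the construction.

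With $b$ in hand, I would multiply \eqref{SDWE} by $b \bar u'$, integrate over $\Sb^1$, and take real parts. Using $\Re(u''\bar u') = \tfrac{1}{2}(|u'|^2)'$ and $\Re(u\bar u') = \tfrac{1}{2}(|u|^2)'$, together with integration by parts (any jump contributions being placed inside $B \subset \{W \geq c_0\}$ where they are controllable via \eqref{wuestimate} and \eqref{wuprime}), this yields the Morawetz identity
\begin{equation}
\frac{1}{2}\int_{\Sb^1} b'(|u'|^2 + \mu^2|u|^2)\, dx = \Re \int f b \bar u'\, dx - \lambda\, \Im \int b W u \bar u'\, dx + \mathcal{E},
\end{equation}
where $\mathcal{E}$ collects any boundary contributions.

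Splitting the LHS into its contributions from $G$ and $B$ and transferring the $B$-part to the RHS, the ``bad'' contribution $\int_B (|u'|^2 + \mu^2|u|^2)$ lies inside $\{W \geq c_0\}$ and is therefore bounded by $C(1 + \mu^2/\lambda)\int|fu|\, dx$ using \eqref{wuestimate} and \eqref{wuprime}. By Cauchy-Schwarz and Young's inequality this is absorbed into $\int|f|^2$ plus a small multiple of $\int \phi(|u'|^2 + \mu^2|u|^2)$ that returns to the LHS. For the RHS, $|\Re\int fb\bar u'| \leq \|b\|_\infty \|f\|_{L^2}\|u'\|_{L^2} \leq \epsilon\int\phi|u'|^2 + C_\epsilon\int|f|^2$ using $\phi \geq 1$, with the $\epsilon$-piece absorbed, and $|\lambda\, \Im\int b W u \bar u'| \leq \|b\|_\infty\lambda\int W|uu'| \leq C\sum_j \lambda\int W_j|u u'|$, giving precisely the form on the RHS of the statement.

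The main obstacle is the explicit construction of $b$ with the three required properties (the lower bound $b' \gtrsim \phi$ on $G$, the concavity $b'' < 0$ near the zero intervals of $W$, and the uniform boundedness of $\|b\|_\infty$ in $\lambda$) while balancing $\int_{\Sb^1} b'\, dx$ appropriately on the complement; the absorption of the bad-region contribution via the preceding damping estimates is then routine bookkeeping following the strategy of \cite{dk20}.
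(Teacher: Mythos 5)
Your proposal is correct and follows essentially the same route as the paper: a Morawetz multiplier $b$ with $b' = \lambda^{\delta_j}$ on each bump interval $[\sigma_j - \lambda^{-\delta_j}, \sigma_j + \lambda^{-\delta_j}]$, $b' = -M$ on $\{W>c\}$ away from the bumps (to close $b$ up on $\Sb^1$), and $b'=1$ elsewhere; the identity $0 = \int_{\Sb^1}(bF)'$ with $F = |u'|^2 + \mu^2|u|^2$; then adding multiples of \eqref{wuestimate} and \eqref{wuprime} to restore the $\phi$-weight where $b'$ is negative, and Young's inequality (using $\phi \geq 1$) to absorb the $|fu'|$ and $|fu|$ terms. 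The ``main obstacle'' you flag — the construction of $b$ — is in fact quite light in the paper: $b$ is simply piecewise linear with $b'$ taking the three explicit values above, its periodicity arranged by choosing $M$ large, and its uniform bound in $\lambda$ following from the $O(1)$ area under each $\phi$-bump exactly as you observe. One small inaccuracy: the concavity condition $b''<0$ mentioned before the lemma pertains to the \emph{different} multiplier used to establish \eqref{wuestimate}, \eqref{wuprime}, and \eqref{eq:lowenergy}; the $b$ in this lemma is piecewise linear and needs no such condition, and there are no boundary/jump terms $\mathcal{E}$ since $b$ is continuous on $\Sb^1$.
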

\begin{proof}
Choose $c>0$ small enough so that $\{W>c\}$ intersects the support of each $W_j$ and let $b$ be piecewise linear on $\Sb^1$ with
\begin{equation}
b'(x)= \begin{cases}
\lambda^{\d_j} & |x| \in [\sigma_j - \lambda^{-\d_j}, \sigma_j + \lambda^{-\d_j}] \\
-M & \{W>c\} \text{ and } |x| \not \in [\sigma_j - \lambda^{-\d_j}, \sigma_j + \lambda^{-\d_j}] \\
1 & \text{ elsewhere}.
\end{cases}
\end{equation}
It is possible to choose $M$ big enough so that $b$ is indeed periodic on $\Sb^1$. So then let $F=|u'|^2 + \mu^2 |u|^2$ and compute 
\begin{multline}
0 = \int_0^{2\pi} (bF)'= \int b'|u'|^2 + \mu^2 b'|u|^2 + 2\Re u'' \bar{u}' + \mu^2 2 \Re u' \bar{u}'\\
= \int b'|u'|^2 + \mu^2 b'|u|^2 - 2 b \Re f \bar{u}' - 2 \lambda b \Re i W u \bar{u}'. 
\end{multline}
Therefore
\begin{equation}
\int b' |u'|^2 + \mu^2 b' |u|^2 \leq 2 \int b |fu'| + 2 \lambda \int b W |uu'|.
\end{equation}
Now adding a multiple of \eqref{wuestimate} and \eqref{wuprime} to both sides gives
\begin{equation}
\int \phi |u'|^2 + \phi \mu^2 |u|^2 \leq 2 \int |fu'| + 2\lambda \int W|uu'| + \lambda^{-1} \int |fu| + C\left( 1+ \frac{\mu^2}{\lambda}\right) \int |fu|.
\end{equation}
Applying Young's inequality for products to the $|fu|$ terms on the right hand side, absorbing the resultant $|u|^2$ terms back into the left hand side and recalling $W=\sum W_j$ and $\mu^2 \leq \lambda^2$ gives the desired inequality. 
\end{proof}

It now remains to estimate the $W_j$ terms. We begin with an estimate in the case $M=\Sb^1$, so $\mu^2=\lambda^2$ and consider $W_j \in L^{\infty}$. Because $W_j$ satisfies hypotheses of the classical geometric control argument, one expects this argument to be straightforward and it is.
\begin{lemma}\label{linftycontrol}
When $W_j \in L^{\infty}$ and $M=\Sb^1$ for any $\e>0$ there exists $C>0$ such that if $\lambda>0, \mu^2 =\lambda^2$ and $u,f$ solve \eqref{SDWE}
\begin{equation}
\lambda \int W_j |uu'| \leq C \int |f|^2 + \frac{\e \mu^2}{2} \int |u|^2 + \frac{\e}{2} \int |u'|^2.
\end{equation}
\end{lemma}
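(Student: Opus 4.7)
The plan is to use Cauchy--Schwarz with the splitting $W_j = W_j^{1/2}\cdot W_j^{1/2}$, bound the two resulting factors using \eqref{wuestimate} together with the pointwise inequality $W_j \le W$ and the $L^\infty$-bound on $W_j$, and then apply Young's inequality twice to separate out the three pieces appearing on the right-hand side of the claim.

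First I would write
\begin{equation}
\lambda \int W_j |uu'| \le \lambda \left(\int W_j |u|^2\right)^{1/2}\left(\int W_j |u'|^2\right)^{1/2}.
\end{equation}
Since $W=\sum_k W_k$ with all $W_k\ge 0$, we have $W_j \le W$ pointwise, so \eqref{wuestimate} combined with Cauchy--Schwarz yields $\int W_j |u|^2 \le \lambda^{-1}\|f\|_{L^2}\|u\|_{L^2}$. For the other factor, $W_j\in L^\infty$ immediately gives $\int W_j|u'|^2 \le \|W_j\|_{L^\infty}\|u'\|_{L^2}^2$. Combining these,
\begin{equation}
\lambda \int W_j |uu'| \le \lambda^{1/2}\|W_j\|_{L^\infty}^{1/2}\,\|f\|_{L^2}^{1/2}\|u\|_{L^2}^{1/2}\,\|u'\|_{L^2}.
\end{equation}

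Now I would apply Young's inequality $ab\le a^2/(2\e)+\e b^2/2$ to extract $\|u'\|_{L^2}^2$ with the desired coefficient $\e/2$, leaving $\frac{\lambda\|W_j\|_{L^\infty}}{2\e}\|f\|_{L^2}\|u\|_{L^2}$. A second application of Young's inequality to this product, with parameter tuned so that the resulting $\|u\|_{L^2}^2$ coefficient equals exactly $\e\mu^2/2$ --- using crucially $\mu^2=\lambda^2$ so that the explicit $\lambda$ cancels --- produces an $\|f\|_{L^2}^2$ coefficient which is a constant depending only on $\|W_j\|_{L^\infty}$ and $\e$. Adding all three contributions yields the claimed inequality.

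I do not anticipate a serious obstacle. The essential observation is that \eqref{wuestimate} contributes a $\lambda^{-1}$ under a square root, which combines with the explicit $\lambda$ in $\lambda\int W_j|uu'|$ to leave a net $\lambda^{1/2}$; this then balances correctly against the factor $\mu$ in $\e\mu^2\|u\|_{L^2}^2/2$ precisely because $\mu^2=\lambda^2$. This is why the lemma is stated only in the $\Sb^1$ setting, where no vertical Fourier decomposition introduces a spectral parameter $\mu^2$ distinct from $\lambda^2$.
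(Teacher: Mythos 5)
Your proof is correct and takes essentially the same approach as the paper: both rely on the boundedness of $W_j$, the estimate \eqref{wuestimate}, and the identity $\mu^2=\lambda^2$, combined via Young's/Cauchy--Schwarz inequalities. The paper applies Young's inequality first to split $\lambda W_j|u|\cdot|u'|$ (then uses $W_j^2\le \|W_j\|_{L^\infty}W_j$), whereas you split $W_j = W_j^{1/2}\cdot W_j^{1/2}$ by Cauchy--Schwarz before invoking boundedness, but the substance is the same.
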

\begin{proof}
Well, using that $W_j$ is bounded and \eqref{wuestimate}
\begin{align}
\lambda \int W_j |uu'| &\leq \frac{\lambda^2}{2\e} \int W_j^2 |u|^2 + \frac{\e}{2} \int |u'|^2 \leq \frac{C\lambda^2}{2 \e} \int W_j |u|^2 + \frac{\e}{2} \int |u'|^2 \\
&\leq \frac{C\lambda}{2\e} \int |fu| + \frac{\e}{2} \int |u'|^2 \\
&\leq \frac{C \lambda^2}{\mu^2} \int |f|^2 + \frac{\e \mu^2}{2} \int |u|^2 + \frac{\e}{2} \int |u'|^2. 
\end{align}
Finally, since $\mu^2= \lambda^2$ this gives the desired inequality. 
\end{proof}

We now turn to $W_j$ with polynomial type singularities. Following the structure of \cite{dk20} we prove an intermediate result and reduce the problem to estimating $V_j \chi_j |fu|$. In this proof we specify $\d_j=\frac{1}{2+\beta_j}$ in order to control the growth of a term. 
The proof of this lemma requires a change in technique from the proof in \cite{dk20} in order to account for the singularity and the fact that $W_j'$ is larger than $W_j$ near its singularity.

Let $\chi \in \Cs$ be supported on $|x|>1/2$ and be identically 1 on $|x|>1$. 
\begin{lemma}\label{resolvelemmamain}
If $W_j \in X^{\beta_j}_{\theta_j}$, then for any $\ti{\e}>0$, there exists $C>0$ and $\e>0$, such that if $\chi_j(x) = \chi\left(\frac{x-\sigma_j}{\e \lambda^{-\d_j}}\right)$ and if $\lambda, \mu^2>0$ and $u,f$ solve \eqref{SDWE}, then
\begin{equation}
\lambda \int W_j |uu'| \leq C (\lambda^{\d_j} + \mu) \int |fu| + C \lambda^{1/2}\left( \int |fu| \right)^{1/2} \left( \int V_j \chi_j |fu| \right)^{1/2} + \ti{\e} \int \phi |u'|^2. 
\end{equation}
\end{lemma}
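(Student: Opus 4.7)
The plan is to decompose
\begin{equation}
\lambda \int W_j |uu'| = \lambda \int (1-\chi_j) W_j |uu'| + \lambda \int \chi_j W_j |uu'|
\end{equation}
and estimate the near-singularity piece (supported where $|x-\sigma_j| \leq \e \lambda^{-\d_j}$, with $\phi = \lambda^{\d_j}$) and the far piece separately. This reflects the fact that $W_j$ is unbounded on the near region but is controlled by a power of $\lambda$ on the far region, so the two pieces demand different techniques.

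For the far piece, Cauchy--Schwarz combined with \eqref{wuestimate} yields
\begin{equation}
\lambda \int \chi_j W_j |uu'| \leq \lambda^{1/2}\left(\int |fu|\right)^{1/2}\left(\int \chi_j W_j |u'|^2\right)^{1/2}.
\end{equation}
I would estimate $\int \chi_j W_j |u'|^2$ by pairing the equation with the bounded multiplier $V_j \chi_j^2 \bar u$, integrating by parts, and taking the real part, producing a Morawetz-type identity of the schematic form $\int V_j \chi_j^2 |u'|^2 = \mu^2 \int V_j \chi_j^2 |u|^2 + \Re \int V_j \chi_j^2 f \bar u + (\text{IBP correction})$. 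The $\mu^2$-term is bounded by $C \mu^2 \lambda^{-1} \int |fu|$ via \eqref{wuestimate}, while the source term contributes $\int V_j \chi_j |fu|$. After the square root and multiplication by $\lambda^{1/2}(\int |fu|)^{1/2}$, these produce precisely the $C\lambda^{1/2}(\int |fu|)^{1/2}(\int V_j \chi_j |fu|)^{1/2}$ and $C\mu \int |fu|$ terms of the target.

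For the near piece, I would apply the weighted Young's inequality $\lambda W_j |uu'| \leq \ti{\e}\phi|u'|^2 + \frac{(\lambda W_j)^2}{4 \ti{\e} \phi}|u|^2$, absorbing the first term into $\ti{\e} \int \phi |u'|^2$. Since $\phi = \lambda^{\d_j}$ on the near region, the residual equals $\frac{\lambda^{2-\d_j}}{4\ti{\e}}\int(1-\chi_j) W_j^2 |u|^2$, which needs to be $\lesssim \lambda^{\d_j} \int |fu|$. Because $W_j^2 \notin L^1_{\mathrm{loc}}$ when $\beta_j \leq -\tfrac12$, no pointwise estimate on $W_j$ closes this; instead I would pair the equation with $(1-\chi_j) W_j \bar u$ (or an appropriate mollified variant designed to sidestep the singular $W_j'$), take the imaginary part, and use \eqref{wuestimate} to obtain the integrated bound on $\int (1-\chi_j) W_j^2 |u|^2$. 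The main obstacle in both pieces is the IBP correction, which always carries a factor of $W_j'$ or $V_j'$ strictly more singular than $W_j$ or $V_j$---precisely the complication that forces a departure from the argument of \cite{dk20}. I expect to handle it via a secondary Young's inequality balancing the singular factor (of size $\lambda^{\d_j(1-\beta_j)}$ on the transition annulus $|x-\sigma_j|\sim\lambda^{-\d_j}$) against the $\ti{\e}\int\phi|u'|^2$ reservoir, with the parameter $\e$ in the construction of $\chi_j$ tuned to enable the absorption to close.
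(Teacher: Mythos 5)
Your far-piece argument matches the paper's: Cauchy--Schwarz against $\int W_j|u|^2$, then an integration by parts against $V_j\chi_j\bar u$, using the equation and the real part to reduce to $\mu^2\int V_j\chi_j|u|^2 + \int V_j\chi_j|fu|$ plus a recursion term. That is exactly what the paper does.

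The near piece has a genuine gap. After the weighted Young's inequality, you need $\int_{|x-\sigma_j|<\e\lambda^{-\d_j}}W_j^2|u|^2 \lesssim \lambda^{2\d_j-2}\int|fu|$, and you propose to get it by pairing the equation with $(1-\chi_j)W_j\bar u$ and taking the imaginary part. Integrating by parts the $-u''$ term produces $\Im\int u'\bigl((1-\chi_j)W_j\bigr)'\bar u$, which splits into $-\Im\int \chi_j'W_j\,u'\bar u$ and $\Im\int(1-\chi_j)W_j'\,u'\bar u$. Your plan only addresses the first, which is indeed of size $\e^{\beta_j-1}\lambda^{\d_j(1-\beta_j)}$ and confined to the transition annulus. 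But the second is supported throughout the near region, where $(1-\chi_j)\equiv 1$ and $W_j'\sim|x-\sigma_j|^{\beta_j-1}$ is \emph{unboundedly} singular --- a full power worse than $W_j$ --- so there is no uniform bound $|W_j'|\lesssim\lambda^{\d_j(1-\beta_j)}$ there. No Cauchy--Schwarz/Young split of $\int(1-\chi_j)|W_j'||u'||u|$ lands in the controlled quantities: weighting toward $\int\phi|u'|^2$ forces a residual $\int\phi^{-1}|W_j'|^2|u|^2$ whose weight $|x|^{2\beta_j-2}$ cannot be dominated by $W_j$ (the only weight covered by \eqref{wuestimate}) near $\sigma_j$. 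The vague ``mollified variant'' does not fix this: any surrogate $g$ with $g\sim W_j$ on the near region necessarily has $g'$ at least as bad as $W_j'$, and a surrogate that is bounded there (mollified at scale $\e\lambda^{-\d_j}$) no longer dominates $W_j^2$, which was the whole point of the pairing.

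The paper does not pair the equation with a $W_j$-weighted multiplier in the near region at all. Instead, after Cauchy--Schwarz reduces the problem to $\int_{\text{near}}V_j^2|u|^2$, it rewrites $V_j^2=|x|^{2\beta_j}=-C\,\partial_x\bigl(|x|^{2\beta_j+1}\bigr)$. The crucial point is that $2\beta_j+1>-1$ precisely because $\beta_j>-1$, so the primitive $|x|^{2\beta_j+1}$ is locally integrable even when $V_j^2$ is not. Integrating by parts against a cutoff $\psi_\e$ then trades the non-integrable $|x|^{2\beta_j}$ for two benign terms: one weighted by $|x|^{2\beta_j+1}\psi_\e'$, which is localized to the annulus where $V_j$ is controlled by $\lambda^{-\beta_j\d_j}$ and handled by \eqref{wuestimate}, and one of the form $\int|x|^{2\beta_j+1}\psi_\e|uu'|$, which is handled by Young's inequality using the boundedness of $|x|^{2+2\beta_j}$ (again because $\beta_j>-1$) and the $\int\phi|u'|^2$ reservoir. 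That antiderivative trick is the device your proposal is missing, and it is the step in the proof that genuinely departs from the bounded-damping argument of \cite{dk20}.
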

\begin{proof}
Recall throughout that $\beta_j \in [-1,0)$. From $W_j \in X^{\beta_j}_{\theta_j}$ we have that there exists $C_j>0$ and $V_j=(|x|-\sigma_j)_+^{\beta_j}$ such that $\frac{1}{C_j} V_j(x) \leq W(x) \leq C_j V_j(x)$.

To begin make a change of variables so that $\sigma_j=0$ then $V_j=|x|^{\beta_j}$ and $\int W_j|uu'| \leq C \int V_j |uu'|$. 
The strategy is to split this integral into $|x|>\e \lambda^{-\d_j}$ and $|x|<\e \lambda^{-\d_j}$ where $\e>0$ is to be chosen later. 

Case 1: $|x|> \e \lambda^{-\d_j}$. Note $\chi_j$ as defined above is supported on $|x|>\frac{\e \lambda^{-\d_j}}{2}$ and is identically 1 on $|x|>\e \lambda^{-\d_j}$. So applying Cauchy-Schwarz and \eqref{wuestimate}
\begin{align}
\int_{|x|>\e \lambda^{-\d_j}} V_j |uu'| &\leq C \left(\int V_j |u|^2 \right)^{1/2} \left( \int V_j \chi_j |u'|  \right)^{1/2} \\
&\leq \lambda^{-1/2} \left( \int |fu| \right)^{1/2} \left( \int V_j \chi_j |u'|  \right)^{1/2}.
\end{align}
Using integration by parts 
\begin{equation}\label{eq:vjxbig}
\int V_j \chi_j |u'|^2 = - \Re \int (V_j \chi_j)' u' \bar{u} - \Re \int V_j \chi_j u'' \bar{u}.
\end{equation}
To control the first term note $|x| \geq \frac{\e \lambda^{-\d_j}}{2}$ on $\supp \chi_j$ so $\frac{1}{|x|} \leq \frac{2 \lambda^{\d_j}}{\e}$ there, also $|\chi'_j| \lesssim \frac{\lambda^{\d_j}}{\e}$ and so $ |(V_j \chi_j)'| \leq C \frac{\lambda^{\d_j} V_j}{\e}$. Therefore
\begin{equation}\label{eq:firstvjxbig}
\Re \int (V_j)' u' \bar{u} \leq C \frac{\lambda^{\d_j}}{\e} \int V_j |u u'|.
\end{equation}
For the second term apply \eqref{SDWE} and \eqref{wuestimate} to get 
\begin{align}\label{eq:secondvjxbig}
\Re \int V_j \chi u'' \bar{u}&= \Re \int V_j \chi (-i\lambda W u - \mu^2 u - f) \bar{u} \leq \Re \int V_j \chi \mu^2 |u|^2 + V_j \chi |fu| \nonumber \\
&\leq \Re \int \frac{\mu^2}{\lambda} |fu| + V_j \chi |fu|.
\end{align}
Combining \eqref{eq:firstvjxbig} and \eqref{eq:secondvjxbig} with \eqref{eq:vjxbig} gives 
\begin{equation}\label{eq:xbigfinal}
\lambda \int_{|x|>\e \lambda^{-\d_j} } V_j |uu'| \leq C \lambda^{1/2} \left( \int |fu| \right)^{1/2} \left( \frac{\lambda^{\d_j}}{\e} \int V_j |uu'| + \frac{\mu^2}{\lambda}|fu| + V_j \chi |fu| \right)^{1/2}.
\end{equation}

Case 2: $|x|<\e \lambda^{-\d_j}$. To begin note that 
\begin{equation}\label{mudj}
\phi \lambda^{-\d_j} = 1 \text{ on } |x|< \e \lambda^{-\d_j}< \lambda^{-\d_j}.
\end{equation}
So applying Cauchy-Schwarz
\begin{equation}\label{eq:xsmall}
\lambda \int_{|x| < \e \lambda^{-\d_j}} V_j |uu'| \leq \lambda^{1-\frac{\d_j}{2}} \left( \int_{|x|< \e\lambda^{-\d_j} } V_j^2 |u|^2 \right)^{1/2} \left( \int \phi |u'|^2 \right)^{1/2}.
\end{equation}
Now let $\psi$ be a cutoff supported in $|x|<2 \lambda^{-\d_j}$ and identically 1 on $|x|<\lambda^{-\d_j}$ then let $\psi_{\e}(x) = \psi(x/\e)$ and insert it into the below integral. Then rewriting $|x|^{2\beta_j}= -C \p_x |x|^{2\beta_j+1}$ and integrating by parts 
\begin{align}\label{eq:vjxsmall}
\int_{|x| <\e \lambda^{-\d_j} } V_j^2 |u|^2 &\leq \int |x|^{2 \beta_j} \psi_{\e} |u|^2 = - C \int \p_x\left(|x|^{2\beta_j+1} \right) \psi_{\e} |u|^2 \nonumber \\ 
&=C \int |x|^{2\beta_j+1} \p_x (\psi_{\e} |u|^2 ) \nonumber\\
&\leq C\int |x|^{2\beta_j+1} \psi_{\e}' |u|^2 + C \int |x|^{2 \beta_j+1} \psi_{\e} |uu'|.
\end{align}
To control the first term of \eqref{eq:vjxsmall} note 
\begin{align}\label{eq:firstvjxsmall}
\int_{|x| < 2 \e \lambda^{-\d_j} } |x|^{2\beta_j+1} \psi_{\e}' |u|^2 &= \int_{|x|<2 \e \lambda^{-\d_j}} |x|^{1+\beta_j} |x|^{\beta_j} \e^{-1} \psi' |u|^2 \nonumber \\
&\leq C \e^{\beta_j} \lambda^{-\beta_j\d_j} \int_{|x|<\e \lambda^{-\d_j}} V_j |u|^2 \nonumber \\
&\leq C \e^{\beta_j} \lambda^{-\beta_j\d_j} \int W |u|^2 \leq C \e^{\beta_j} \lambda^{-\beta_j \d_j -1} \int |fu|. 
\end{align}
Now let $\eta>0$ be a constant to be specified and applying Young's inequality for products to the second term of \eqref{eq:vjxsmall}, then using that $\psi_{\e}$ is supported on $\{|x| < 2 \e \lambda^{-\d_j}\}$ and \eqref{mudj} 
\begin{align}\label{eq:secondvjxsmall}
 \int |x|^{2\beta_j+1} \psi_{\e} |uu'| &\leq \frac{\eta}{2} \int |x|^{2\beta_j} \psi_{\e} |u|^2 + \frac{1}{2\eta} \int |x|^{2+2\beta_j} \psi_{\e} |u'|^2 \nonumber \\
&\leq  \frac{\eta}{2} \int |x|^{2\beta_j} \psi_{\e} |u|^2 + \frac{C}{\eta} \e^{2+2\beta_j} \lambda^{-2\beta_j \d_j -3\d_j} \int \phi |u'|^2.
\end{align}
So then combining \eqref{eq:vjxsmall}, \eqref{eq:firstvjxsmall} and \eqref{eq:secondvjxsmall}
\begin{align}
&\int \psi_{\e} V_j^2 |u|^2 \leq  C \e^{\beta_j} \lambda^{-\beta_j \d_j -1} \int |fu| + \eta C \int |x|^{2\beta_j} \psi_{\e} |u|^2 + \frac{C}{\eta} \e^{2+2\beta_j} \lambda^{-2\beta_j \d_j -3\d_j} \int \phi |u'|^2 \\
&\int \psi_{\e} V_j^2 |u|^2 \leq C \e^{\beta_j} \lambda^{-\beta_j \d_j -1} \int |fu| + C \e^{2+2\beta_j} \lambda^{-2\beta_j \d_j -3\d_j} \int \phi |u'|^2.
\end{align}
Where the second integral in the first inequality was absorbed back into the left hand side by choosing $\eta$ small enough. Combining this last inequality with \eqref{eq:xsmall}, letting $\e_0>0$ be a constant to be specified and applying Young's inequality for products
\begin{align}\label{eq:xsmallfinal}
\lambda \int_{|x|<\lambda^{-\d_j \e}} V_j |uu'| &\leq \lambda^{1-\d_j/2} \left( C \e^{\beta_j} \lambda^{-\beta_j \d_j-1} \int |fu| + C \e^{2+2\beta_j} \lambda^{-2\beta_j \d_j - 3\d_j} \int \phi |u'|^2 \right)^{1/2} \left( \int \phi |u'|^2 \right)^{1/2} \nonumber \\
&\leq \frac{\lambda^{2-\d_j}}{2 \e_0} \left( C \e^{\beta_j} \lambda^{-\beta_j\d_j -1} \int |fu| + C \e^{2+2\beta_j} \lambda^{-2\beta_j \d_j-3\d_j} \int \phi |u'|^2 \right) + \frac{\e_0}{2} \int \phi |u'|^2 \nonumber\\
&= C \e^{\beta_j} \lambda^{1-(\beta_j+1)\d_j} \int |fu| + \frac{C \e^{2+2\beta_j}}{\e_0} \lambda^{-(2\beta_j+4)\d_j +2} \int \phi |u'|^2 + \frac{\e_0}{2} \int \phi |u'|^2.
\end{align}
Now combining \eqref{eq:xbigfinal} and \eqref{eq:xsmallfinal} then applying Young's inequality for products to absorb the $V_j |uu'|$ term and a $\lambda^{1/2}$ from the right hand side back into the left hand side. 
\begin{align}
\lambda \int V_j |uu'| &\leq C \lambda^{1/2} \left( \int |fu| \right)^{1/2} \left( \frac{\lambda^{\d_j}}{\e} \int V_j |uu'| + \frac{\mu^2}{\lambda} |fu| + V_j \chi |fu| \right)^{1/2} \\
&+C \e^{\beta_j} \lambda^{1-(\beta_j+1)\d_j} \int |fu| + \frac{C\e^{2+2\beta_j} }{\e_0} \lambda^{-(2\beta_j+4)\d_j+2} \int \phi |u'|^2 + \frac{\e_0}{2} \int \phi |u'|^2\\
&\leq C(\lambda^{\d_j} + \mu ) \int |fu| + C \lambda^{1/2} \left(\int |fu| \right)^{1/2} \left( \int V_j \chi |fu| \right) + \left( \frac{C \e^{2+2\beta_j}}{\e_0} + \frac{\e_0}{2} \right) \int \phi |u'|^2.
\end{align}
Where the $\lambda$ dependence on the second to last $\phi |u'|^2$ term is eliminated by setting $\d_j=\frac{1}{\beta_j+2}$, 
as then $-(2\beta_j+4)\d_j+2=0$. This also ensures that $1-(\beta_j+1)\d_j=\d_j$. 

Choosing $\e_0$ small enough and then $\e$ small enough gives the desired inequality. 
\end{proof}
\begin{remark}\label{oneside}
Note that when $M=\Sb^1$, the estimate of $\int W_j |uu'|$ can be modified to have a third case without changing the result. That is for some small $c>0$, consider $c>|x|>\e \lambda^{-\d_j}, |x|<\e\lambda^{-\d_j}$ and $|x|>c$. The first two cases are proved as normal and the case $c>|x|$ can be controlled by Lemma \ref{linftycontrol}. This is what makes it possible to address $W_j=1_{[\pi,-\sigma]}(x) V_j$ or $1_{[\sigma,\pi)}(x)V_j$ for $V_j \in X^{\beta_j}_{\theta_j}$ when $M=\Sb^1$. 
\end{remark}

To obtain the desired resolvent estimate it remains to control the $V_j \chi_j |fu|$ term.
\begin{lemma}\label{resolvelemmafinal}
If $\lambda>0, \mu^2 \in \Rb$ and $u,f$ solve \eqref{SDWE} then
\begin{equation}
\lambda^{1/2} \left( \int |fu| \right)^{1/2} \left( \int V_j \chi_j |fu| \right)^{1/2} \leq C \int |f|^2.
\end{equation}
\end{lemma}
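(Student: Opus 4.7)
The plan is to use Cauchy--Schwarz to split the weighted integral into an $|f|^2$-factor and a $|u|^2$-factor, then exploit two features of the cutoff: on $\supp\chi_j$ the weight $V_j$ is bounded by $C\lambda^{-\beta_j\delta_j}$ (since $|x-\sigma_j|\ge\varepsilon\lambda^{-\delta_j}/2$ there and $\beta_j<0$), while globally $V_j\le C_0 W$, so the $u$-integral can be controlled by $\int W|u|^2$ and hence by \eqref{wuestimate}.

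Concretely, I would first write
\begin{equation}
\int V_j\chi_j|fu|\le\Bigl(\int V_j\chi_j|f|^2\Bigr)^{1/2}\Bigl(\int V_j\chi_j|u|^2\Bigr)^{1/2},
\end{equation}
then bound the first factor by $C\lambda^{-\beta_j\delta_j/2}\|f\|_{L^2}$ using the pointwise estimate on $V_j\chi_j$ from the support condition, and the second factor by $C\lambda^{-1/2}(\int|fu|)^{1/2}$ using $V_j\chi_j\le C W$ together with \eqref{wuestimate}. Multiplying, I obtain
\begin{equation}
\int V_j\chi_j|fu|\le C\lambda^{(-\beta_j\delta_j-1)/2}\|f\|_{L^2}\Bigl(\int|fu|\Bigr)^{1/2}.
\end{equation}
Inserting this into the target quantity gives
\begin{equation}
\lambda^{1/2}\Bigl(\int|fu|\Bigr)^{1/2}\Bigl(\int V_j\chi_j|fu|\Bigr)^{1/2}\le C\lambda^{\tfrac{1}{2}+\tfrac14(-\beta_j\delta_j-1)}\|f\|_{L^2}^{1/2}\Bigl(\int|fu|\Bigr)^{3/4},
\end{equation}
and using the choice $\delta_j=1/(\beta_j+2)$ from Lemma \ref{resolvelemmamain}, the $\lambda$-exponent simplifies to $1/(2(\beta_j+2))$, a positive fractional power that must still be absorbed.

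The last step, and the main obstacle, is to turn $\|f\|^{1/2}(\int|fu|)^{3/4}$ into a clean $\|f\|_{L^2}^2$: after applying $\int|fu|\le\|f\|\|u\|$ and Young's inequality at conjugate exponents $(8/5,8/3)$, this becomes a bound of the form $C\lambda^{1/(2(\beta_j+2))}(\|f\|_{L^2}^2+\|u\|_{L^2}^2)$. The $\|u\|_{L^2}^2$ tail, weighted by this positive power of $\lambda$, is exactly what has to be absorbed; I would handle it by combining this estimate with the coercive terms on the left of Lemmas \ref{resolvelemmastart}--\ref{resolvelemmamain}. Concretely, when the present lemma is chained into the Morawetz chain, $\lambda^{1/(2(\beta_j+2))}\|u\|^2$ is dominated by $\mu^2\int\phi|u|^2$ (using $\phi\ge 1$) in the high-frequency regime $\mu^2\ge\mu_0^2$, while the complementary low-frequency regime is already disposed of by \eqref{eq:lowenergy}. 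The $\|f\|_{L^2}^2$ piece that survives after this absorption is what the lemma records as $C\int|f|^2$.
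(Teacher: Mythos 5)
Your opening reductions are fine (the pointwise bound $V_j\chi_j\le C\epsilon^{\beta_j}\lambda^{-\beta_j\delta_j}$ on $\supp\chi_j$ and the use of \eqref{wuestimate} to get $\int V_j\chi_j|u|^2\le C\lambda^{-1}\int|fu|$), but the endgame has a genuine gap. After your manipulations you are left with a term of size $\lambda^{1/(2(\beta_j+2))}\|u\|_{L^2}^2=\lambda^{\delta_j/2}\|u\|_{L^2}^2$, and you propose to absorb it into $\mu^2\int\phi|u|^2$ in the high-frequency regime $\mu^2\ge\mu_0^2$. That absorption requires $\lambda^{\delta_j/2}\lesssim\mu^2$, which fails precisely in the regime $\mu^2\sim\mu_0^2$ (a fixed constant) with $\lambda\to\infty$ — and this is exactly the near-resonant regime ($\lambda_n^2$ close to $\lambda^2$ in Proposition \ref{3t2}) that produces the $\lambda^{2/(2+\beta)}$ loss and drives Theorem \ref{torusthm}. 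Moreover the lemma as stated asserts a bound by $C\int|f|^2$ with a constant uniform in $\mu^2\in\Rb$ (even $\mu^2\le 0$ is allowed) and with nothing left over to be absorbed elsewhere in the Morawetz chain, so an argument that only closes after borrowing coercivity from Lemmas \ref{resolvelemmastart}--\ref{resolvelemmamain} does not prove this statement and would in any case degrade the final resolvent bound.

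The missing idea is the paper's decomposition of $f$ by support, which supplies a full extra power of $\lambda^{-1}$ that your single application of \eqref{wuestimate} does not. Split $f$ into the part supported in $(\supp V_j)^c$, for which $\int V_j\chi_j|fu|$ vanishes, and the part supported in $\supp V_j$. On $\supp V_j$ one has the lower bound $V_j\ge c>0$ (here $\beta_j<0$ is what makes $V_j=(|x|-\sigma_j)^{\beta_j}$ bounded below on its support), so $\int|fu|\le c\int V_j^{1/2}|fu|\le c\,(\int|f|^2)^{1/2}(\int V_j|u|^2)^{1/2}$, and \eqref{wuestimate} gives $\int|fu|\le C\lambda^{-1}\int|f|^2$. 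Feeding this back in, together with a second use of \eqref{wuestimate} in the chain $\int\chi_jV_j^2|u|^2\le C\lambda^{-\delta_j\beta_j}\int V_j|u|^2\le C\lambda^{-\delta_j\beta_j-2}\int|f|^2\le C\int|f|^2$ (using $-\delta_j\beta_j-2\le 0$), beats both the $\lambda^{1/2}$ prefactor and the $\lambda^{-\beta_j\delta_j}$ growth outright, with no absorption and no dependence on $\mu^2$. Your estimate uses \eqref{wuestimate} only once, which is why you fall short by the positive power $\lambda^{\delta_j/2}$.
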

\begin{proof}
By linearity there are two cases
\begin{enumerate}
	\item $\supp f \subset (\supp V_j)^c$
	\item $\supp f \subset \supp V_j$
\end{enumerate}
In case 1 the term $\int V_j \chi_j |fu|$ vanishes. 

In case 2 note that $V_j \geq C$ on $\supp V_j$ so 
\begin{equation}
\int |fu| \leq c \int V_j^{1/2} |fu|.
\end{equation}
Then using Cauchy-Schwarz and \eqref{wuestimate} 
\begin{equation}
\int |fu| \leq c\left( \int |f|^2 \right)^{1/2} \left( \int V_j |u|^2 \right)^{1/2} \leq \frac{C}{\lambda^{1/2}} \left( \int |f|^2 \right)^{1/2} \left( \int |fu| \right)^{1/2}.
\end{equation}
Therefore $\int |fu| \leq \frac{C}{\lambda} \int |f|^2$. 
From this and Cauchy-Schwarz
\begin{equation}
\lambda^{1/2} \left( \int |fu| \right)^{1/2} \left( \int \chi_j V_j |fu| \right)^{1/2} \leq \left( \int |f|^2 \right)^{1/2} \left( \int |f|^2\right)^{1/4} \left( \int \chi_j V_j^2 |u|^2 \right)^{1/4}.
\end{equation}
It remains to control the final term on the right hand side. Because $\chi_j$ is supported on $x>\e \lambda^{-\d_j}$ and $V_j=|x|^{\beta_j}$.
\begin{equation}
\int \chi V_j^2 |u|^2 \leq C \lambda^{-\d_j \beta_j} \int V_j |u|^2 \leq C \lambda^{-\d_j \beta_j-1} \int |fu| \leq C \lambda^{-\d_j \beta_j -2} \int |f|^2 \leq C \int |f|^2.
\end{equation}
Where the final inequality holds because $\beta \geq -1 >- 4/3$ and $\d \beta = \frac{\beta}{\beta+2} \leq 2$ when $\beta \leq 2\beta+4$ so $-\d\beta-2 \leq 0$ . Therefore in case 2
\begin{equation}
\lambda^{1/2} \left( \int |fu| \right)^{1/2} \left( \int V_j \chi |fu| \right)^{1/2} \leq C \int |f|^2. 
\end{equation}
\end{proof}
Combining Lemmas \ref{resolvelemmamain} and \ref{resolvelemmafinal} and, when $M=\Sb^1$, Lemma \ref{linftycontrol}
\begin{equation}
\lambda \int W_j |u u'| \leq C \left( \lambda^{\d_j} + \mu \right) \int |fu| + C \int |f|^2 + \tilde{\e} \int \phi |u'|^2.
\end{equation}
This along with Lemma \ref{resolvelemmastart}
\begin{equation}
\int \phi |u'|^2 +\mu^2 \phi |u|^2 \leq C \sum_j\left( \lambda^{\d_j} + \mu \right) \int |fu| + C \int |f|^2 + \tilde{\e} \int \phi |u'|^2.
\end{equation}
Then since $\ti{\e}$ can be taken small enough to allow the $\phi|u'|^2$ on the right hand side to be absorbed into the left hand side
\begin{equation}
\int \phi |u'|^2 + \mu^2 \phi |u|^2 \leq C \left( \frac{\lambda^{\d_j}}{\mu^2} + \frac{1}{|\mu|}\right) \int |f|^2. 
\end{equation}
And so 
\begin{equation}
\int \frac{1}{\mu^2} |u'|^2 + \int |u|^2 \leq \frac{C}{\mu^2} \left( \frac{\lambda^{2\d}}{\mu^2}+1\right) \int |f|^2,
\end{equation}
which is exactly \eqref{eq:highenergy}.

\section{Sharpness of energy decay}\label{sharpnesssection}
Throughout this section we assume $\beta\in (-1,0)$. We will follow the strategy of \cite{kle19b} with some changes. We begin by constructing a sequence of solutions to a classical equation with a semiclassically small perturbation term. 
\begin{lemma}\label{4t1}
Fix $\beta\in (-1,0)$. There exists $h_0>0$ small such that for each $h\in (0,h_0)$ and $z\in\mathbb{C}$ with $\abs{z}\le \frac{1}{2} h^{-2}$, there exists $v_{h,z}\in H^1(0,\infty)$ such that 
\begin{equation}
Q_{h,z}v_{h,z}=\left(-\partial_y^2-i(y^\beta+h^{-\frac{2\beta}{2+\beta}})-h^{\frac{4}{2+\beta}}z\right)v_{h,z}=0,
\end{equation}
with $\partial_y v_{h,z}(0)=1$ and 
\begin{equation}
{C^{-1}}\le \|v_{h,z}\|_{H^1(0,\infty)}\le C h^{\frac{\beta}{2+\beta}}.
\end{equation}
Moreover $v_{h,z}(0)$ is bounded and analytic in $\{z\in \mathbb{C}: \abs{z}\le \frac{1}{2}h^{-2}\}$ for each fixed $h\in(0, h_0)$, and there exists $K>0$ such that
\begin{equation}
{C^{-1}h^{-\frac{2\beta}{2+\beta}}}\le\abs{v_{h,z}(0)}\le K.
\end{equation}
The constants are independent of $h$ and $\eta$.
\end{lemma}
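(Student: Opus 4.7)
The plan is to construct $v_{h,z}$ as the unique (up to scalar) $H^1$-at-infinity solution of $Q_{h,z}v=0$, normalized by $\partial_y v(0)=1$, via a semiclassical rescaling combined with a Green identity. First I would set $\tilde y = h^{2/(2+\beta)} y$ and $V(\tilde y) = v(y)$, converting $Q_{h,z} v = 0$ into the semiclassical equation
\begin{equation}
(-h^2 \partial_{\tilde y}^2 - i(\tilde y^\beta + 1) - h^2 z) V = 0
\end{equation}
on $(0,\infty)$. The complex potential $-i(\tilde y^\beta+1)$ has imaginary part uniformly negative, bounded at infinity, and with integrable singularity at the origin (since $\beta > -1$); the spectral perturbation $h^2 z$ lies in $\{|w|\le 1/2\}$ by hypothesis. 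This rescaling translates the Neumann condition into $V'(0) = h^{-2/(2+\beta)}$, and the norms are related by $\|v\|_{L^2}^2 = h^{-2/(2+\beta)}\|V\|_{L^2}^2$ and $\|\partial_y v\|_{L^2}^2 = h^{2/(2+\beta)}\|V'\|_{L^2}^2$.

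Next I would construct on $[1,\infty)$ the unique exponentially decaying solution $V_\infty$ via a Volterra integral equation centered at the reference exponential $e^{-k(\tilde y-1)/h}$, where $k = \sqrt{i+h^2 z}$ is taken with positive real part. A contraction in a weighted sup-norm gives $V_\infty(\tilde y) = e^{-k(\tilde y-1)/h}(1+o_h(1))$, analytic in $z$ throughout the disk. I would then extend $V_\infty$ backward to $[0,1]$ via the Cauchy problem; although $\tilde y^\beta$ is singular, its integrability gives a Carathéodory-type well-posedness in $C^1([0,1])$ with well-defined boundary traces $V_\infty(0)$ and $V'_\infty(0)$, depending analytically on $z$. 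Setting $v_{h,z}(y) := h^{-2/(2+\beta)} V_\infty(\tilde y)/V'_\infty(0)$ then produces a candidate with $\partial_y v_{h,z}(0) = 1$, provided $V'_\infty(0) \ne 0$.

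To rule out $V'_\infty(0)=0$ and to extract quantitative bounds I would use the Green identity: multiplying the equation by $\overline{V_\infty}$ and integrating on $(0,\infty)$, the boundary term at infinity vanishes by exponential decay, and the imaginary part of the resulting identity reads
\begin{equation}
h^{2} \cim[V'_\infty(0)\overline{V_\infty(0)}] = \int_0^\infty (\tilde y^\beta+1)|V_\infty|^2\,d\tilde y + h^{2}\,\cim z\cdot\|V_\infty\|_{L^2}^2,
\end{equation}
which is strictly positive when $\cim z \ge 0$ and extends to the full disk by analyticity of $V_\infty$ in $z$. The same identity, together with $\|V_\infty\|_{L^2}\sim h^{1/2}$ from the decay at scale $h$, and the scaling laws above, yields the claimed $H^1$ bounds on $v_{h,z}$ as well as the upper bound $|v_{h,z}(0)| \le K$.

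The main obstacle will be the sharp lower bound $|v_{h,z}(0)| \ge C^{-1} h^{-2\beta/(2+\beta)}$, since the naive WKB ansatz breaks down at $\tilde y=0$ where the potential is singular (the WKB condition $h p'/p^2\ll 1$ fails as $p(\tilde y)\sim \tilde y^{\beta/2}\to\infty$). To overcome this I would analyse the limiting equation $-V''=i h^{-2}\tilde y^\beta V$ near the origin using the Liouville-type substitution $u=\tilde y^{1+\beta/2}$, which converts it into a Bessel-type equation admitting explicit leading behavior, and then match these exact near-origin asymptotics to the Volterra-constructed $V_\infty$ on an overlap region $h^{2/(2+\beta)}\ll \tilde y\ll 1$. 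This matching is what produces the sharp powers of $h$ and uniformity in $z$ across the full disk.
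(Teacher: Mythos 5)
You take a genuinely different route from the paper, and the route has a concrete gap precisely at the step you flag as the ``main obstacle.'' The paper never leaves the energy space: it writes down the variational form $B(u,v)$ associated with $Q_{h,z}$ and a Neumann condition at $0$, checks an $h$-dependent coercivity constant $\gtrsim h^{-2\beta/(2+\beta)}$ on $H^1(0,\infty)$, obtains a Neumann resolvent $R_{h,z}$ by Lax--Milgram, proves a trace bound $\abs{R_{h,z}f(0)}\le C\|y^{-\beta/2}f\|$ by one Green pairing, and then defines $v_{h,z}=\chi-R_{h,z}Q_{h,z}\chi$ for a fixed compactly supported $\chi$ with $\chi(0)=0$, $\partial_y\chi(0)=1$. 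All the quantitative bounds then drop out: the $H^1$ sandwich follows from the two operator bounds on $R_{h,z}$, the upper bound $\abs{v_{h,z}(0)}\le K$ from the trace estimate applied to $Q_{h,z}\chi$ (this is why $\chi(0)=0$ is imposed), and the lower bound $\abs{v_{h,z}(0)}\ge C^{-1}h^{-2\beta/(2+\beta)}$ from a second Green pairing combined with $\|v_{h,z}\|_{H^1}\ge C^{-1}$. No scattering solution, no WKB, no near-origin matching.

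The gap in your plan: after you normalize $v_{h,z}=h^{-2/(2+\beta)}V_\infty(\tilde y)/V_\infty'(0)$, every bound in the lemma reduces to pinning down the \emph{ratio} $V_\infty(0)/V_\infty'(0)$ and the sizes $\|V_\infty\|_{L^2}/\abs{V_\infty'(0)}$, $\|V_\infty'\|_{L^2}/\abs{V_\infty'(0)}$ up to $h$-independent constants. With $V_\infty$ normalized near $\tilde y=1$ by the Volterra construction, those quantities are governed by the near-origin behavior where WKB fails: on $(0,1)$ the forward-decaying branch grows like $\exp\bigl(h^{-1}\!\int_{\tilde y}^1\!\re p\bigr)$ with $\re p=\re\sqrt{-i(\tilde y^\beta+1+h^2z)}\gtrsim 1$, so $V_\infty(0)$, $V_\infty'(0)$, and $\|V_\infty\|_{L^2}$ are all exponentially large in $1/h$. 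Consequently the claim $\|V_\infty\|_{L^2}\sim h^{1/2}$ cannot be correct with that normalization (you would need to renormalize at the origin first, which is circular), and the statement that ``the same identity \dots yields the $H^1$ bounds on $v_{h,z}$ as well as $\abs{v_{h,z}(0)}\le K$'' is not a derivation but a restatement of what the Bessel-type matching must produce. That matching is the entire content of the estimates and is only sketched. Separately, the assertion that the positivity of the imaginary part of your Green identity ``extends to the full disk by analyticity'' is not a valid argument --- positivity of a real quantity is not a holomorphic property --- but this one is easy to repair: since $h^2\abs{\cim z}\le \tfrac12$, one has $\int(\tilde y^\beta+1)\abs{V_\infty}^2+h^2\cim z\,\|V_\infty\|^2\ge \tfrac12\|V_\infty\|^2>0$ directly, for every $z$ in the disk.
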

\begin{proof}
1. We claim for those $h$ and $z$, $Q_{h,z}$ is an invertible operator from $H^1(0, \infty)$ to $H_0^{-1}(0, \infty)$, the dual space of $H^1(0, \infty)$. Consider the bounded bilinear form 
\begin{equation}
B(u,v)=\langle \partial_y u, \partial_y v\rangle-h^{\frac{4}{2+\beta}}z\langle u, v\rangle-i\left(\langle y^{\beta}u, v\rangle+h^{-\frac{2\beta}{2+\beta}}\langle u,v\rangle\right),
\end{equation}
on $H^1(0,\infty)$, as the variational formulation for $Q_{h,z}$ with Neumann conditions at $0$. For fixed $h>0$, the bilinear form is strongly coercive:
\begin{multline}
\abs{B(u,u)}^2=\left(\|\partial_y u\|^2-h^{\frac{4}{2+\beta}}\cre{z}\|u\|^2\right)^2+\left(\|y^{\frac{\beta}{2}} u\|^2+h^{-\frac{2\beta}{2+\beta}}\|u\|^2+h^{\frac{4}{2+\beta}}\cim{z}\|u\|^2\right)^2\\
\ge \frac{1}{2}\|\partial_y u\|^4-h^{\frac{8}{2+\beta}}\abs{z}^2\|u\|^2+\frac{1}{2}\|y^{\frac{\beta}{2}}u\|^4+\frac{1}{2}h^{-\frac{4\beta}{2+\beta}}\|u\|^2,
\end{multline}
where we used the algebraic inequality $(a-b)^2\ge \frac{1}{2}a^2-b^2$ for non-negative $a,b$. Note $h^{\frac{8}{2+\beta}}\abs{z}^2\le \frac{1}{4} h^{-\frac{4\beta}{2+\beta}}$ and we have
\begin{equation}
 \abs{B(u,u)}\ge \frac{1}{\sqrt{2}}\left(\frac{1}{\sqrt{2}}\|\partial_y u\|^4+\frac{1}{\sqrt{2}}\|y^{\frac{\beta}{2}}u\|^4+\frac{1}{2}h^{-\frac{2\beta}{2+\beta}}\|u\|^2\right)\ge \frac{1}{\sqrt{8}} h^{-\frac{2\beta}{2+\beta}}\|u\|^2_{H^1(0,\infty)},
\end{equation} 
the coercivity we need. By the Lax-Milgram theorem we know for any $f\in H_0^{-1}(0,\infty)$, there is a unique $v\in H^1(0,\infty)$ denoted by $v=R_{h, z}f$ such that $B(v,\cdot)=\langle f, \cdot\rangle$ as a functional on $H^1(0,\infty)$, that is, $Q_{h,z}v=f$ with $\partial_y v(0)=0$. We further note $R_{h,z}$ is a right inverse to $Q_{h,z}: H^1\rightarrow H^{-1}_0$ and
\begin{equation}\label{4l5}
C_1^{-1}\le \|R_{h,z}\|_{H_0^{-1}\rightarrow H^{1}}\le C h^{\frac{\beta}{2+\beta}},
\end{equation}
the upper and lower bounds are respectively from the Lax-Milgram theorem and the boundedness of $Q_{h,z}$. 

2. We claim that there exists $C>0$, such that for any $f\in y^{\frac{\beta}{2}}L^2(0, \infty)$ we have
\begin{equation}
\abs{R_{h,z}f(0)}\le C\|y^{-\frac{\beta}{2}}f\|. 
\end{equation}
Note that Lemma \ref{2t3} implies that $f\in H^{-1}$. Denote $v=R_{h,z}f$. Observe
\begin{equation}\label{4l1}
\langle f, v\rangle=B(v, v)=\|\partial_y v\|^2-h^{\frac{4}{2+\beta}}\cre{z}\|v\|^2-i\left(\|y^{\frac{\beta}{2}} v\|^2+h^{-\frac{2\beta}{2+\beta}}\|v\|^2+h^{\frac{4}{2+\beta}}\cim{z}\|v\|^2\right),
\end{equation}
the imaginary part of which implies
\begin{equation}
\|y^{\frac{\beta}{2}} v\|^2+h^{-\frac{2\beta}{2+\beta}}\|v\|^2+h^{\frac{4}{2+\beta}}\cim{z}\|v\|^2\le \abs{\langle f,v\rangle}\le C\epsilon^{-1}\|y^{-\frac{\beta}{2}}f\|^2+\epsilon \|y^{\frac{\beta}{2}}v\|^2. 
\end{equation}
Note that $\abs{z}\le \frac{1}{2}h^{-2}$ and absorb the last term on the right by the left, to obtain
\begin{equation}\label{4l2}
\|y^{\frac{\beta}{2}}v\|^2+h^{-\frac{2\beta}{2+\beta}}\|v\|^2\le C\epsilon^{-1}\|y^{-\frac{\beta}{2}}f\|^2. 
\end{equation}
Furthermore, since $y^{\frac{\beta}{2}}\ge 1$ on $(0,1]$, we have $\|v\|_{L^2(0,1)}^2\le C\|y^{-\frac{\beta}{2}}f\|$. The real part of \eqref{4l1} and \eqref{4l2} implies
\begin{equation}
\|\partial_y v\|^2\le C\|y^{-\frac{\beta}{2}}f\|^2+C\|y^{\frac{\beta}{2}}v\|^2+h^{\frac{4}{2+\beta}}\cre{z}\|v\|^2\le C\|y^{-\frac{\beta}{2}}f\|^2.
\end{equation}
Thus, by the trace theorem, $\abs{v(0)}\le \|v\|_{H^1(0,1)}\le C\|y^{-\frac{\beta}{2}}f\|$ as we need.

3. We claim $R_{h,z}f(0)$ is analytic in $z\in \{z\in \mathbb{C}: \abs{z}\le \frac{1}{2}h^{-2}\}$, at each fixed $h$. Fix $z_0\in\{\abs{z}\le \frac{1}{2}h^{-2}\}$. For any $z\in \{\abs{z-z_0}\le \|R_{h,z_0}\|^{-1}_{H^{-1}_{0}\rightarrow H^{1}}\}$, we have from the resolvent identity that
\begin{equation}
R_{h,z}=R_{h,z_0}\left(1-\left(z-z_0\right)R_{h,z_0}\right)^{-1}=\sum_{k=0}^{\infty}(z-z_0)^{k}R_{h,z_0}^{k+1},
\end{equation}
which converges in norm. Thus $R_{h,z}: H^{-1}_0\rightarrow H^1$ is analytic in $\{\abs{z}\le \frac{1}{2}h^{-2}\}$, and because the trace at $0$ is bounded by the $H^1$-norm, we have the desired analyticity. 

4. We now construct $v_{h,\eta}$ with specific Neumann data at 0. Fix a real $\chi\in C_c^{\infty}([0,\infty))$ with $\chi(0)=0$, $\partial_y\chi(0)=1$. We pick $\chi$ such that $\|y^\beta\chi\|\ge 2(C_1+1)\|\chi\|$, where $C_1$ is the bound in \eqref{4l5}. Then $Q_{h,z}\chi$ is smooth and compactly supported and
\begin{equation}
2C_1 \|\chi\|\le \|y^{\beta}\chi\|-h^{-\frac{2\beta}{2+\beta}}\|\chi\| \le \|Q_{h,z} \chi\|_{L^2}\le C\|\chi\|_{H^2}\le C_\chi.
\end{equation}
Now let $v_{h,z}=\chi-R_{h,z}Q_{h,z}\chi$. Immediately, we see $Q_{h,z} v_{h,z}=0$ and by the construction of $R_{h,z}$ in Step 1, $\p_y R_{h,z} Q_{h,z} \chi=0$ so $\p_y v_{h,z}(0)=1$. Now, note that since $R_{h,z}$ is bounded below as an operator
\begin{equation}
\hp{R_{h,z} Q_{h,z} \chi}{1} \geq \frac{1}{C_1} \ltwo{Q_{h,z} \chi} \geq 2 \nm{\chi}.
\end{equation}
And since $R_{h,z}$ is bounded above as an operator 
\begin{equation}
\hp{R_{h,z} Q_{h,z} \chi}{1} \leq C h^{\frac{\beta}{2+\beta}}. 
\end{equation}
Therefore
\begin{equation}
C \leq \hp{v_{h,z}}{1} \leq C h^{\frac{\beta}{2+\beta}}.
\end{equation}
Furthermore, note that $\nm{y^{-\frac{\beta}{2}} Q_{h,z} \chi} \leq C_{\chi}$, so by Step 2 and $\chi(0)=0, |v_{h,z}(0)| \leq K$.

5. It remains to show $\abs{v_{h,z}(0)}$ is bounded from below. Observe
\begin{multline}
0=\langle Q_{h,z} v_{h,z}, v_{h,z}\rangle=v_{h,z}(0) \partial_y v_{h,z}(0)+\|\partial_y v_{h,z}\|^2-h^{\frac{4}{2+\beta}}\cre{z}\|v_{h,z}\|^2\\
-i\left(\|y^{\frac{\beta}{2}} v_{h,z}\|^2+h^{-\frac{2\beta}{2+\beta}}\|v_{h,z}\|^2+h^{\frac{4}{2+\beta}}\cim{z}\|v_{h,z}\|^2\right),
\end{multline}
which after simplification becomes
\begin{equation}
\abs{v_{h,z}(0)}\ge C^{-1}\left(\|\partial_y v_{h,z}\|^2+\|y^{\frac{\beta}{2}} v_{h,z}\|^2+h^{-\frac{2\beta}{2+\beta}}\|v_{h,z}\|^2\right)\ge C^{-1}h^{-\frac{2\beta}{2+\beta}}\|v_{h,z}\|_{H^1}^2\ge C^{-1}h^{-\frac{2\beta}{2+\beta}},
\end{equation}
where we used $\partial_y v_{h,z}(0)=1$. 
\end{proof}
\begin{remark}
Note that the resolvent $R_{h,z}$ is not defined at $h=0$, which makes it different from the case of \cite[Lemma 4.3]{kle19b}. We use a different strategy below, rather than invoking the implicit function theorem at $h=0, z=0$, as $h\rightarrow 0$. 
\end{remark}
We now move to construct the solutions to a complex absorbing potential problem with a complex Coulomb potential. 
\begin{proposition}[Eigenmodes on the half-line]
Fix $\beta\in(-1,0)$ and let $V(x)=\mathbbm{1}_{x\ge 0}(x^{\beta}+1)$. There exist $h_0,C>0$ such that for all $h\in (0,h_0)$, there are $\eta_h\in\mathbb{C}$, $v_h\in H^1(-\frac{\pi}{2},\infty)$ with $v_h(-\frac{\pi}{2})=0$ and $\|v_h\|_{H^1(-\frac{\pi}{2},\infty)}\le C$ such that
\begin{equation}\label{4l3}
Q_h v_h(x)=\left(-h^2\partial_x^2-iV(x)-h^2\eta_h^2\right)v_h(x)=0.
\end{equation}
Furthermore
\begin{equation}
\abs{\eta_h-2}\le \frac{9K}{\pi} h^{\frac{2}{2+\beta}}, 
\end{equation}
where $K$ is the distinguished bound in Lemma \ref{4t1} and is independent of $h$.
\end{proposition}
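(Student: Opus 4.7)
The plan is to split the interval $(-\pi/2, \infty)$ at the singularity $x = 0$, solve the ODE explicitly on each side, match the two solutions $C^1$ at the origin, and reduce the matching to a transcendental equation for $\eta_h$ near $\eta = 2$ that I solve by Rouch\'e's theorem.

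On the left interval $(-\pi/2, 0)$ the potential vanishes, so $Q_h v = 0$ reduces to $v'' + \eta^2 v = 0$; the solution satisfying $v(-\pi/2) = 0$ is $v(x) = B \sin(\eta(x + \pi/2))$, giving $v(0^-) = B \sin(\pi \eta/2)$ and $v'(0^-) = B \eta \cos(\pi \eta/2)$. On the right interval $(0, \infty)$, the rescaling $y = x/h^{2/(2+\beta)}$ converts $Q_h v = 0$ into precisely the operator $Q_{h, \eta^2}$ of Lemma \ref{4t1}, so I take $v(x) = B'\, v_{h, \eta^2}(x/h^{2/(2+\beta)})$. Using $\partial_y v_{h, \eta^2}(0) = 1$, this yields $v(0^+) = B' v_{h, \eta^2}(0)$ and $v'(0^+) = B' h^{-2/(2+\beta)}$. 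The admissibility condition $|\eta^2| \leq h^{-2}/2$ of Lemma \ref{4t1} holds trivially for $\eta$ in a fixed neighborhood of $2$ once $h$ is small enough.

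Continuity of $v$ and $v'$ at $0$ yields a $2 \times 2$ linear system in $(B, B')$; a nontrivial kernel exists iff the determinant vanishes, which (after dividing by $\cos(\pi\eta/2)\, h^{-2/(2+\beta)}$) rewrites as
\begin{equation*}
\Phi_h(\eta) := \tan(\pi \eta/2) - \eta\, h^{2/(2+\beta)}\, v_{h, \eta^2}(0) = 0.
\end{equation*}
By Lemma \ref{4t1}, $z \mapsto v_{h, z}(0)$ is analytic on $\{|z| \leq h^{-2}/2\}$, so $\Phi_h$ is analytic in $\eta$ on a fixed neighborhood of $2$. The model $\tan(\pi \eta/2) = \tan(\pi(\eta-2)/2) = \frac{\pi}{2}(\eta - 2) + O((\eta - 2)^3)$ has a unique, simple zero at $\eta = 2$ in that neighborhood. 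On the circle $|\eta - 2| = \frac{9K}{\pi} h^{2/(2+\beta)}$ the model is bounded below by $\frac{9K}{2} h^{2/(2+\beta)} - O(h^{6/(2+\beta)})$, while the perturbation $|\eta\, h^{2/(2+\beta)}\, v_{h, \eta^2}(0)|$ is bounded above by $3K\, h^{2/(2+\beta)}$ using $|\eta| \leq 3$ and the uniform bound $|v_{h, z}(0)| \leq K$ from Lemma \ref{4t1}. Since $\frac{9}{2} > 3$, for $h$ sufficiently small the model strictly dominates the perturbation on this circle, so by Rouch\'e's theorem $\Phi_h$ has exactly one zero $\eta_h$ in the enclosed disk, with the advertised bound $|\eta_h - 2| \leq \frac{9K}{\pi} h^{2/(2+\beta)}$.

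To finish, I pick any nonzero element of the matching kernel, e.g.\ $B = 1$ and $B' = \sin(\pi \eta_h/2)/v_{h, \eta_h^2}(0)$ (well-defined since Lemma \ref{4t1} forces $v_{h, \eta_h^2}(0) \neq 0$), and define $v_h$ piecewise; by construction $v_h$ is $C^1$ across $x = 0$, solves $Q_h v_h = 0$ classically on each side (hence distributionally on the whole interval), lies in $H^2_{\mathrm{loc}}$, and satisfies $v_h(-\pi/2) = 0$. Rescaling $v_h$ by its (finite, nonzero) $H^1$ norm yields the stated uniform bound $\|v_h\|_{H^1} \leq C$. The main obstacle is entirely packed into the preceding Lemma \ref{4t1}: without the sharp uniform upper bound $K$ on $|v_{h, z}(0)|$ and the analytic dependence on $z$, neither the explicit constant $9K/\pi$ nor the Rouch\'e comparison would close.
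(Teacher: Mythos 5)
Your proof is correct and follows essentially the same route as the paper: split at $x=0$, solve explicitly by $\sin(\eta(x+\pi/2))$ on the left, rescale to $Q_{h,z}$ and invoke Lemma \ref{4t1} on the right, and reduce to the scalar matching equation $\tan(\pi\eta/2)=\eta\,h^{2/(2+\beta)}v_{h,\eta^2}(0)$. The one point of divergence is the final existence step: you apply Rouch\'e's theorem to $\Phi_h(\eta)=\tan(\pi\eta/2)-\eta\,h^{2/(2+\beta)}v_{h,\eta^2}(0)$ on the circle $|\eta-2|=\frac{9K}{\pi}h^{2/(2+\beta)}$, whereas the paper reparametrizes $\eta=2+h^{2/(2+\beta)}\mu$, forms the objective $F_h(\mu)$, and argues by contradiction via the minimum modulus principle. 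Both are standard and both hinge on exactly the same two ingredients from Lemma \ref{4t1} (the uniform bound $|v_{h,z}(0)|\le K$ and analyticity in $z$); the Rouch\'e version has the small advantage of giving uniqueness of the matching zero in the disk for free, which the paper does not claim but which costs nothing to record.
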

\begin{proof}
1. We first solve the equation \eqref{4l3} on $(0,\infty)$. By rescaling via $y=h^{-\frac{2}{2+\beta}}x$, the equation on $(0,\infty)$ is reduced to 
\begin{equation}
Q_h v_h=h^{\frac{2\beta}{2+\beta}}Q_{h,\eta}v_{h}(y)=h^{\frac{2\beta}{2+\beta}}\left(-\partial_y^2-i(y^\beta+h^{-\frac{2\beta}{2+\beta}})-h^{\frac{4}{2+\beta}}\eta^2\right)v_h(y)=0,
\end{equation}
on $y\in (0,\infty)$, where $Q_{h,\eta}$ is $Q_{h,z}$ in Lemma \ref{4t1} with $z=\eta_h^2$. 
Apply Lemma \ref{4t1} to see that for any $h\in (0,h_0)$, $\eta\in \{\eta\in\mathbb{C}:\abs{\eta}\le\frac{1}{2}h^{-2}\}$, there exists a sequence of $v_{h,\eta}\in H^1(0,\infty)$ that $Q_{h,\eta}v_{h,\eta}(y)=0$ with $v_{h,\eta}(0)$ is analytic in $\eta$ and
\begin{equation}
\abs{v_{h,\eta}(0)}\le K, \ \partial_y v_{h,\eta}(0)=1.
\end{equation}
Let $v^+_{h,\eta}(x)=v_{h,\eta}(h^{-\frac{2}{2+\beta}}x)\in H^1(0,\infty)$ denote such solutions in $x\in (0,\infty)$. Note that $Q_{h}v^{+}_{h,\eta}=0$ on $(0,\infty)$, $\|v_{h,\eta}^+\|_{H^1}\le C h^{\frac{-1+\beta}{2+\beta}}$ and $v^+_{h,\eta}(0)$ is analytic in $\eta$ and
\begin{equation}
\abs{v^+_{h,\eta}(0)}\le K, \ \partial_x v^+_{h,\eta}(0)=h^{-\frac{2}{2+\beta}}.
\end{equation}
This is our solution to \eqref{4l3} corresponding to parameters $h,\eta$ on the positive half real line $(0,\infty)$. 

2. We then solve the equation \eqref{4l3} on $(-\frac{\pi}{2},0)$. On $(-\frac{\pi}{2},0)$ the potential $W(x)\equiv 0$ and the equation \eqref{4l3} is solved by $v^-_{h,\eta}=\sin(\eta(x+\frac{\pi}{2}))$. We take note of the Cauchy data at $x=0$:
\begin{equation}
v^-_{h,\eta}(0)=\sin(\frac{\eta\pi}{2}), \ \partial_x v^-_{h,\eta}(0)=\eta \cos(\frac{\eta\pi}{2}).
\end{equation}

3. We now match $v^\pm_{h,\eta}$ at $0$. The transition condition is now
\begin{equation}
\frac{v^-_{h,\eta}(0)}{\partial_x v^-_{h,\eta}(0)}=\frac{v^+_{h,\eta}(0)}{\partial_x v^+_{h,\eta}(0)},
\end{equation}
which is
\begin{equation}\label{4l4}
\tan(\frac{\eta \pi}{2})=h^{\frac{2}{2+\beta}}\eta v_{h,\eta}^+(0).
\end{equation}
We will be looking for solutions $\eta=2+h^{\frac{2}{2+\beta}}\mu$ with parameter $\mu$ in $\{\mu\in\mathbb{C}:\abs{\mu}\le \frac{9K}{\pi}\}$. Let $v^+_{h,\mu}=v^+_{h,\eta(\mu)}$ be parametrised in $\mu$. Consider the objective function
\begin{equation}
F_h(\mu)=(2+h^{\frac{2}{2+\beta}}\mu)v^{+}_{h,\mu}(0)-h^{-\frac{2}{2+\beta}}\tan(\pi(2+h^{\frac{2}{2+\beta}}\mu)/2),
\end{equation}
the zeros of which solve \eqref{4l4}. We now assume $h_0$ is small such that $v^+_{h,\mu}(0)$ and $F_h(\mu)$ are analytic in $\{\abs{\mu}\le \frac{9K}{\pi}\}$, at each $h\in(0,h_0)$. Note
\begin{equation}
\tan(\pi(2+h^{\frac{2}{2+\beta}}\mu)/2)=\frac{1}2\pi h^{\frac{2}{2+\beta}}\mu+\bigo{(h^{\frac{6}{2+\beta}}\mu^3)},
\end{equation}
and since $v^+_{h,\mu}(0)$ and $\mu$ are bounded by $h$-independent constants, we have
\begin{equation}
F_h(\mu)=2v^+_{h,\mu}(0)-\frac12\pi\mu+\bigo(h^{\frac{2}{2+\beta}}\mu).
\end{equation}
Assume against contradiction that $F_h(\mu)$ does not have a zero in $\{\abs{\mu}\le \frac{9K}{\pi}\}$. Since $F_h(\mu)$ is analytic in the disk, by the minimum modulus principle, $\abs{F_h(\mu)}$ achieves its minimum over $\{\abs{\mu}\le \frac{9K}{\pi}\}$ on its boundary. Consider any $\abs{\mu}=\frac{9K}{\pi}$, we have when $h$ is small that
\begin{equation}
\abs{F_h(\mu)}\ge \frac{1}{2}\pi\abs{\mu}-2\abs{v^+_{h,\mu}(0)}+\bigo(h^{\frac{2}{2+\beta}}\abs{\mu})\ge \frac94 K. 
\end{equation}
However, $F_h(0)=v^+_{h,0}(0)$ and $\abs{F_h(0)}\le 2K \leq \frac{9}{4} K$, which is a contradiction. Thus for any $h\in (0,h_0)$, there exists $\mu_h$ in $\{\mu\in\mathbb{C}:\abs{\mu}\le \frac{9K}{\pi}\}$ such that $F_h(\mu_h)=0$. Then $\eta_h=2+h^{\frac{2}{2+\beta}}\mu_h$ satisfies the transition condition \eqref{4l4} at $0$, and $Q_h v_h=0$ on $(-\pi,\infty)$ with
\begin{equation}
v_h=v_{h,\eta_h}^--\beta_h v_{h,\eta_h}^+, \ \beta_h=2h^{\frac{2}{2+\beta}}+\mu_h h^{\frac{4}{2+\beta}}+\bigo(h^{\frac{6}{2+\beta}}), 
\end{equation}
and $\|v_h\|_{H^1}\sim C$ in $h\in (0, h_0)$. 
\end{proof}
\begin{remark}
Those eigenfunctions have most of its $H^1$-mass in $\{V=0\}$, and penetrates into $\{V>0\}$ with the Dirichlet data $v_h(0)=\bigo(h^{\frac{2}{2+\beta}})$ at the boundary between those two regimes. The eigenvalues are $h^{\frac{2}{2+\beta}}$-sized perturbation from the Dirichlet eigenvalues $2$ corresponding to those eigenfunctions that vanish at $-\frac{\pi}{2}$ and $0$. When $\beta\rightarrow 0^-$, this is consistent with the observation made by Nonnenmacher in \cite{aln14} in the limit case. 
\end{remark}
\begin{proposition}[Low-frequency quasimodes on $\mathbb{S}^1$]\label{4t2}
Parametrize $\mathbb{S}^1$ by $(-\pi,\pi)$ and let $W(x)=\mathbbm{1}_{\abs{x}\ge \frac{\pi}{2}}\left((\abs{x}-\frac{\pi}{2})^\beta+1\right)$. Then for any sequence $\{\lambda_k\}\subset\mathbb{R}$, $\lambda_k\rightarrow\infty$, there exists a sequence of $u_k\in H^1(\mathbb{S}^1)$, $\|u_k\|_{L^2}\equiv 1$ such that
\begin{equation}
\left(-\partial_x^2-i\lambda_k W(x)-2\right)u_k=\bigo_{L^2}(\lambda_k^{-\frac{1}{2+\beta}}). 
\end{equation}
\end{proposition}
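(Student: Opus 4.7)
The strategy I will follow is to transplant the half-line eigenfunction $v_{h_k}$ from the previous proposition onto $\mathbb{S}^1$. Setting $h_k := \lambda_k^{-1/2}$ and dividing the identity $Q_{h_k} v_{h_k} = 0$ by $h_k^2$, I obtain
\begin{equation*}
(-\partial_x^2 - i\lambda_k V - \eta_{h_k}^2) v_{h_k} = 0 \text{ on } (-\pi/2, \infty),
\end{equation*}
with $v_{h_k}(-\pi/2) = 0$, $\|v_{h_k}\|_{H^1}$ uniformly bounded above and below, and $|\eta_{h_k} - 2| = O(\lambda_k^{-1/(2+\beta)})$, so the spectral parameter $\eta_{h_k}^2$ is $O(\lambda_k^{-1/(2+\beta)})$-close to its limit. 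The plan is then to shift $v_{h_k}$ so that its absorbing potential $V$ aligns with the support of $W$, extend oddly across the centre of the undamped interval, and cut off exponentially small tails near $\pm\pi$ to obtain a periodic function with the desired error.

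First I will define $w_k(x) := v_{h_k}(x - \pi/2)$ on $(0, \pi)$. Since $V(y) = W(y + \pi/2)$ for $y \in [-\pi/2, \pi/2)$ and similarly on the damped region, the translate satisfies $(-\partial_x^2 - i\lambda_k W - \eta_{h_k}^2) w_k = 0$ on $(0, \pi)$ with $w_k(0) = 0$. Using that $W$ is even and $w_k(0) = 0$, I extend $w_k$ oddly to $(-\pi, 0)$ by $w_k(-x) := -w_k(x)$; the extension is then in $H^1_{\mathrm{loc}}((-\pi, \pi))$ and continues to solve the ODE across $x = 0$.

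To produce a genuine function on $\mathbb{S}^1$ I still need to enforce periodicity at $\pm\pi$. Here I will use that the complex absorbing potential satisfies $V \geq 1$ on $(0, \infty)$, which forces exponential decay of $v_{h_k}$: an energy/WKB argument applied to $Q_{h_k} v_{h_k} = 0$ shows that $|v_{h_k}(y)|$ and $|v_{h_k}'(y)|$ decay at rate $\gtrsim \lambda_k^{1/2}$ for $y$ bounded away from $0$, hence are smaller than any fixed negative power of $\lambda_k$ for $y \geq \pi/4$. I then take an even cutoff $\chi \in C^\infty(\mathbb{S}^1)$ with $\chi \equiv 1$ on $|x| \leq 3\pi/4$ and $\chi \equiv 0$ in a neighbourhood of $\pm\pi$, and set $u_k := \chi w_k / \|\chi w_k\|_{L^2}$. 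The $L^2$-mass of $w_k$ on the undamped interval $(0, \pi/2)$ is comparable to that of $\sin(\eta_{h_k} x)$ and therefore uniformly bounded below, so $u_k \in H^1(\mathbb{S}^1)$ is well-defined with $\|u_k\|_{L^2} = 1$.

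The quasimode error will then be
\begin{equation*}
(-\partial_x^2 - i\lambda_k W - 2) u_k = \frac{-[\partial_x^2, \chi] w_k + (\eta_{h_k}^2 - 2) \chi w_k}{\|\chi w_k\|_{L^2}},
\end{equation*}
where the commutator is super-polynomially small in $\lambda_k$ by the decay estimate on $v_{h_k}$ and its derivative, while the second term is $O_{L^2}(\lambda_k^{-1/(2+\beta)})$ by the bound on $|\eta_{h_k}^2 - \eta_\infty^2|$ inherited from the previous proposition (so $\eta_\infty^2$ plays the role of the constant in the statement). The main obstacle I expect is the quantitative exponential decay of $v_{h_k}$ on the absorbing region, needed to make the cutoff truly negligible; once this is in hand, the alignment via shift, odd reflection, and normalisation are essentially bookkeeping.
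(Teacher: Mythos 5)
Your plan is structurally the same as the paper's: shift the half-line eigenfunction $v_{h_k}$ from the previous proposition so that its complex absorbing potential aligns with $W$, reflect oddly through the centre of the undamped interval, cut off away from the boundary of the circle, normalise, and then account for two error sources (the commutator with the cutoff and the difference between $\eta_{h_k}^2$ and its limit). The transition condition and the lower bound on $\|\chi w_k\|_{L^2}$ are both handled correctly, and you correctly identify $\eta_{h_k}^2-\eta_\infty^2 = O(\lambda_k^{-\frac{1}{2+\beta}})$ as the term that sets the quasimode size; both this fact and the limit $\eta_\infty^2 = \lim(2+h_k^{\frac{2}{2+\beta}}\mu_{h_k})^2$ come straight from the previous proposition.

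The one genuine divergence from the paper is how the commutator term $[\partial_x^2,\chi]w_k$ is made negligible. You propose a WKB/Agmon-type exponential decay estimate: since the absorbing potential is bounded below on the damped region, the conjugated energy identity for $Q_h v_h = 0$ forces $|v_h|$ and $|v_h'|$ to decay at rate $\gtrsim h^{-1}$, so the commutator, which is supported well inside $\{V\ge 1\}$, is $O(h^\infty)$. This is plausible but, as you acknowledge, requires a real argument (a Carleman or Agmon weight plus an integration by parts controlling boundary terms, being mindful that $V$ is only singular near $0$, away from the support of $\chi'$). The paper instead packages the same fact through the semiclassical elliptic estimate of \cite[Theorem E.33]{dz19}: $[Q_h,\chi]$ has semiclassical wavefront set in $T^*((\pi/4,3\pi/8))$, where $|\sigma_h(Q_h)|=|\xi^2-iV|\ge 1$, so $\|[Q_h,\chi]v_h\|_{H^1_h}\le C\|Q_h v_h\|_{L^2}+Ch^N\|v_h\|_{H^{-N}_h}=O(h^N)$. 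Both routes deliver the same $O(h^\infty)$ bound; the elliptic estimate is simply a ready-made citation, whereas your Agmon argument would need to be carried out by hand. I would recommend invoking the elliptic estimate to avoid re-deriving what is essentially a known statement about solutions of semiclassically elliptic equations.

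One small observation, consistent with your remark that "$\eta_\infty^2$ plays the role of the constant in the statement": the constant in the paper's proposition and in the downstream choice $\lambda_k=(2+k^2)^{1/2}$ ought to be $\eta_\infty^2=4$, not $2$, since $\eta_h\to 2$. This does not affect the logic of either proof, but it is worth noting so that the matching $\lambda_k^2-k^2=\eta_\infty^2$ is consistent.
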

\begin{proof}
Consider a cutoff $\chi\in C^\infty[-\frac{\pi}{2},\frac{\pi}{2}]$ with $\chi\equiv 1$ on $[-\frac{\pi}{2},\frac{\pi}{4}]$, and $\chi\equiv 0$ on $[\frac{3\pi}{8},\frac{\pi}{2}]$. Note that $[Q_h, \chi]\in \Psi_h^1$ has semiclassical microsupport inside $T^*((\frac{\pi}{4},\frac{3\pi}{8}))$, on which $Q_h$ is semiclassically elliptic. For $(x,\xi)\in T^*((\frac{\pi}{4},\frac{3\pi}{8}))$
\begin{equation}
\abs{\sigma_h(Q_h)}=\abs{\xi^2-iV}\ge \abs{\xi}^2+1 \ge 1.
\end{equation}
Therefore, by the elliptic estimate in \cite[Theorem E.33]{dz19}, for any $N\in \mathbb{N}$, there exists $C>0$, dependent of $N$, such that for any $h\in (0,h_0)$
\begin{equation}
\|[Q_h, \chi]v_h\|_{H_h^1}\le C\|Q_hv_h\|_{L^2}+Ch^{N}\|v_h\|_{H^{-N}_h}\le Ch^{N}\|v_h\|_{L^2}\le Ch^{N},
\end{equation}
and hereby $\|Q_h \chi v_h\|_{L^2}=\bigo(h^{N})$. Let $h_k=\lambda_k^{-\frac{1}{2}}\rightarrow0$ and 
\begin{equation}
u_k(x)=\sgn(x)(\chi v_{h_k})(\abs{x}-\frac{\pi}{2})/\left\|\sgn(x)(\chi v_{h_k})(\abs{x}-\frac{\pi}{2})\right\|_{L^2(-\pi,\pi)}. 
\end{equation}
Then $u_k\in H^1(\mathbb{S}^1)$ are $L^2$-normalized, vanish near $\pm \pi$ and
\begin{equation}
\left(-\partial_x^2-i\lambda_k W(x)-(2+\mu_k)^2\right)u_k=\bigo_{L^2}(\lambda_k^{-\frac{N}{2}+1}),
\end{equation}
where $\mu_k=\mu_{h_k}=\bigo(\lambda_k^{-\frac{1}{2+\beta}})$. Pick $N$ large such that
\begin{equation}
\left(-\partial_x^2-i\lambda_k W(x)-2\right)u_k=\bigo_{L^2}(\lambda_k^{-\frac{1}{2+\beta}}). 
\end{equation}
So these $u_k$ are appropriate quasimodes. 
\end{proof}
\begin{proof}[Proof of Theorem \ref{sharpnessthm}]
In Proposition \ref{4t2}, we can pick a sequence $\lambda_k=(2+k^2)^{\frac{1}{2}}\rightarrow \infty$, 
\begin{equation}
w_k(x,y)=u_k(x)\sin(k y).
\end{equation}
Then
\begin{equation}
P_{\lambda_k}w_k=\left(-\partial_x^2-\partial_y^2 - i \lambda_k W(x) - \lambda_k^2\right)w_k=\bigo_{L^2_x}(\lambda_k^{-\frac{1}{2+\beta}})\sin(ky). =\bigo_{L^2(\mathbb{T}^2)}(\lambda_k^{-\frac{1}{2+\beta}}). 
\end{equation}
and this means that for any $\epsilon>0$, 
\begin{equation}
\|P_{\lambda_k}w_k\|_{L^2(\mathbb{T}^2)}= o(\lambda_k^{-\frac{1}{2+\beta}+\epsilon})\|w_k\|_{L^2(\mathbb{T}^2)}. 
\end{equation}
By Proposition \ref{polyprop}, the solution to the damped wave equation \eqref{DWE} cannot be stable at the rate of $\langle t\rangle^{-\frac{\beta+2}{\beta+3}-\epsilon}$ for any $\epsilon>0$. 
\end{proof}

\bibliographystyle{alpha}
\bibliography{mybib,Robib}

\end{document}